\pdfoutput=1
\documentclass[a4paper]{article}
\usepackage{geometry}
\usepackage{amsthm}
\usepackage[numbers]{natbib}
\usepackage{amsmath,amssymb,amsfonts}
\usepackage{algorithm}
\usepackage{algpseudocode}
\usepackage{booktabs}
\usepackage{subcaption}
\usepackage{enumitem}
\usepackage[european, straightvoltages, straightlabels]{circuitikz}
\usepackage{pgfplots}
\usepackage{pgfplotstable}
\pgfplotsset{compat=1.18}
\usepackage{graphicx}
\usepackage{bbm}
\usepackage{xcolor}
\usepackage{hyperref}

\theoremstyle{plain}
\newtheorem{theorem}{Theorem}[section]
\newtheorem{lemma}[theorem]{Lemma}
\newtheorem{proposition}[theorem]{Proposition}

\theoremstyle{definition}
\newtheorem{definition}[theorem]{Definition}
\newtheorem{assumption}[theorem]{Assumption}
\theoremstyle{remark}
\newtheorem{remark}[theorem]{Remark}

\newcommand{\diff}{\mathrm{d}}
\newcommand{\ddt}{\frac{\diff}{\diff t}}
\newcommand{\R}{\mathbbm{R}}

\newcommand{\rnn}{\mathbbm{R}^{n \times n}}
\newcommand{\gHam}{\mathcal{H}_\mathrm{DAE}}
\newcommand{\p}{\varphi}
\newcommand{\bI}{\mathcal{I}_i}
\newcommand{\kIi}{I_i}

\definecolor{darkgreen}{rgb}{0.1,0.7,0.1}
\graphicspath{{./figures/}}

\begin{document}

\title{Goal-Oriented Time Adaptivity for Linear
  Port-Hamiltonian Differential-Algebraic Equations of Index~1%
  \thanks{This work was funded by the Deutsche Forschungsgemeinschaft
    (DFG, German Research Foundation) -- Project-ID 531152215 -- CRC 1701.}}

\author{Aashutosh Sharma\thanks{Department of Applied and Computational Mathematics,
  University of Wuppertal, Gau{\ss}stra{\ss}e 20, 42119 Wuppertal, Germany.
  \texttt{asharma@uni-wuppertal.de}}
  \and
  Andreas Bartel\thanks{Department of Applied and Computational Mathematics,
  University of Wuppertal, Gau{\ss}stra{\ss}e 20, 42119 Wuppertal, Germany.
  \texttt{bartel@uni-wuppertal.de}}
  \and
  Manuel Schaller\thanks{Faculty of Mathematics, Chemnitz University of Technology,
  Stra{\ss}e der Nationen 62, 09111 Chemnitz, Germany.
  \texttt{manuel.schaller@math.tu-chemnitz.de}}}

\date{}

\maketitle

\begin{abstract}
Port-Hamiltonian systems provide a highly-structured framework for modeling
of physical systems. By definition, they encode a balance equation relating
energy changes to supplied and dissipated energy. Capturing this energy
balance in discrete approximations is a fundamental challenge and often has
been achieved by designing particular schemes such as discrete gradient
methods. In this work, we propose an approach that controls the energy
balance violation for port-Hamiltonian differential algebraic equations via
time adaptivity using a posteriori grid refinement techniques based on the
dual weighted residual method. In particular, we show how one may leverage
the port-Hamiltonian structure to efficiently compute the error estimators
using a dissipativity-exploiting block-Jacobi approximation. We illustrate
the efficacy of the method by means of simulations of electrical circuit
models.

\smallskip
\noindent\textbf{Keywords.} Port-Hamiltonian systems, adaptive time
discretization, differential algebraic equations, dual weighted residual
method.

\smallskip
\noindent\textbf{MSC 2020.} 65L80, 65L50, 65L05, 93C05.
\end{abstract}
\section{Introduction}\label{sec:intro}

Port-Hamiltonian (pH) systems provide a systematic framework for the
energy-based modeling of coupled systems including networks and multiphysics applications.   
By encoding energy
storage, dissipation, and interconnection directly on the structural
level, the pH formulation guarantees fundamental physical properties
such as passivity and energy conservation by
construction~\cite{VanDerSchaftJeltsema2014,jacob2012linear}.  Central to any pH model
is its \emph{energy balance}, which relates the rate of change of the
Hamiltonian to externally supplied and internally dissipated power.
For simulations, in particular over long time horizons, preserving or accurately controlling this
balance at the discrete level is essential.

Many applications, including electrical networks~\cite{Guenther2024}, constrained multi-body
systems~\cite{Berger2025}, and spatially discretized field theories~\cite{Clemens2024}, involve both dynamic
and static coupling relations and are naturally formulated as
(port-Hamiltonian) differential-algebraic equations
(DAEs)~\cite{BeattieMehrmannXu2018,mehrmann2023control}.  In the work at hand, we consider linear
port-Hamiltonian DAEs of the form
\begin{subequations}\label{eq:ph-dae}

\begin{align}
  \ddt E\, x(t) &= (J - R)\, Q\, x(t) + B\, u(t),
    \label{eq:ph-dae-state}\\
  y(t) &= B^\top Q\, x(t),
    \label{eq:ph-dae-output}
\end{align}
\end{subequations}
where $E \in \R^{n\times n}$ is a possibly singular, 
$J = -J^\top\in \R^{n\times n}$ encodes energy-conserving interconnection,
$0\le R =R^\top \in \R^{n\times n}$ models dissipation, $Q \in \R^{n\times n}$ defines the energy metric and satisfies 
$E^\top Q = Q^\top E$, and $B\in \R^{n\times m}$ is
the port matrix. In \eqref{eq:ph-dae}, $x(t)\in \R^{n}$ is the descriptor state, $u(t)\in \R^{m}$ denotes the input and $y(t)\in \R^{m}$ is the \emph{conjugated output}. 

From these properties, it is easily seen that the quadratic Hamiltonian 
$\gHam(x) = \tfrac12\, x^\top E^\top Q\, x$ satisfies
the \emph{power balance}
\begin{equation}\label{eq:power-balance}
  \ddt \gHam(x(t))
  = y(t)^\top u(t) - (Qx(t))^\top R\, Qx(t),
\end{equation}
which yields, upon integration over $[s,t]\subset \R$, the \emph{energy balance}
\begin{equation}\label{eq:energy-balance}
  \gHam(x(t)) - \gHam(x(s))
  = \int_s^t \!\bigl[y(\tau)^\top u(\tau)
    - (Qx(\tau))^\top R\, Qx(\tau)\bigr]\,\diff\tau.
\end{equation}

In time discretization of the dynamics \eqref{eq:ph-dae}, it is thus clearly desirable to capture this central property of the dynamics. 
There are various existing approaches to tackle this challenge.
Structure-preserving time integrators enforce a discrete analogue
of~\eqref{eq:energy-balance} exactly.  Discrete gradient methods,
originally introduced for Hamiltonian
ordinary differential equations (ODEs) in~\cite{Gonz96}, have been extended to port-Hamiltonian
ODEs~\cite{KotyLefe19, schulze2023structure} and, more recently, to
pH-DAEs~\cite{kinon2025discrete, mehrmann2019structure,
morandin2024modeling}.  Generally speaking, these methods enforce an energy balance by design and employment of particular integrators. 

In this work, we pursue a complementary approach using \emph{goal-oriented grid adaptivity} that ensures an \emph{approximate}
energy balance by adaptation of the time grid, with full freedom in the choice of the time integration method.
Rather than enforcing
energy conservation exactly, we treat violations
of~\eqref{eq:energy-balance} as a computable \emph{quantity of interest}

and adapt the time grid to reduce these violations to a prescribed tolerance. The theoretical basis is the 
\emph{dual weighted residual} 
method, which we summarize next. 

Classical a~posteriori error estimators for time-stepping schemes 
control a global norm of the discretization error, such as
$\|x - x_k\|_{L^2(0,T)}$, and refine wherever this norm is 
locally large. While such estimators are well-established and reliable, they do not distinguish between error components that 
affect a given quantity of interest and those that do not. In many 
applications, including the energy-balance monitoring considered  here, the relevant quantity depends on the solution in a highly 
nonuniform way, and global-norm control leads to unnecessary 
refinement in regions that contribute negligibly to the error in the 
quantity of interest.

The Dual Weighted Residual (DWR) method~\cite{BeckerRannacher2001, 
Rannacher2003} addresses this by formulating error estimation 
directly in terms of a user-specified quantity of interest
$\mathcal{J}$. The method proceeds in three conceptual steps.
First, one defines a \emph{residual} $\rho(x_k)$ that measures how 
well the discrete solution $x_k$ satisfies the continuous equation; 
for a consistent discretization, this residual vanishes when tested 
against discrete test functions and is nonzero only when tested 
against functions outside the discrete space.
Second, one introduces an \emph{adjoint problem} whose right-hand 
side is the derivative $\mathcal{J}'(x_k)$ of the quantity of 
interest. The adjoint solution $z$ encodes the sensitivity of 
$\mathcal{J}$ to perturbations at each point in time: it is large 
where local errors propagate strongly into the quantity of interest 
and small where they do not.
Third, one derives an \emph{error representation}
\begin{equation}\label{eq:DWR-schematic}
  \mathcal{J}(x) - \mathcal{J}(x_k) 
  \approx \sum_{i=1}^{N} \eta_i, \qquad
  \eta_i = \rho_i(x_k)(z - z_k),
\end{equation}
where $\rho_i$ denotes the restriction of the residual to the $i$-th 
time interval and $z_k$ is a discrete approximation of the adjoint. 
Each indicator $\eta_i$ is the product of the local residual and the 
local adjoint error, so it is large only when both the discretization 
error and the sensitivity are simultaneously significant. This 
multiplicative structure is the key advantage of goal-oriented 
estimation: it concentrates refinement precisely where it most 
efficiently reduces the error in~$\mathcal{J}$.

The identity~\eqref{eq:DWR-schematic} is made rigorous in 
Section~\ref{sec:DWR-analysis}, where we specialize it to the dG(0) 
discretization of the pH-DAE and derive the computable local 
indicators used in the adaptive algorithm.

For port-Hamiltonian ODEs, goal-oriented time adaptivity targeting 
the energy balance~\eqref{eq:energy-balance} was recently developed 
in~\cite{BartelSchaller_2025}. In the present work, we extend this 
approach to DAEs of index one, which introduces several additional 
challenges: the algebraic constraints couple differential and 
algebraic variables with different regularity; the Schur complement 
reduction to the differential variable produces a generally 
non-symmetric governing operator; and the adjoint problem features 
jump conditions at the time nodes arising from the quartic structure 
of the energy-balance QoI. For a comprehensive treatment of the DWR 
framework, we refer to~\cite{Rannacher2003, BeckerRannacher2001}; 
applications to parabolic optimal control problems are developed 
in~\cite{meidner2008}; and two-sided effectivity bounds are analyzed 
in~\cite{EndtmayerLangerWick2020}.

\paragraph{Contributions.} %
The main novelties of this work are:
\begin{enumerate}[label=(\roman*)]
\item Definition of a tailored goal-oriented time adaptive scheme for pH-DAEs using a 
Schur-complement
  elimination of algebraic variables.
\item A DWR-based a~posteriori error representation of the energy balance error and derivation of computable
  local error indicators.
\item A parallelizable Block-Jacobi approximation of the backward
  adjoint problem exploiting dissipativity of the pH structure.
\item Numerical validation demonstrating significant efficiency gains
  over uniform time stepping.
\end{enumerate}

The codes for all the experiments conducted in this paper can be found at 
\begin{equation*}
    \text{https://github.com/cacheFriendlySharma/DWR-Linear-pHDAE-Index-1
}
\end{equation*}

\paragraph{Outline.}
The organization of this work is as follows. Section~\ref{sec:weak-formulation} introduces the pH-DAE model and weak
formulation.
Section~\ref{sec:discretization} presents the temporal discretization
and a~priori analysis.
A~posteriori error estimation is derived in
Section~\ref{sec:DWR-analysis}.
Efficient implementation and adjoint computation are discussed in
Section~\ref{sec:efficient_adjoint} and the parallel adjoint approximation is proposed in
Section~\ref{sec:efficient_adjoint}. Numerical results are contained in
Section~\ref{sec:numerical_experiments}, followed by conclusions.


\section{Port-Hamiltonian DAEs in weak form}
\label{sec:weak-formulation}

In this section, we define the pH-DAEs of index-1 in a semi-explicit formulation. Then, a variational formulation is introduced for preparation of the goal-oriented scheme.
Subsequently, we define the energy-balance-based goal functional and derive the corresponding
adjoint problem.

\subsection{Problem formulation and semi-explicit structure}
\label{sec:formulation}

Throughout this work, we impose the following structural assumptions on
the pH-DAE~\eqref{eq:ph-dae}.

\begin{assumption}\label{ass:ph-structure}
The system matrices in~\eqref{eq:ph-dae} satisfy:
\begin{enumerate}[label=(\alph*)]
  \item $E \in \rnn$ is singular with $\mathrm{rk}(E) = r < n$.
  \item $Q \in \rnn$ is non-singular  and $E^\top Q = Q^\top E > 0$ (symmetric positive definite).
  \item $J = -J^\top$ (skew-symmetric) and
        $R = R^\top \ge 0$ (symmetric positive semi-definite).
  \item $B \in \R^{n \times m}$, and the input satisfies
        $u \in L^2(0,T;\R^m)$.
  \item The matrix pencil $(E,\, (J-R)Q)$ is regular , i.e., $\operatorname{det}(sE-(J-R)Q)$ is not the zero polynomial.

  \item The DAE~\eqref{eq:ph-dae} is of
\emph{index-$1$}, i.e., the mapping
$x \mapsto (J - R)Q\,x$ to the kernel of~$E$ is invertible.
 \item $x_0\in \mathbb{R}^n$ is a consistent initial value, i.e., there is a solution to 
 \eqref{eq:ph-dae} that satisfies $x(0)=x_0$.
\end{enumerate}
\end{assumption}

For more details on the port-Hamiltonian setting, we refer to~\cite{BeattieMehrmannXu2018}.

To make the index-1 structure more  
explicit, we employ a \emph{kernel splitting
pair}~\cite{Jansen_2015}. To this end, 
let $V \in \R^{n \times r}$ and
$W \in \R^{n \times (n-r)}$ be such that $EW = 0$, the matrix
$(V,\, W) \in \rnn$ is non-singular, and
\begin{equation*}
  x = V x_1 + W x_2.
\end{equation*}
Left-multiplying the pH-DAE~\eqref{eq:ph-dae-state} by
$(V,\,W)^\top Q^\top$ produces the semi-explicit form
\begin{equation}\label{eq:Semi-Explicit form}
  \ddt
  \begin{pmatrix} E_{11} & 0 \\ 0 & 0 \end{pmatrix}
  \begin{pmatrix} x_1(t) \\ x_2(t) \end{pmatrix}
  =
  \begin{pmatrix} A_{11} & A_{12} \\ A_{21} & A_{22} \end{pmatrix}
  \begin{pmatrix} x_1(t) \\ x_2(t) \end{pmatrix}
  +
  \begin{pmatrix} B_1 \\ B_2 \end{pmatrix} u(t),
\end{equation}
with block matrices
\begin{equation*}
  (V,W)^\top Q^\top E\,(V,W)
  = \begin{pmatrix} E_{11} & 0 \\ 0 & 0 \end{pmatrix},
  \quad
  A = (V,W)^\top Q^\top (J - R)\, Q\,(V,W),
  \quad
  \begin{pmatrix} B_1 \\ B_2 \end{pmatrix}
  = \begin{pmatrix} V^\top Q^\top B \\ W^\top Q^\top B \end{pmatrix}.
\end{equation*}
The pH structure is inherited: 
$E_{11}^\top = E_{11} > 0$ 
and the governing matrix is dissipative in the Euclidean inner product, that is,
$A + A^\top \le  0$.  The index-$1$ condition ensures
that $A_{22}$ is invertible, so the algebraic variable can be determined
from the differential variable and the input at every time instant. For a discussion of transformations of pH-DAEs with a more general structure 
we refer again to~\cite{BeattieMehrmannXu2018}. Linear-algebraic properties of pH-matrix pencils (such as the index) were analyzed in~\cite{mehl2018linear, mehl2022matrix, faulwasser2022optimal}.


\paragraph{Underlying primal ODE problem.}

With projected initial value $x_{1,0}\in\R^{r}$, \eqref{eq:Semi-Explicit form} yields:
\begin{subequations}\label{eq:semi-explicit-expanded}
\begin{align}
  E_{11}\, \ddt x_1(t)
    &= A_{11}\, x_1(t) + A_{12}\, x_2(t) + B_1\, u(t), \qquad x_1(0) = x_{1,0} 
    \label{eq:Semi-Explicit-First-Eq} \\
  0 &= A_{21}\, x_1(t) + A_{22}\, x_2(t) + B_2\, u(t)
    \label{eq:Semi-Explicit-Second-Eq}
\end{align}
\end{subequations}

Since $A_{22}$ is invertible, we solve~\eqref{eq:Semi-Explicit-Second-Eq}
for $x_2$ and define the \emph{algebraic reconstruction}
\begin{equation}\label{eq:algebraic-reconstruction}
  \hat x_2(x_1, t)
    := -A_{22}^{-1}\bigl(A_{21}\, x_1 + B_2\, u(t)\bigr).
\end{equation}
Substituting into~\eqref{eq:Semi-Explicit-First-Eq} yields the
\emph{reduced evolution equation}
\begin{equation}\label{eq:Reduced-System}
  E_{11}\, \ddt x_1(t) + S\, x_1(t) = F(t),
\end{equation}
with the Schur complement operator and reduced forcing
\begin{equation}\label{eq:S-and-F-def}
  S := -\bigl(A_{11} - A_{12}\, A_{22}^{-1}\, A_{21}\bigr),
  \qquad
  F(t) := \bigl(B_1 - A_{12}\, A_{22}^{-1}\, B_2\bigr)\, u(t).
\end{equation}
    The unique solution $x\in H^1(0,T;\R^r)$ that satisfies \eqref{eq:Reduced-System} pointwise almost everywhere and $x_1(0) = x_{1,0}\in \R^r$ is given by the variations of constants formula
    \begin{align}\label{eq:varconst}
        x_1(t) = e^{-E_{11}^{-1}S \, t}x_{1,0} + \int_0^t e^{-E_{11}^{-1}S (t-s)} E_{11}^{-1}F(s)\,\mathrm{d}s
    \end{align}
    for all $t\geq 0$. The \emph{reduced Hamiltonian} and output reads: 
\begin{equation}\label{eq:reduced-H}
  \mathcal H(x_1) := \tfrac{1}{2}\, x_1^\top E_{11}\, x_1,
  \qquad 
  \hat{y}_1 = B_1^\top x_1 + B_2^\top \hat{x}_2(x_1,t).
\end{equation}

The matrix $S$ \eqref{eq:S-and-F-def} is generally non-symmetric.  We denote its symmetric
part by
\begin{equation}\label{eq:Stilde-def}
  \widetilde S := \tfrac{1}{2}\bigl(S + S^\top\bigr).
\end{equation}

In the following, we will repeatedly use the following well-established scaling argument.

\begin{remark}[Scaling]\label{rem:scaling}
In general, $\widetilde S$ is only positive semi-definite due to the dissipativity of 
$A$. However, for various results it is convenient to assume that $\widetilde S$ is coercive. This can be achieved by the following scaling argument. Let $x \in H^1(0,T;\R^r)$ solve \eqref{eq:Reduced-System}, and define
\[
 x_\mu(t) = e^{-\mu t} x(t),
\qquad \mu \in \R.
\]
Then $ x_\mu$ solves
\begin{equation}
    E_{11} \frac{\mathrm d}{\mathrm dt} x_\mu(t) + (S+\mu E_{11}) x_\mu(t) = e^{-\mu t}F(t).
\end{equation}

Moreover, since 
$A$ is dissipative, the symmetric part \eqref{eq:Stilde-def}

satisfies $\widetilde S \ge 0$. Therefore,
\begin{equation*}
    \langle x,(S+\mu E_{11})x\rangle
    = \langle x,\widetilde S x\rangle + \mu \langle x,E_{11}x\rangle
    \ge \mu \lambda_{\min}(E_{11})\|x\|^2.
\end{equation*}
Hence, since $E_{11}>0$, the matrix $S+\mu E_{11}$ is coercive for every $\mu>0$. \hfill $\Box$
\end{remark}

\paragraph{Weak formulation.} 
Testing the reduced equation~\eqref{eq:Reduced-System} against
$\p \in L^2(0, T; \R^r)$ 

gives the
following \emph{variational} problem: Find $x_1 \in H^1(0, T; \R^r) \hookrightarrow C^0\bigl([0, T]; \R^r\bigr)$ such that 
\begin{equation}\label{eq:continuous-weak-form}
  \int_0^T \bigl\langle E_{11}\, \ddt x_1(t),\, \p(t) \bigr\rangle
    \diff t
  + \int_0^T \bigl\langle S\, x_1(t),\, \p(t) \bigr\rangle \diff t
  = \int_0^T \bigl\langle F(t),\, \p(t) \bigr\rangle \diff t
  \quad \text{ for all } \p \in L^2(0, T; \R^r).
\end{equation}

\begin{proposition}[Well-posedness and regularity]
\label{prop:existence}
The unique mild solution (given by \eqref{eq:varconst}) is also the unique weak solution of \eqref{eq:continuous-weak-form}. Moreover, the differential variable $x_1$ satisfies the estimate
\begin{equation}\label{eq:stability-continuous}
  \|x_1\|_{C([0,T];\R^r)}^2 + \|x_1\|_{L^2(0,T;\, \R^r)}^{\,2}
  \leq c\left(\|F\|_{L^2(0,T;\R^r)}^{\,2}
     + \|x_{1,0}\|_{E_{11}}^{\,2}\right),
\end{equation}
for some $c\geq 0$
and  
$\|v\|_{E_{11}}^{\,2} := \langle E_{11}\, v, v \rangle$. The algebraic component 
\eqref{eq:algebraic-reconstruction} satisfies
$\hat x_2 \in L^2(0,T;\R^{n-r})$.

If the dynamics~\eqref{eq:Reduced-System} are exponentially stable, that is, if $-E_{11}^{-1}S$ only has eigenvalues with strictly negative real part, then the constant $c\geq 0$ in \eqref{eq:stability-continuous} is uniform in the time horizon $T$.

\end{proposition}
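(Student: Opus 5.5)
We prove the three assertions in turn: the mild solution solves the weak problem and is its only solution, the energy estimate \eqref{eq:stability-continuous} holds, and $\hat x_2\in L^2$ follows. Throughout, $E_{11}>0$ is invertible.

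\textbf{Step 1 (mild $=$ weak, uniqueness).} Since $F\in L^2(0,T;\R^r)$ (because $u\in L^2$ and the map $u\mapsto F$ is a constant matrix), the variation of constants formula \eqref{eq:varconst} defines an absolutely continuous function. Differentiating under the integral (justified, e.g., by approximating $F$ by smooth functions in $L^2$) gives $x_1\in H^1(0,T;\R^r)$ with $E_{11}\dot x_1+Sx_1=F$ a.e. Multiplying by $\p\in L^2$ and integrating yields \eqref{eq:continuous-weak-form}. For uniqueness, let $w\in H^1$ be the difference of two weak solutions, so $w(0)=0$. Choosing $\p$ arbitrary in $L^2$ shows $E_{11}\dot w+Sw=0$ a.e. Hence $w$ is absolutely continuous and solves a linear homogeneous ODE with zero initial value, and Gronwall's lemma gives $w\equiv 0$. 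An alternative route to uniqueness is to test with $\p=w$ and proceed as in Step 2.

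\textbf{Step 2 (energy estimate).} Test with $\p=\mathbbm{1}_{[0,t]}x_1$ and use $\frac{\diff}{\diff t}\tfrac12\|x_1\|_{E_{11}}^2=\langle E_{11}\dot x_1,x_1\rangle$, which holds for $H^1$ functions by symmetry of $E_{11}$. With $\widetilde S\ge 0$ this gives
\begin{equation*}
\tfrac12\|x_1(t)\|_{E_{11}}^2+\int_0^t\langle \widetilde S x_1,x_1\rangle\,\diff s
=\tfrac12\|x_{1,0}\|_{E_{11}}^2+\int_0^t\langle F,x_1\rangle\,\diff s .
\end{equation*}
Drop the $\widetilde S$ term and apply Young's inequality, $\langle F,x_1\rangle\le \tfrac12\|F\|^2+\tfrac12\|x_1\|^2\le \tfrac12\|F\|^2+\tfrac{1}{2\lambda_{\min}(E_{11})}\|x_1\|_{E_{11}}^2$. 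Gronwall's lemma then bounds $\sup_t\|x_1(t)\|_{E_{11}}^2$ by $e^{cT}\bigl(\|x_{1,0}\|_{E_{11}}^2+\|F\|_{L^2}^2\bigr)$. Norm equivalence via $\lambda_{\min}(E_{11})$ gives the $C^0$ bound, and the $L^2$ bound follows from $\|x_1\|_{L^2}^2\le T\|x_1\|_{C}^2$. Alternatively, one can use the scaling of Remark~\ref{rem:scaling} to obtain coercivity and absorb the $F$ term directly; either way the constant depends on $T$.

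\textbf{Step 3 (algebraic component).} By \eqref{eq:algebraic-reconstruction}, $\hat x_2$ is a constant-matrix image of $x_1\in C^0\subset L^2$ plus a constant-matrix image of $u\in L^2$. Hence $\hat x_2\in L^2(0,T;\R^{n-r})$, with a bound inherited from Step 2 and $\|u\|_{L^2}$.

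\textbf{Step 4 (uniform constant under exponential stability).} This is the main obstacle, since the Gronwall argument of Step 2 necessarily produces growth in $T$. Instead we work directly with \eqref{eq:varconst}. Exponential stability gives $\|e^{-E_{11}^{-1}St}\|\le Me^{-\omega t}$ for some $M\ge 1$ and $\omega>0$. The homogeneous part is then bounded by $Me^{-\omega t}\|x_{1,0}\|$, and its square is integrable uniformly in $T$. For the convolution term, Cauchy--Schwarz gives
\begin{equation*}
\Bigl\|\int_0^t e^{-E_{11}^{-1}S(t-s)}E_{11}^{-1}F(s)\,\diff s\Bigr\|^2\le \frac{M^2}{2\omega}\|E_{11}^{-1}\|^2\|F\|_{L^2(0,T)}^2 ,
\end{equation*}
which yields the sup bound. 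Young's convolution inequality ($L^1*L^2\subset L^2$) gives the $L^2$ bound with constant $\bigl(M/\omega\bigr)^2\|E_{11}^{-1}\|^2$. All these constants are independent of $T$. One could alternatively use a Lyapunov matrix $P>0$ solving $(E_{11}^{-1}S)^\top P+PE_{11}^{-1}S>0$ to obtain a weighted energy identity, but the semigroup bound is more direct.
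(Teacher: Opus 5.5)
Your proposal is correct. Steps 1, 3 and 4 match the paper's terse proof:
\begin{itemize}
\item the mild solution is weak by construction;
\item uniqueness comes from the fundamental lemma plus uniqueness for the resulting linear ODE (like the paper, you tacitly include $x_1(0)=x_{1,0}$ in the weak problem, which \eqref{eq:continuous-weak-form} as displayed omits);
\item $\hat x_2\in L^2$ follows from the affine reconstruction;
\item $T$-uniformity comes from inserting $\|e^{-E_{11}^{-1}St}\|\le Me^{-\omega t}$ into \eqref{eq:varconst}.
\end{itemize}

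The genuine difference is Step 2. The paper reads the $T$-dependent bound off \eqref{eq:varconst} as well. You instead derive it from the energy identity obtained by testing with $\mathbbm{1}_{[0,t]}x_1$, followed by Gronwall. Your route uses only $E_{11}=E_{11}^\top>0$ and $\widetilde S\ge 0$, never the explicit semigroup. It therefore parallels the discrete argument of Lemma~\ref{lem:stability} and would carry over to settings without a closed-form solution. The paper's route has the economy of handling the general and the exponentially stable case with one formula. Your Gronwall constant $e^{T/\lambda_{\min}(E_{11})}$ is crude: bounding $\int_0^t\langle F,x_1\rangle$ by $\sqrt{t}\,\|F\|_{L^2}\sup_{s\le t}\|x_1(s)\|$ and absorbing gives only polynomial growth in $T$. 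The statement, however, asks only for some $c$, so this does not affect correctness.

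In Step 4 you are more complete than the paper. Its sketch only records pointwise bounds on the two terms of \eqref{eq:varconst}. These give a $T$-uniform $C^0$ bound but not, by themselves, a $T$-uniform $L^2(0,T)$ bound. Your use of the square integrability of $Me^{-\omega t}$, together with Young's convolution inequality $L^1*L^2\subset L^2$, is exactly what makes the $L^2$ part of \eqref{eq:stability-continuous} independent of $T$.
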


\begin{proof}
By the derivation of the weak formulation, the mild solution is also a weak solution.

Uniqueness of the weak solution follows from 
\eqref{eq:continuous-weak-form} and the fundamental theorem of calculus of variations. Estimate \eqref{eq:stability-continuous} directly follows from 
\eqref{eq:varconst}. The algebraic component satisfies
$\hat x_2(t) = -A_{22}^{-1}(A_{21}\, x_1(t) + B_2\, u(t))$, so
$\hat x_2 \in L^2(0,T;\R^{n-r})$ follows from
$x_1 \in L^2(0,T;\R^r)$ and $u \in L^2(0,T;\R^m)$. In case of exponential stability, i.e., $\|e^{-E_{11}^{-1}St }\|\leq Me^{-\omega t}$ for some $M\geq 1$ and 
$\omega >0$, it follows also from \eqref{eq:varconst} that all expressions on the right-hand side are uniformly bounded, i.e.,
the first term is bounded due to $\|e^{-E_{11}^{-1}St }\| \leq M$,  the second term by integration of an exponentially decaying function on $[0,\, T]$. 

\end{proof}

\color{black}

\subsection{Goal functional and adjoint problem}\label{sec:adjoint}

We define the quantity of interest (QoI) that is of particular interest in the simulation of port-Hamiltonian systems and in our suggested method, which drives the adaptive
time discretizations.

\paragraph{Energy-balance residual as QoI.}
The energy balance~\eqref{eq:energy-balance} holds exactly at the
continuous solution of the pH-DAE~\eqref{eq:ph-dae}.  For any
approximate solution $\tilde x\colon [0,T] \to \R^n$, we measure
the deviation from this balance on a given time grid $\mathcal{T}$
\begin{equation}\label{eq:timegrid}
\mathcal{T}: \quad    0 = t_0 < t_1 < \cdots < t_N = T 
\end{equation}
by the \emph{local
energy-balance residual}
\begin{equation}\label{eq:Iloc-i-full}
  I_{\mathrm{loc}}^i(\tilde x)
  := \biggl|\,
    \int_{t_i}^{t_{i+1}}
      \Bigl[-y(t)^\top u(t)
        + (Q\tilde x(t))^\top R\, Q\tilde x(t)\Bigr]\,\diff t
    + \gHam\bigl(\tilde x(t_{i+1})\bigr)
    - \gHam\bigl(\tilde x(t_i)\bigr)
    \biggr|^2,
\end{equation}
and the \emph{global quantity of interest}
\begin{equation}\label{eq:Iloc-global}
  I_{\mathrm{loc}}(\tilde x)
  := \sum_{i=0}^{N-1} I_{\mathrm{loc}}^i(\tilde x).
\end{equation}
By construction, $I_{\mathrm{loc}}(\tilde x) = 0$ if and only if the
energy balance~\eqref{eq:energy-balance} holds exactly on every
subinterval $[t_i, t_{i+1}]$.  The squaring
in~\eqref{eq:Iloc-i-full}
penalizes larger deviations of the energy
and it renders the
functional differentiable, which is required by the DWR framework.

\paragraph{Reduction to the differential variable.}
Since the algebraic variable is determined by the algebraic
reconstruction~\eqref{eq:algebraic-reconstruction}, the output $y$,
the dissipation, and the Hamiltonian can all be expressed as functions
of~$x_1$ alone.  Substituting $\hat x_2(x_1, t)$
from~\eqref{eq:algebraic-reconstruction} into~\eqref{eq:Iloc-i-full}
recasts each local residual~\eqref{eq:Iloc-i-full} in terms of the
differential variable only.  This motivates the following definitions.

\paragraph{Residual form.}
Let $X := H^1(0, T;\R^r)$ be the trial space for the primal problem.
For $x_1 \in X$ and $\p \in L^2(0,T;\R^r)$, we define the
\emph{residual form} induced by the affine linear bounded operator $\mathcal{A}:X\to L^2(0,T;\R^r)$ defined by
\begin{equation}\label{eq:residual-form}
  \mathcal{A}(x_1)(\p)
  := \int_0^T
    \bigl\langle E_{11}\,\dot x_1(t) + S\, x_1(t) - F(t),
      \;\p(t) \bigr\rangle \,\diff t.
\end{equation}
The primal problem~\eqref{eq:continuous-weak-form} reads equivalently:
find $x_1 \in X$ such that $\mathcal{A}(x_1)(\p) = 0$ for all
$\p \in L^2(0,T;\R^r)$.  Further, the Fr\'echet derivative with respect to $x_1$ in any direction
$v \in X$ is
\begin{equation}\label{eq:bilinear-form}
  \mathcal{A}'(x_1)(v, w)
  = \int_0^T
    \bigl\langle E_{11}\,\dot v(t) + S\, v(t),\; w(t) \bigr\rangle
    \,\diff t
\end{equation}
for all $w \in L^2(0,T;\R^r)$, and is independent of $x_1$.

\paragraph{Reduced goal functional.}
On the time grid $\mathcal{T}$, 
we define the
\emph{reduced goal functional} $\mathcal J : X \to \R$ 
\begin{equation}\label{eq:J-def}
  \mathcal J(x_1)
  := \sum_{i=0}^{N-1} \bigl|\mathcal G_i(x_1)\bigr|^2,
\end{equation}
where the \emph{local energy-balance residual} on $\bI = (t_i,t_{i+1})$
is
\begin{equation}\label{eq:Gi-splitting}
  \mathcal G_i(x_1)
  = \int_{t_i}^{t_{i+1}} g\bigl(t, x_1(t)\bigr)\,\diff t
  + \mathcal H\bigl(x_1(t_{i+1})\bigr)
  - \mathcal H\bigl(x_1(t_i)\bigr)
\end{equation}
 
with the \emph{power imbalance function}
$g$ encoding supply and dissipation terms after reduction, that is,
\begin{equation*}
  g(t, x_1)
  := -\hat y(x_1, t)^\top u(t) + \hat d(x_1, t) \quad
  \text{with} \quad 
  \hat d(x_1, t)
  := \bigl(Q(V x_1 + W \hat x_2)\bigr)^\top R\,
  \bigl(Q(V x_1 + W \hat x_2)\bigr).
\end{equation*}
using the reduced output $\hat{y}$ defined by \eqref{eq:reduced-H} and the \emph{reduced dissipation} $\hat{d}$ (obtained from the kernel splitting pair in $A$). 

Then, $\mathcal G_i$ is quadratic and $\mathcal J$ is quartic in $x_1$.

The reduced goal functional $\mathcal J(x_1)$ \eqref{eq:J-def} is the global QoI~\eqref{eq:Iloc-global} expressed in 
$x_1$, that is, $\mathcal J(x_1) = I_{\mathrm{loc}}(\tilde z)$ when
$\tilde z = V x_1 + W \hat x_2(x_1, \cdot)$.  We work exclusively with
$\mathcal J$ from this point onward, as the reduced formulation is the
natural setting for the variational formulation and error estimation.

\paragraph{Adjoint problem.}
For a direction $v \in X$, the G\^ateaux derivative of $\mathcal J$ \eqref{eq:J-def} is
\begin{equation}\label{eq:dJ-structure}
  \mathcal J'(x_1)(v)
  = 2 \sum_{i=1}^{N} \mathcal G_i(x_1)\, \mathcal G_i'(x_1)(v),
\end{equation}
where the local derivative, obtained by
differentiating~\eqref{eq:Gi-splitting}, reads
\begin{equation}\label{eq:dGi}
  \mathcal G_i'(x_1)(v)
  = \int_{t_{i-1}}^{t_{i}}
      \bigl\langle \nabla_{x_1} g(t, x_1),\, v(t) \bigr\rangle
      \,\diff t
  + \bigl\langle E_{11}\, x_1(t_{i}),\, v(t_{i}) \bigr\rangle
  - \bigl\langle E_{11}\, x_1(t_{i-1}),\, v(t_{i-1}) \bigr\rangle,
\end{equation}
where we have used that $\nabla_{x_1} \mathcal H(x_1) = E_{11}\, x_1$.
Because $\mathcal J'(x_1)(v)$ contains point evaluations, the adjoint
solution subject to this derivative as a source term will not be globally continuous.  Thus, we introduce the space of piecewise absolutely continuous functions
\begin{equation}\label{eq:broken-space}
  Z := \bigl\{\, z \in L^2(0,T;\R^r) \;\big|\;
    z|_{I_i} \in H^1(I_i;\R^r)
    \text{ for } i = 1,\dots,N \bigr\}.
\end{equation}
The \emph{adjoint} $z \in Z$ is defined as the
solution of
\begin{equation}\label{eq:adjoint-weak}
  \mathcal{A}'(x_1)(\p, z)
  = \mathcal J'(x_1)(\p)
  \quad \forall\, \p \in X.
\end{equation}
Note that existence and uniqueness of the adjoint follows from analogous considerations as in \cite[Section 3.1]{BartelSchaller_2025}. At this point, the symbol $z$ refers exclusively to the adjoint variable for the remainder of the paper; the descriptor state of the pH-DAE~\eqref{eq:ph-dae} has been eliminated in favor of the differential variable $x_1$ via the kernel splitting of Section~\ref{sec:formulation}.

\begin{remark}\label{rem:adjoint-at-exact}
At the exact primal solution $x_1$ to \eqref{eq:dJ-structure}, 
the energy balance violation vanishes, that is, $\mathcal G_i(x_1) = 0$, so the right-hand side
of~\eqref{eq:adjoint-weak} vanishes due to \eqref{eq:dJ-structure} and the adjoint is trivially
$z = 0$.  

\end{remark}

Next, we derive the strong form of the
adjoint to make the structure explicit. To this end, we introduce the jump notation for any function $f:[0,\, T]\to X$ and $t_i\in[0,\, T]$
\begin{equation}\label{eq:jumps}
      [f]_i = f(t_i^+) - f(t_i^-) . 
\end{equation}

\begin{lemma}[Strong form of the adjoint]\label{lem:adjoint-strong}
The unique solution $z \in Z$ of the adjoint
problem~\eqref{eq:adjoint-weak} satisfies the following backward-in-time
system:
\begin{subequations}\label{eq:adjoint-strong}
\begin{alignat}{2}
  -E_{11}\, \dot z(t) + S^\top z(t)
    &= \mathbf{f}_z(t),
    &\qquad& t \in \bI,
    \label{eq:dual-strong-ODE}\\
  E_{11}\, [z]_i
    &= \mathbf{j}_z(t_i),
    &\qquad& i = 1,\dots,N{-}1,
    \label{eq:dual-jumps} 
    \qquad 
  E_{11}\, z(T^-)
    = \mathbf{b}_z(T),
\end{alignat}
\end{subequations}

with the source terms
\begin{subequations}\label{eq:adjoint-sources}
\begin{align}
  \mathbf{f}_z(t)
    &:= 2\,\mathcal G_i(x_1)\,
        \nabla_{x_1} g\bigl(t, x_1(t)\bigr),
    \label{eq:fz-def}\\
  \mathbf{j}_z(t_i)
    &:= 2\bigl[\mathcal G_i(x_1)
         - \mathcal G_{i-1}(x_1)\bigr]\,
         E_{11}\, x_1(t_i),
    \label{eq:jz-def} 
    \qquad
  \mathbf{b}_z(T)
    := 2\,\mathcal G_{N-1}(x_1)\,
         E_{11}\, x_1(T).
\end{align}
\end{subequations}

\end{lemma}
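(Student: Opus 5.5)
We start from the weak adjoint equation \eqref{eq:adjoint-weak}, written out with \eqref{eq:bilinear-form} and \eqref{eq:dJ-structure}--\eqref{eq:dGi}, and integrate by parts interval by interval. Since $z\in Z$ is only piecewise $H^1$, we split $\int_0^T$ into $\sum_i\int_{\bI}$. On each $\bI$ the product $\langle E_{11}\dot\p, z\rangle$ can be integrated by parts because $\p\in X$ is globally $H^1$ and $z|_{\bI}\in H^1$. This gives
\begin{equation*}
  \mathcal A'(x_1)(\p,z) = \sum_i \int_{\bI}\bigl\langle \p,\, -E_{11}\dot z + S^\top z\bigr\rangle\,\diff t + \sum_i \Bigl[\langle \p(t_{i+1}), E_{11}z(t_{i+1}^-)\rangle - \langle \p(t_i), E_{11} z(t_i^+)\rangle\Bigr],
\end{equation*}
where we used $E_{11}=E_{11}^\top$. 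Because $\p$ is continuous, the boundary terms regroup node by node:
\begin{equation*}
  -\sum_{i=1}^{N-1}\langle \p(t_i), E_{11}[z]_i\rangle + \langle \p(T), E_{11}z(T^-)\rangle - \langle \p(0), E_{11}z(0^+)\rangle .
\end{equation*}

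On the right-hand side we reindex $\mathcal J'(x_1)(\p)$ so that the interval $\bI=(t_i,t_{i+1})$ carries the factor $\mathcal G_i$, consistent with \eqref{eq:Gi-splitting}. This removes the index shift between \eqref{eq:dJ-structure} and \eqref{eq:Gi-splitting}, and it gives a volume part $\sum_i\int_{\bI}\langle 2\mathcal G_i\nabla_{x_1}g,\p\rangle$. We then collect the point evaluations at each node. An interior node $t_i$ receives $+2\mathcal G_{i-1}\langle E_{11}x_1(t_i),\p(t_i)\rangle$ from interval $i-1$ and $-2\mathcal G_i\langle E_{11}x_1(t_i),\p(t_i)\rangle$ from interval $i$. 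The node $T$ receives $2\mathcal G_{N-1}\langle E_{11}x_1(T),\p(T)\rangle$, and the node $0$ receives $-2\mathcal G_0\langle E_{11}x_{1}(0),\p(0)\rangle$.

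Next we localize by choosing test functions. First take $\p\in C_c^\infty(\bI)$, which lies in $X$. All nodal terms vanish, so the fundamental lemma of calculus of variations yields \eqref{eq:dual-strong-ODE} with $\mathbf f_z$ as in \eqref{eq:fz-def} a.e. on each interval. Substituting this back cancels the volume terms for every $\p\in X$. What remains is an identity among finitely many nodal values $\p(t_i)$. These can be prescribed independently: take $\p$ piecewise linear hat functions times arbitrary vectors in $\R^r$, which lie in $X$. Matching coefficients at an interior node gives
\begin{equation*}
  -E_{11}[z]_i = 2(\mathcal G_{i-1}-\mathcal G_i)E_{11}x_1(t_i),
\end{equation*}
which is \eqref{eq:dual-jumps} with \eqref{eq:jz-def}. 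Matching at $t_N=T$ gives $E_{11}z(T^-)=\mathbf b_z(T)$. Uniqueness is taken from the cited existence/uniqueness statement. Conversely, reversing the computation shows that a solution of \eqref{eq:adjoint-strong} solves the weak problem.

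The main obstacle is the node $t_0=0$. Since $X$ imposes no initial condition on test functions, $\p(0)$ is free. The weak form then forces $E_{11}z(0^+)=2\mathcal G_0E_{11}x_1(0)$, i.e.\ a final-type condition at $0$ that is not listed in \eqref{eq:adjoint-strong}. Together with the terminal condition and the jumps, this would overdetermine the backward problem. I would resolve this by taking the test space with $\p(0)=0$, matching the prescribed initial value $x_1(0)=x_{1,0}$ of the primal problem: then the $t_0$ terms drop, and the statement follows exactly. I would make this convention explicit, along with the $\mathcal G_i$ indexing convention, since the sign bookkeeping of the jump terms depends on them.
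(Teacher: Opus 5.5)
Your proposal is correct and follows the paper's route: interval-wise integration by parts, regrouping of the nodal terms via continuity of $\varphi$, and matching of the distributed, interior-node and terminal contributions. Your $C_c^\infty$ and hat test functions just make the paper's ``compare separately'' step rigorous. Your remark on $t_0$ is also accurate: both expansions \eqref{eq:adjoint-LHS-expanded} and \eqref{eq:adjoint-RHS-expanded} contain $\varphi(0)$-terms that the paper's comparison silently drops, and for unrestricted $\varphi\in X$ these force the extra condition $E_{11}\,z(0^+)=2\,\mathcal G_0\,E_{11}\,x_1(0)$. So the listed system is still necessary, which is all the lemma asserts, but at a general linearization point solvability of the weak problem does need a convention such as your $\varphi(0)=0$.
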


\begin{proof}

Let $(\varphi,z)\in H^1(0,T;\R^r)\times Z$ solve \eqref{eq:adjoint-weak}
We apply integration by parts to the time derivative term in~\eqref{eq:bilinear-form} on each subinterval $\bI$ 
\begin{equation*} 
  \int_{t_i}^{t_{i+1}}
    \bigl\langle E_{11}\, \dot\p,\, z \bigr\rangle \,\diff t
  = \bigl\langle \p(t_{i+1}),\, E_{11}\, z(t_{i+1}^-) \bigr\rangle
  - \bigl\langle \p(t_i),\, E_{11}\, z(t_i^+) \bigr\rangle
  - \int_{t_i}^{t_{i+1}}
      \bigl\langle \p,\, E_{11}\, \dot z \bigr\rangle \,\diff t
\end{equation*}

using symmetry of $E_{11}$. Summing 
over $i = 0, \dots, N{-}1$ using that
$\p$ is continuous, we obtain
\begin{align*}
  \sum_{i=0}^{N-1}
  \int_{t_i}^{t_{i+1}}
    \bigl\langle E_{11}\, \dot\p,\, z \bigr\rangle \,\diff t
  &= \bigl\langle \p(T),\, E_{11}\, z(T^-) \bigr\rangle
   - \bigl\langle \p(0),\, E_{11}\, z(0^+) \bigr\rangle
  \notag\\
  &\quad
   - \sum_{i=1}^{N-1}
       \bigl\langle \p(t_i),\, E_{11}\, [z]_i \bigr\rangle
   - \sum_{i=0}^{N-1}
     \int_{t_i}^{t_{i+1}}
       \bigl\langle \p,\, E_{11}\, \dot z \bigr\rangle \,\diff t.
\end{align*}
Adding the $S$ term from~\eqref{eq:bilinear-form} 
and using
$\langle S\, \p, z \rangle = \langle \p, S^\top z \rangle$, the
left-hand side of 
\eqref{eq:adjoint-weak} becomes
\begin{align}\label{eq:adjoint-LHS-expanded}
  \mathcal{A}'(x_1)(\p, z)
  &= \sum_{i=0}^{N-1}
     \int_{t_i}^{t_{i+1}}
       \bigl\langle \p,\,
         -E_{11}\, \dot z + S^\top z \bigr\rangle \,\diff t
  \notag\\
  &\quad
   + \bigl\langle \p(T),\, E_{11}\, z(T^-) \bigr\rangle
   - \bigl\langle \p(0),\, E_{11}\, z(0^+) \bigr\rangle
   - \sum_{i=1}^{N-1}
       \bigl\langle \p(t_i),\, E_{11}\, [z]_i \bigr\rangle.
\end{align}

Next, we expand the right-hand side of the adjoint equation
 $\mathcal J'(x_1)(\p)$ using~\eqref{eq:dJ-structure}
 and~\eqref{eq:dGi}:
\begin{align}\label{eq:adjoint-RHS-expanded}
  \mathcal J'(x_1)(\p)
  &= \sum_{i=0}^{N-1}
     2\, \mathcal G_i
     \int_{t_i}^{t_{i+1}}
       \bigl\langle \nabla_{x_1} g,\, \p \bigr\rangle \,\diff t
+ \sum_{i=1}^{N-1}
       2\bigl(\mathcal G_{i-1} - \mathcal G_i\bigr)
       \bigl\langle E_{11}\, x_1(t_i),\, \p(t_i) \bigr\rangle
  \notag\\
  &\quad
   + 2\, \mathcal G_{N-1}
       \bigl\langle E_{11}\, x_1(T),\, \p(T) \bigr\rangle
   - 2\, \mathcal G_0
       \bigl\langle E_{11}\, x_1(0),\, \p(0) \bigr\rangle.
\end{align}

Equating~\eqref{eq:adjoint-LHS-expanded}
and~\eqref{eq:adjoint-RHS-expanded} for all $\p \in X$ and comparing
the distributed terms, interior-node terms, and boundary terms
separately yields the strong

form~\eqref{eq:dual-strong-ODE}--\eqref{eq:dual-jumps} with
the sources~\eqref{eq:adjoint-sources}.  

\end{proof}

\begin{remark}[Interpretation of the adjoint problem]%
\label{rem:adjoint-interpretation}
The adjoint problem~\eqref{eq:adjoint-strong} is a backward-in-time ODE
with piecewise smooth forcing $\mathbf{f}_z$ on each subinterval.  
At each interior node $t_i$, the adjoint experiences 

a jump proportional to $\mathcal G_i - \mathcal G_{i-1}$, that is, it is only an ODE in the usual sense on each subinterval.

This reflects the mismatch in the energy-balance residual
between adjacent subintervals. 

\end{remark}

\subsection{Norm-augmented weighted quantity of interest}
\label{sec:norm-augmented-QoI-definition}
As alternative to the purely energy-targeted functional $I_{\mathrm{loc}}(\tilde x)$ defined in \eqref{eq:Iloc-global}, we introduce a norm-augmented QoI that balances accuracy in the
discrete energy balance with accuracy of the state trajectory.
Using the underlying Hamiltonian $\mathcal{H}$ and a scalar parameter $\rho\ge 0$, we define
\begin{equation}
  \label{eq:weighted_qoi_norm}
  I_{\mathrm{loc},\rho}(\tilde x)
  :=
  I_{\mathrm{loc}}(\tilde x)
  +
  \rho \int_0^T \mathcal{H}\bigl(\tilde x(t)\bigr)\,\mathrm{d}t.
\end{equation}
The additional term corresponds to an $L^2$-in-time energy functional
and induces the energy norm in the differential variables (that is, $x_1$ after transformation)
\begin{equation*}
  \|\tilde x\|_{\mathcal{E}}^2 := \int_0^T \mathcal{H}\bigl(\tilde x(t)\bigr)\,\mathrm{d}t.
\end{equation*}
The 
dual problem is constructed 
with this modified QoI. 
This choice of the QoI \eqref{eq:weighted_qoi_norm} yields a smooth quadratic contribution to the goal functional and leads to a well-defined adjoint equation.

\section{Discretization}\label{sec:discretization}
We discretize the reduced primal problem~\eqref{eq:Reduced-System} and
the adjoint problem~\eqref{eq:adjoint-strong} using a discontinuous
Galerkin method of order zero in time, denoted by dG(0). 

\subsection{Discretization of the primal problem}
\label{sec:dg0-primal}

For the time grid $\mathcal{T}$ \eqref{eq:timegrid}, we introduce intervals $\kIi = (t_i, t_{i+1}]$, step sizes $k_i := t_{i+1} - t_i$ 
and 
the discrete trial and test space of piecewise constant functions 

\begin{equation}\label{eq:discrete-space}
  X_k := \bigl\{\, v_k \in L^2(0,T;\R^r) \;\big|\;
    v_k|_{\kIi} = v^i \in \R^r,\;
    i = 0,\dots,N{-}1 \bigr\}.
\end{equation}
For $v_k \in X_k$, we write $v_k^i := v_k|_{\kIi}$ for its constant
value on~$\kIi$.  

For
the initial condition, we set $v_k(t_0^-) := x_{1,0}$.

Testing the reduced equation~\eqref{eq:Reduced-System} with $\p_k \in X_k$ on each subinterval $\kIi$ and
applying integration by parts to the time-derivative term produces the
standard dG(0) formulation 
(see,
e.g.,~\cite[Chapter~12]{thomee2006}).  Since $\p_k$ and $x_k$ are both
constant on each~$\kIi$, the scheme reduces to the following 
recursion.

\begin{definition}[Primal dG(0) scheme]\label{def:primal-scheme}
Find $x_k \in X_k$ such that, for $i = 0, \dots, N$,
\begin{equation}\label{eq:primal-step}
  \bigl(E_{11} + k_i\, S\bigr)\, x_k^i
  = E_{11}\, x_k^{i-1}
    + \int_{\bI} F(t)\,\diff t,
\end{equation}
where $x_k^{0} := x_{1,0}$ denotes the initial condition.
\end{definition}

The following auxiliary result provides a convenient estimate for the following. It allows to deduce estimates for the discrete approximation of the original system by estimates from its counterpart for the scaled system subject to coercive governing matrix in the spirit of Remark~\ref{rem:scaling}.
\begin{lemma}\label{lem:scaledtounscaled}
    Given $\mu \ge 0$. Assume that $(x_k),(x_\mu)_k\in X_k$ satisfy
    \begin{align}\label{eq:discret-recursion-unshift-shift}
        \bigl(E_{11} + k_i\, S\bigr)\, x_k^i
  = E_{11}\, x_k^{i-1}
    + \!\!\int_{\bI}\!\! F(t)\,\diff t
,\quad \bigl(E_{11} + k_i\, (S+\mu E_{11})\bigr) (x_\mu)_k^i
  = E_{11} (x_\mu)_k^{i-1}
    + \!\!\int_{\bI}\!\! F(t)\,\diff t
    \end{align}
for $i=1,\ldots,N$, with the same initial value $x_k^0 = (x_\mu)_k^0 = x_{1,0}$. Then there is a constant $c(\mu,\, T) \geq 0$ with $c(\mu,\,T) \stackrel{\mu \to 0} \to 1$ such that
\begin{align}
\begin{split}
   \|x_k^N\|^2 &+ 
   \sum_{i=1}^{N-1} \|x_k^{i+1}-x_k^i\|^2 + \sum_{i=1}^{N} k_i \|x_k^i\|^2 
   \\&\leq c(\mu,\,T) \left(\|(x_\mu)_k^N\|^2 + 
   \sum_{i=1}^{N-1} \|(x_\mu)_k^{i+1}-(x_\mu)_k^i\|^2 + \sum_{i=1}^{N} k_i \|(x_\mu)_k^i\|^2\right). 
   \label{eq:integralestimate}
\end{split}
\end{align}
\end{lemma}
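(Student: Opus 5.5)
The plan is a perturbation argument rather than an exact discrete rescaling. An exact rescaling fails for dG(0): if one sets $x_k^i = r_i\, y^i$ with $r_i=\prod_{j\le i}(1+\mu k_j)$, an extra term $\mu k_i^2 S$ appears. I would instead consider the difference $e_k := x_k - (x_\mu)_k \in X_k$. Subtracting the two recursions in \eqref{eq:discret-recursion-unshift-shift} gives
\begin{equation*}
  \bigl(E_{11} + k_i S\bigr)\, e_k^i = E_{11}\, e_k^{i-1} + \mu k_i\, E_{11} (x_\mu)_k^i, \qquad e_k^0 = 0 .
\end{equation*}
This is again a dG(0) step for the unshifted operator, now driven by a source that is $\mathcal O(\mu)$ times the shifted solution. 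The target is a bound for $e_k$ in the three quantities of \eqref{eq:integralestimate} of the form $C(T)\,\mu$ times the right-hand bracket. Once that is available, the triangle inequality with Young's weight, $\|a+b\|^2\le(1+\varepsilon)\|a\|^2+(1+\varepsilon^{-1})\|b\|^2$, gives the constant $c(\mu,T)=(1+\varepsilon)+(1+\varepsilon^{-1})C(T)\mu$. Choosing $\varepsilon=\sqrt{\mu}$ yields $c(\mu,T)\to1$ as $\mu\to0$.

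The key step is a discrete energy estimate for the error recursion. Testing with $e_k^i$ and using $\langle S v,v\rangle=\langle \widetilde S v,v\rangle\ge0$ from Remark~\ref{rem:scaling} gives $\langle (E_{11}+k_iS)e_k^i,e_k^i\rangle\ge\|e_k^i\|_{E_{11}}^2$. Combining this with the polarization identity $2\langle E_{11}(a-b),a\rangle=\|a\|_{E_{11}}^2-\|b\|_{E_{11}}^2+\|a-b\|_{E_{11}}^2$, I obtain
\begin{equation*}
  \|e_k^i\|_{E_{11}}^2-\|e_k^{i-1}\|_{E_{11}}^2+\|e_k^i-e_k^{i-1}\|_{E_{11}}^2
  \le 2\mu k_i\langle E_{11}(x_\mu)_k^i,e_k^i\rangle
  \le \mu k_i\bigl(\|(x_\mu)_k^i\|_{E_{11}}^2+\|e_k^i\|_{E_{11}}^2\bigr).
\end{equation*}
Summing and applying a discrete Gronwall lemma gives two bounds: first $\max_i\|e_k^i\|_{E_{11}}^2 \le \mu\, e^{2\mu T}\sum_i k_i\|(x_\mu)_k^i\|_{E_{11}}^2$, valid for $\mu k_i<1$ (which is harmless since $\mu\to0$), and then the same type of bound for $\sum_i\|e_k^i-e_k^{i-1}\|_{E_{11}}^2$. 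The $L^2$-type term is controlled by $\sum_i k_i\|e_k^i\|^2\le T\max_i\|e_k^i\|^2$. Finally, norm equivalence between $\|\cdot\|$ and $\|\cdot\|_{E_{11}}$ contributes the factor $\lambda_{\max}(E_{11})/\lambda_{\min}(E_{11})$. This factor multiplies $\mu$, so it does not spoil the limit $c(\mu,T)\to1$.

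The main obstacle is bookkeeping rather than any single inequality. The left-hand side of \eqref{eq:integralestimate} contains the increments $x_k^{i+1}-x_k^i$, and these must be split as $(x_\mu)_k^{i+1}-(x_\mu)_k^i$ plus the corresponding increment of $e_k$. The increments of $e_k$ are bounded by the Gronwall step above, but the source term there must be absorbed without the constant blowing up. I would handle this by keeping the increment term of $e_k$ on the left in the summed energy inequality, and by using Gronwall only for $\max_i\|e_k^i\|$. A second subtle point: the $\mathcal O(\mu)$ bound for $e_k$ requires $\sum_i k_i\|(x_\mu)_k^i\|^2$, which is already part of the right-hand side of \eqref{eq:integralestimate}, so no additional data term is needed. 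The $T$-dependence of $c(\mu,T)$ enters only through $Te^{2\mu T}$.
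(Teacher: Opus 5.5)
Your argument is correct in substance, but it takes a different route from the paper's. Both proofs use the same difference $e_k=x_k-(x_\mu)_k$ and the same recursion $(E_{11}+k_iS)e_k^i=E_{11}e_k^{i-1}+\mu k_iE_{11}(x_\mu)_k^i$ with $e_k^0=0$. The paper then solves this recursion explicitly for constant steps, $e^i=k\mu\sum_{j\le i}P^{i-j+1}(x_\mu)_k^j$ with $P=(E_{11}+kS)^{-1}E_{11}$. It bounds the terminal value and the weighted $L^2$ sum via $\|P\|\le1$ and Cauchy--Schwarz. For the increments it uses a second recursion $d^i=Pd^{i-1}+k\mu Pg^i$ for $d^i=e^{i+1}-e^i$, so the increments $g^i$ of $(x_\mu)_k$ appear on the right. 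The constant then follows with $\varepsilon=\mu T$.

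You instead test with $e_k^i$ and use the discrete energy identity, which has three advantages:
\begin{enumerate}
\item Everything lives in the $E_{11}$-norm, which is where the contraction $\|Pv\|_{E_{11}}\le\|v\|_{E_{11}}$ actually holds (cf.\ the proof of Proposition~\ref{prop:contraction}). In the Euclidean norm $\|P\|\le1$ can fail, so the paper's bound tacitly needs the factor $\lambda_{\max}(E_{11})/\lambda_{\min}(E_{11})$. You make this factor explicit, and it harmlessly multiplies $\mu$.
\item The increments of $e_k$ come for free from the numerical-dissipation term, so only $\sum_ik_i\|(x_\mu)_k^i\|^2$ is needed on the right.
\item Variable step sizes cost nothing. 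The paper's $d^i$-recursion relies on a fixed $P$ and does not take that form when $P_i$ depends on $k_i$.
\end{enumerate}

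The one weak point is the Gronwall step. Absorbing the implicit term $\mu k_i\|e_k^i\|_{E_{11}}^2$ produces the factor $\prod_i(1-\mu k_i)^{-1}$. This is bounded by $e^{2\mu T}$ only under a condition such as $\mu k_i\le\tfrac12$, not for all $\mu k_i<1$. As written, you therefore obtain a grid-independent constant only for $\mu T\le\tfrac12$. This is not entirely ``harmless'': the lemma asserts a constant for every $\mu\ge0$, and Lemma~\ref{lem:stability} invokes it with arbitrary $\mu$.

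The fix is to avoid Gronwall altogether. Estimate $2\mu k_i\langle E_{11}(x_\mu)_k^i,e_k^i\rangle\le2\mu k_i\|(x_\mu)_k^i\|_{E_{11}}\max_j\|e_k^j\|_{E_{11}}$, sum, and maximize over the final index on the left. This gives $\max_j\|e_k^j\|_{E_{11}}\le2\mu\sum_ik_i\|(x_\mu)_k^i\|_{E_{11}}\le2\mu\sqrt T\,(\sum_ik_i\|(x_\mu)_k^i\|_{E_{11}}^2)^{1/2}$, and then $\sum_i\|e_k^i-e_k^{i-1}\|_{E_{11}}^2\le4\mu^2T\sum_ik_i\|(x_\mu)_k^i\|_{E_{11}}^2$. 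Both bounds hold for all $\mu\ge0$ and all grids, and are even $O(\mu^2)$ rather than $O(\mu)$. Your Young-inequality assembly with $\varepsilon=\sqrt\mu$ then goes through unchanged.
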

\begin{proof}
    The proof is provided in Appendix~\ref{sec:app}.
\end{proof}

\begin{remark}
\label{rem:primal-wellposed}
The system matrix $E_{11} + k_i\, S$ is generally non-symmetric.

Its symmetric part is
$E_{11} + k_i\, \widetilde S$ is positive definite as $S$ is dissipative, hence has positive semi-definite symmetric part. As $E_{11}$ is symmetric positive definite, $v^\top(E_{11} + k_i S)\,v \geq v^\top E_{11} v > 0$ for all $v \neq 0$, so the
time stepping equation is uniquely solvable at each step and for any $k_i\geq 0$. 
\end{remark}


Note that $(x_\mu)_k$ is \emph{not} the dG(0) discretization of the 
scaled continuous variable from Remark~\ref{rem:scaling}, but rather an 
auxiliary discrete sequence defined by the shifted recursion with the 
original source~$F$.
We briefly recall stability of the dG(0) scheme for our particular setting. We note that this feature is well-known as dG(0)-in-time discretization corresponds to the implicit Euler method, see~\eqref{eq:primal-step} and~\cite{meidner2008}.
\begin{lemma}[Primal stability]\label{lem:stability}
Let Assumption~\ref{ass:ph-structure} hold.
Then, the solution $x_k \in X_k$ of the dG(0)
scheme~\eqref{eq:primal-step} {(with initial data $x_k^{0} := x_{1,0}$)} satisfies the unconditional stability estimate
\begin{equation}\label{eq:discrete-stability}
\begin{split}
    \|x_k^{N}\|^2
  + \sum_{i=1}^{N-1}& \bigl\|x_k^{i+1} - x_k^i\bigr\|^2
  + \sum_{i=1}^{N} k_i \|x_k^i\|^2 
  \\&\leq c(\mu,T) \frac{1}{\min\{(\alpha+\mu),\lambda_{\mathrm{min}}(E_{11})\}} \left(\frac{1}{\alpha+\mu}\int_0^T \|F(t)\|^2\,\diff t
     + \|x_{1,0}\|_{E_{11}}^2\right),
\end{split}
\end{equation}
where
$c(\mu,T)$ is a constant with $c(\mu,T)\stackrel{\mu \to 0}{\to} 1$, $\alpha {\ge 0}$ is the smallest eigenvalue of 
$\widetilde S$ \eqref{eq:Stilde-def}
and $\mu \in {\mathbb{R}^+_0}$ is arbitrary {with $\alpha+\mu>0$}.  In particular, if $\alpha > 0$, we may choose $\mu =0$ such that the upper bound is uniform in the time horizon $T$.
\end{lemma}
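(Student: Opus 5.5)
The plan is to prove the estimate first for the shifted recursion $(x_\mu)_k$ from Lemma~\ref{lem:scaledtounscaled}, whose governing matrix $S+\mu E_{11}$ is coercive, and then transfer it back to $x_k$. Coercivity means $\langle v,(S+\mu E_{11})v\rangle \ge (\alpha+\mu)\|v\|^2$, since $\langle v,Sv\rangle=\langle v,\widetilde S v\rangle\ge\alpha\|v\|^2$ and $\mu\langle v,E_{11}v\rangle\ge 0$. The transfer is exactly Lemma~\ref{lem:scaledtounscaled}, which multiplies the bound by $c(\mu,T)$ with $c(\mu,T)\to 1$ as $\mu\to 0$. When $\alpha>0$ we may take $\mu=0$; then the two recursions coincide, $c=1$, and no $T$-dependence enters.

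For the shifted scheme, write $w^i:=(x_\mu)_k^i$ and $F^i:=\int_{I_i}F\,\diff t$. Test the recursion
\[
(E_{11}+k_i(S+\mu E_{11}))w^i=E_{11}w^{i-1}+F^i
\]
with $w^i$ and use the polarization identity
\[
\langle E_{11}(w^i-w^{i-1}),w^i\rangle=\tfrac12\|w^i\|_{E_{11}}^2-\tfrac12\|w^{i-1}\|_{E_{11}}^2+\tfrac12\|w^i-w^{i-1}\|_{E_{11}}^2 .
\]
Together with the coercivity bound this gives
\[
\tfrac12\|w^i\|_{E_{11}}^2-\tfrac12\|w^{i-1}\|_{E_{11}}^2+\tfrac12\|w^i-w^{i-1}\|_{E_{11}}^2+k_i(\alpha+\mu)\|w^i\|^2\le\langle F^i,w^i\rangle .
\]
For the right-hand side, apply Cauchy--Schwarz in time, $\|F^i\|\le k_i^{1/2}\|F\|_{L^2(I_i)}$, and then Young's inequality:
\[
\langle F^i,w^i\rangle\le \frac{1}{2(\alpha+\mu)}\|F\|_{L^2(I_i)}^2+\frac{(\alpha+\mu)k_i}{2}\|w^i\|^2 .
\]
The second term is absorbed into the left-hand side. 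Summing over $i=1,\dots,N$ telescopes the Hamiltonian terms. Multiplying by $2$ yields
\[
\|w^N\|_{E_{11}}^2+\sum_i\|w^i-w^{i-1}\|_{E_{11}}^2+(\alpha+\mu)\sum_i k_i\|w^i\|^2\le \frac{1}{\alpha+\mu}\|F\|_{L^2(0,T)}^2+\|x_{1,0}\|_{E_{11}}^2 .
\]

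Next, convert the left-hand side to Euclidean norms using $\|v\|_{E_{11}}^2\ge\lambda_{\min}(E_{11})\|v\|^2$. Every term on the left is then bounded below by $\min\{\alpha+\mu,\lambda_{\min}(E_{11})\}$ times the Euclidean quantity, and dividing by this minimum gives the claimed factor. The jump sum appearing here, $\sum_{i=1}^{N}\|w^i-w^{i-1}\|^2$, contains the sum in \eqref{eq:discrete-stability}, so that term is dominated. Finally, apply Lemma~\ref{lem:scaledtounscaled} to obtain the bound for $x_k$ with the factor $c(\mu,T)$.

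The main obstacle I expect is the index bookkeeping rather than the analysis itself. Definition~\ref{def:primal-scheme} writes $\int_{\bI}$ with $\bI=(t_i,t_{i+1})$, while step $i$ advances to $t_i$. I would interpret $F^i$ consistently as the integral over the step that $w^i$ closes, so that the Cauchy--Schwarz bound uses the matching step length $k_i$. The jump sum in the statement runs over $i=1,\dots,N-1$ with differences $x_k^{i+1}-x_k^i$; I would check that this is indeed a subset of the jumps controlled by the energy argument. Beyond this, everything reduces to the standard implicit Euler energy estimate.
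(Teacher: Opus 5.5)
\textbf{Gap: the coercivity constant.} Your route is the paper's proof step for step. You shift to $S+\mu E_{11}$, test with $(x_\mu)_k^i$, telescope, use coercivity, apply Young with parameter $\alpha+\mu$, divide by $\min\{\alpha+\mu,\lambda_{\min}(E_{11})\}$, and transfer back via Lemma~\ref{lem:scaledtounscaled}. You are even a bit more careful than the paper about the first jump $\|w^1-w^0\|^2$, which the paper's Step~1 ``identity'' omits. The gap is the coercivity claim. From $\langle v,\widetilde S v\rangle\ge\alpha\|v\|^2$ and $\mu\langle v,E_{11}v\rangle\ge0$ you only get $\langle v,(S+\mu E_{11})v\rangle\ge\alpha\|v\|^2$. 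What is actually true is $\langle v,(S+\mu E_{11})v\rangle\ge(\alpha+\mu\lambda_{\min}(E_{11}))\|v\|^2$, as in Remark~\ref{rem:scaling}. This is $\ge(\alpha+\mu)\|v\|^2$ only when $\lambda_{\min}(E_{11})\ge1$.

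The constant carries the whole $\mu>0$ argument. It is what absorbs the Young term, and it produces the factors $1/(\alpha+\mu)$ and $\min\{\alpha+\mu,\cdot\}$. The paper's Step~2 asserts the same inequality without justification, so you inherited the flaw rather than introduced it. However, no better argument with the same $\mu$ can close it. Take $r=1$, $E_{11}=\epsilon<\mu$, $S=0$, $F\equiv f$, $x_{1,0}=0$. This is realized, e.g., by $E=\mathrm{diag}(\epsilon,0)$, $Q=I$, $J=0$, $R=\mathrm{diag}(0,1)$, $B=(1,0)^\top$, $u\equiv f$. Then $x_k^N=Tf/\epsilon$, so the left-hand side of \eqref{eq:discrete-stability} is at least $T^2f^2/\epsilon^2$. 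The right-hand side equals $c(\mu,T)\,Tf^2/(\epsilon\mu)$. For fixed $\mu,T$ the inequality fails as $\epsilon\to0$, whatever $c(\mu,T)$ is.

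\textbf{How to repair it.} There are two options.
\begin{itemize}
\item Carry $\beta_\mu:=\alpha+\mu\lambda_{\min}(E_{11})$ in place of $\alpha+\mu$ throughout. Your computation then goes through verbatim. The result agrees with the statement when $\mu=0$ or $\lambda_{\min}(E_{11})\ge1$, e.g.\ $E_{11}=I$ as in the numerical examples.
\item Keep the statement's form by running the shifted recursion with $\tilde{\mu}:=\mu/\lambda_{\min}(E_{11})$. For this shift $\langle v,(S+\tilde{\mu}E_{11})v\rangle\ge(\alpha+\mu)\|v\|^2$ genuinely holds. Then apply Lemma~\ref{lem:scaledtounscaled} with $\tilde{\mu}$. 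The resulting factor $c(\tilde{\mu},T)$ still tends to $1$ as $\mu\to0$. By the example above, it must depend on $\lambda_{\min}(E_{11})$.
\end{itemize}
Your $\alpha>0$, $\mu=0$ case is unaffected: there the recursions coincide and $\langle v,Sv\rangle\ge\alpha\|v\|^2$ is correct.
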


\begin{proof}
We introduce 
$S_\mu := S + \mu E_{11}$, which is coercive for any $\mu > 0$ in the sense of Remark~\ref{rem:scaling}, and define $(x_\mu)_k \in X_k$ {as in \eqref{eq:discret-recursion-unshift-shift}
with initial data $(x_k^{0} := x_{1,0}$.})

We test the recursion for $(x_\mu)_k$ in \eqref{eq:discret-recursion-unshift-shift}  with $\varphi^i = (x_\mu)_k^i$ on each subinterval~$\kIi$. 
Rearranging gives
\begin{equation}\label{eq:local-tested}
  \bigl\langle E_{11}\bigl((x_\mu)_k^i - (x_\mu)_k^{i-1}\bigr),\, (x_\mu)_k^i
  \bigr\rangle
  + k_i\, \bigl\langle S_\mu\, (x_\mu)_k^i,\, (x_\mu)_k^i \bigr\rangle
  = \int_{\kIi} \bigl\langle F(t),\, (x_\mu)_k^i \bigr\rangle\,\diff t.
\end{equation}
We sum over $i = 1, \dots, N$ and estimate each group of terms.

\medskip
\noindent
\emph{Step~1 (Time-derivative term).}
The algebraic identity
$\langle A(a{-}b), a \rangle
  = \tfrac12\bigl(\|a\|_A^2 - \|b\|_A^2 + \|a{-}b\|_A^2\bigr)$,
applied with $A = E_{11}$, $a = (x_\mu)_k^i$, $b = (x_\mu)_k^{i-1}$, yields the
telescoping sum
\begin{equation*}
  \sum_{i=1}^{N}
    \bigl\langle E_{11}\bigl((x_\mu)_k^i - (x_\mu)_k^{i-1}\bigr),\, (x_\mu)_k^i \bigr\rangle
  = \tfrac12\bigl(\|(x_\mu)_k^{N}\|_{E_{11}}^2
      - \|x_{1,0}\|_{E_{11}}^2\bigr)
  + \tfrac12 \sum_{i=1}^{N-1}
      \|(x_\mu)_k^{i+1} - (x_\mu)_k^i\|_{E_{11}}^2.
\end{equation*}

\noindent
\emph{Step~2 (Dissipation term).}
Since $\langle S_\mu\, v, v \rangle = \langle (S + \mu E_{11})\, v, v \rangle
\geq (\alpha+\mu)\|v\|^2$ for all $v$, we obtain
\begin{equation*}
  \sum_{i=1}^{N} k_i\,\langle S_\mu\, (x_\mu)_k^i,\, (x_\mu)_k^i \rangle
  \geq (\alpha+\mu) \sum_{i=1}^{N} k_i\, \|(x_\mu)_k^i\|^2.
\end{equation*}

\noindent
\emph{Step~3 (Right-hand side).}
By Young's inequality with parameter $\varepsilon = \alpha+\mu$,
\begin{equation*}
  \sum_{i=1}^{N} \int_{I_i}
    \bigl\langle F(t),\, (x_\mu)_k^i \bigr\rangle\,\diff t
  \leq \frac{1}{2(\alpha+\mu)}\int_0^T \|F(t)\|^2\,\diff t
     + \frac{\alpha+\mu}{2} \sum_{i=1}^{N} k_i\,\|(x_\mu)_k^i\|^2.
\end{equation*}

\noindent
\emph{Step~4 (Assembly and resubstiution).}
Combining the above steps and rearranging gives

\begin{align*}
  \tfrac12\|(x_\mu)_k^{N}\|_{E_{11}}^2
  + \tfrac12 \!\sum_{i=1}^{N-1}\|(x_\mu)_k^{i+1} \!- (x_\mu)_k^i\|_{E_{11}}^2
  &+ \tfrac{\alpha+\mu}{2}\sum_{i=1}^{N} k_i\,\|(x_\mu)_k^i\|^2
  \\&\leq \tfrac12\|x_{1,0}\|_{E_{11}}^2
     + \frac{1}{2(\alpha+\mu)}\!\int_0^T \!\!\|F(t)\|^2\,\diff t.
\end{align*}
Dividing by $\tfrac12\min\{(\alpha+\mu),\,\lambda_{\min}(E_{11})\}$ to convert all $E_{11}$-norms on the left to Euclidean norms yields~\eqref{eq:discrete-stability} for the shifted variable $(x_{\mu})_k$ (without $c(\mu,T)$).

Using Lemma~\ref{lem:scaledtounscaled} (assumptions are fulfilled), we obtain for the unscaled variable estimate~\eqref{eq:discrete-stability}.

\end{proof}
\color{black}

The following result is a standard result from the literature.
\begin{lemma}[A priori error estimate]\label{lem:apriori}
Let $x_1 \in H^1(0,T;\R^r)$ be the exact solution of the reduced
equation~\eqref{eq:Reduced-System} and let $x_k$ be its dG(0)
approximation with exact initial data $x_k^0 = x_1(0)$. Then
\begin{equation}\label{eq:apriori}
  \|x_k - x_1\|_{L^2(0,T;\R^n)}
  \le C\, k\, \|\dot x_1\|_{L^2(0,T;\R^n)},
\end{equation}
where $k := \max_i k_i$ and $C>0$.
In particular, the dG(0) scheme is first-order accurate in time. If the system is exponentially stable, $C>0$ is also independent of the time horizon $T$.
\end{lemma}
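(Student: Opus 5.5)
The plan is the standard splitting of the error through a projection onto $X_k$, combined with the discrete stability of Lemma~\ref{lem:stability}. Let $P_k x_1\in X_k$ be the local $L^2$-projection, i.e.\ the mean value of $x_1$ on each $\kIi$. A variant is to interpolate at the right endpoints, $(\Pi_k x_1)^i := x_1(t_{i+1})$, which matches the implicit Euler structure of \eqref{eq:primal-step}. Write
\[
x_k - x_1 = (x_k - \Pi_k x_1) + (\Pi_k x_1 - x_1) =: \theta + \eta .
\]
On each interval we have $\eta(t) = x_1(t_{i+1}) - x_1(t) = \int_t^{t_{i+1}} \dot x_1\,\diff s$. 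Cauchy--Schwarz then gives $\|\eta\|_{L^2(\kIi)}\le k_i\|\dot x_1\|_{L^2(\kIi)}$, and summing over $i$ yields $\|\eta\|_{L^2(0,T)}\le k\|\dot x_1\|_{L^2(0,T)}$.

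For the discrete part $\theta$, I will derive an error recursion. Integrating the reduced equation \eqref{eq:Reduced-System} over $\kIi$ shows that the exact nodal values satisfy
\[
(E_{11}+k_iS)\,x_1(t_{i+1}) = E_{11}x_1(t_i) + \int_{\kIi}F\,\diff t + \tau^i,
\qquad
\tau^i := S\int_{\kIi}\bigl(x_1(t_{i+1})-x_1(t)\bigr)\diff t .
\]
Subtracting this from \eqref{eq:primal-step} gives the recursion
\[
(E_{11}+k_iS)\theta^i = E_{11}\theta^{i-1} - \tau^i, \qquad \theta^0 = 0 .
\]
This is the dG(0) scheme with zero initial data and a piecewise constant forcing $F_\theta := -\tau^i/k_i$ on $\kIi$. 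Applying Lemma~\ref{lem:stability} to $\theta$ gives
\[
\sum_i k_i\|\theta^i\|^2 \le C\int_0^T\|F_\theta\|^2\,\diff t = C\sum_i \frac{\|\tau^i\|^2}{k_i}.
\]
Since $\|\tau^i\|\le \|S\|\,k_i^{3/2}\|\dot x_1\|_{L^2(\kIi)}$ by the same Cauchy--Schwarz argument, we get $\|\tau^i\|^2/k_i\le \|S\|^2k^2\|\dot x_1\|^2_{L^2(\kIi)}$. Summing gives $\|\theta\|_{L^2}\le Ck\|\dot x_1\|_{L^2}$, and the triangle inequality closes the estimate \eqref{eq:apriori}. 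Because the norm in the statement is on $\R^n$, a final remark will note that the reconstructed full state $Vx_1+W\hat x_2$ inherits the same bound via the bounded map \eqref{eq:algebraic-reconstruction}, when $\hat x_2$ is evaluated consistently.

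The main obstacle is the constant, in particular its dependence on $T$. Lemma~\ref{lem:stability} with $\alpha=0$ requires $\mu>0$, and the factor $c(\mu,T)$ from Lemma~\ref{lem:scaledtounscaled} grows like $e^{\mu T}$. This gives a $T$-dependent constant in general, which is acceptable for the first claim.

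For the exponentially stable case, $\widetilde S$ need not be coercive in the Euclidean product, so $\alpha$ may still vanish. I would first try to replace the Euclidean inner product by the one induced by a Lyapunov matrix $P>0$ with $P E_{11}^{-1}S + (E_{11}^{-1}S)^\top P > 0$. That is, I would test with $E_{11}^{-1}P E_{11}\theta^i$ (or an equivalent weighting) and rerun the energy argument of Lemma~\ref{lem:stability} with $\mu=0$. The point to verify carefully is that the telescoping identity of Step~1 survives this change of weight. If it does not, the fallback is a discrete variation-of-constants argument: use the uniform power bound $\|(E_{11}+k_iS)^{-1}E_{11}\|_P\le (1+ck_i)^{-1}$ and a discrete Young convolution inequality to bound the accumulated local errors $\tau^i$ uniformly in $N$ and $T$.
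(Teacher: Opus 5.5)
Your proof of the main estimate is correct, and it takes a more explicit route than the paper. The paper derives nothing itself: it invokes Remark~\ref{rem:scaling} to obtain a coercive operator and then cites \cite[Theorem~5.5]{meidner2008}.

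You write that standard argument out in discrete form. The right-endpoint interpolant is exactly the projection for which the jump terms cancel. Hence the discrete error $\theta$ solves the scheme with zero initial data, driven only by the consistency defect $\tau^i=S\int_{\kIi}(x_1(t_{i+1})-x_1(t))\,\diff t$. The paper's own Lemma~\ref{lem:stability} with forcing $-\tau^i/k_i$, combined with $\|\tau^i\|\le\|S\|k_i^{3/2}\|\dot x_1\|_{L^2(\kIi)}$, then closes the estimate. This buys an explicit constant and shows where $T$ enters.

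It also exposes something the paper glosses over. Exponential stability of $-E_{11}^{-1}S$ does not imply $\alpha>0$: take $E_{11}=I$ and $S=\bigl(\begin{smallmatrix}0&1\\-1&1\end{smallmatrix}\bigr)$, where $\widetilde S$ is singular but $-S$ is Hurwitz. So neither Lemma~\ref{lem:stability} with $\mu=0$ nor the scaling remark gives a $T$-uniform constant. A Lyapunov inner product is precisely the missing ingredient for the last claim.

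Your first plan for that claim works, but with a different test function from the one you wrote. Let $A:=E_{11}^{-1}S$ and choose $P>0$ with $PA+A^\top P\ge 2\beta I$ for some $\beta>0$. Test $E_{11}(\theta^i-\theta^{i-1})+k_iS\theta^i=-\tau^i$ with $E_{11}^{-1}P\theta^i$. This gives
\begin{equation*}
  \tfrac12\bigl(\|\theta^i\|_P^2-\|\theta^{i-1}\|_P^2+\|\theta^i-\theta^{i-1}\|_P^2\bigr)
  + k_i\langle PA\theta^i,\theta^i\rangle
  = -\langle \tau^i, E_{11}^{-1}P\theta^i\rangle .
\end{equation*}
The telescoping survives verbatim because $P$ is symmetric, and the second term is at least $\beta k_i\|\theta^i\|^2$. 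Young's inequality then yields $\sum_i k_i\|\theta^i\|^2\le \beta^{-2}\|E_{11}^{-1}P\|^2\sum_i\|\tau^i\|^2/k_i$, which is independent of $T$.

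Your suggested test function $E_{11}^{-1}PE_{11}\theta^i$ would instead produce the weight $PE_{11}$. That weight is symmetric only if $P$ commutes with $E_{11}$, so the telescoping would fail in general.

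One minor point: the constant $c(\mu,T)$ of Lemma~\ref{lem:scaledtounscaled}, as given in the appendix, grows polynomially in $T$, not like $e^{\mu T}$. This does not affect your conclusion.
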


\begin{proof}
This is a standard a priori error estimate for the dG(0) time
discretization of linear evolution equations with coercive main operator which may be always achieved by scaling in view of Remark~\ref{rem:scaling}.
In finite dimensions, coercivity of the operator is equivalent to
positive definiteness of its symmetric part, and the result follows
directly from the general theory, see~\cite[Theorem~5.5]{meidner2008}. 

\end{proof}
\color{black}

\subsection{Corresponding discrete adjoint problem}\label{sec:adjoint-discrete}

We discretize the adjoint problem on the same partition $\mathcal{T}$ and in the same
space $X_k$ \eqref{eq:discrete-space} as the primal problem. The discrete adjoint is obtained by
transposing the block structure of the primal system, which
automatically produces a backward-in-time recurrence.
In fact, the primal scheme~\eqref{eq:primal-step} defines, over all $N$
intervals, a block lower-bidiagonal linear system
$\mathbf{A}\,\mathbf{x} = \mathbf{b}$, with
\begin{equation}\label{eq:primal-block}
  \mathbf{A} =
  \begin{pmatrix}
    E_{11} + k_1 S &                 &        &                      \\
    -E_{11}        & E_{11} + k_2 S  &        &                      \\
                    \mbox{}\hspace*{10ex}\ddots           & \mbox{}\hspace*{10ex}\ddots &                      \\
                                 & -E_{11} & E_{11} + k_{N} S
  \end{pmatrix}.
\end{equation}
Thus, the discrete adjoint system is the transposed problem
$\mathbf{A}^{\!\top} \mathbf{z} = \mathbf{R}$, which 
reads ($E_{11}^\top=E_{11}$)
\begin{equation}\label{eq:adjoint-block}
  \begin{pmatrix}
    E_{11} + k_1 S^\top & -E_{11}              &        &  \\
                        & E_{11} + k_2 S^\top   & \ddots &  \\
                        &                       & \ddots & -E_{11} \\
                        &                       &        & E_{11} + k_{N}S^\top
  \end{pmatrix}
  \begin{pmatrix} z_k^1 \\ z_k^2 \\ \vdots \\ z_k^{N} \end{pmatrix}
  =
  \begin{pmatrix}
    \mathbf{R}_1 \\ \mathbf{R}_2 \\ \vdots \\ \mathbf{R}_{N}
  \end{pmatrix},
\end{equation}
where the right-hand side $\mathbf{R}_i$ is the derivative of the goal
functional $\mathcal J$ \eqref{eq:J-def} with respect to~$x_k^i$:
\begin{equation}\label{eq:discrete-source}
  \mathbf{R}_i
  = 2\,\mathcal G_i(x_k)
    \int_{I_i} \nabla_{x_1} g\bigl(t, x_k^i\bigr)\,\diff t
  + 2\bigl(\mathcal G_i(x_k) - \mathcal G_{i+1}(x_k)\bigr)\,
    E_{11}\, x_k^i,
  \qquad i = 1,\dots,N,
\end{equation}
with the convention $\mathcal G_{N+1} := 0$.

The backward recurrence emerging from the system~\eqref{eq:adjoint-block}:
\begin{definition}[Discrete adjoint scheme]\label{def:adjoint-scheme}
Set $z_k^N := 0$.  For $i = N, \dots, 1$ and $\mathbf{R}_i$ given by~\eqref{eq:discrete-source}, solve
\begin{equation}\label{eq:dual-step}
  \bigl(E_{11} + k_i\, S^\top\bigr)\, z_k^i
  = E_{11}\, z_k^{i+1} + \mathbf{R}_i.
\end{equation}

\end{definition}

\begin{remark}
\label{rem:adjoint-wellposed}
The system matrix $E_{11} + k_i\, S^\top$ has symmetric part
$E_{11} + k_i\, \widetilde S$, which is positive definite as $\widetilde S\geq 0$ and $E_{11} >0$. Hence, each step of
the backward recurrence~\eqref{eq:dual-step} is uniquely solvable.

\end{remark}

\begin{lemma}[Adjoint stability]\label{lem:adjoint-stability}
Let Assumption~\ref{ass:ph-structure} hold. {Then $z_k$~\eqref{eq:dual-step} satisfies}

\begin{equation}\label{eq:adjoint-discrete-stability}
  \|z_k^1\|^2
  + \sum_{i=1}^{N-1} \bigl\|z_k^{i+1} - z_k^i\bigr\|^2
  + \sum_{i=1}^{N-1} k_i\, \|z_k^i\|^2
  \leq \frac{1}{\min\{(\alpha+\mu)/2,\lambda_{\mathrm{min}}(E_{11})\}} \frac{c(\mu,T)}{\alpha+\mu}
       \sum_{i=1}^{N} \frac{\|\mathbf{R}_i\|^2}{k_i}
\end{equation}
where
$c(\mu,T)$ is a constant with $c(\mu,T)\stackrel{\mu \to 0}{\to} 1$, $\alpha  \ge 0 \color{black}$ is the smallest eigenvalue of
$\widetilde{S}$ \eqref{eq:Stilde-def}
and $\mu \in  \mathbb{R}_0^{+}$ is arbitrary with $\alpha + \mu > 0$\color{black}.  In particular, if $\alpha > 0$, we may choose $\mu =0$ 
{and $c$ is independent of $T$}. 
\end{lemma}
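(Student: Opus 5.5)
The plan is to mirror the proof of Lemma~\ref{lem:stability}, with time reversed and the right-hand side $\mathbf{R}_i$ playing the role of $\int_{\kIi}F\,\diff t$. First we introduce the shifted backward sequence $(z_\mu)_k\in X_k$ defined by $(z_\mu)_k^{N}:=0$ and
\begin{equation*}
  \bigl(E_{11} + k_i\,(S^\top+\mu E_{11})\bigr)(z_\mu)_k^i = E_{11}(z_\mu)_k^{i+1} + \mathbf{R}_i,\qquad i=N,\dots,1 .
\end{equation*}
The matrix $S^\top+\mu E_{11}$ has the same symmetric part $\widetilde S+\mu E_{11}\ge(\alpha+\mu)$ as $S_\mu$, so coercivity is available. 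We derive the energy estimate for $(z_\mu)_k$ and then transfer it to $z_k$ by a backward analogue of Lemma~\ref{lem:scaledtounscaled}. This analogue is obtained by applying that lemma to the index-reversed sequences $i\mapsto N+1-i$: it only uses the algebraic structure of the two recursions, not the direction of time, and this produces the factor $c(\mu,T)$.

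For the shifted sequence we test step $i$ with $(z_\mu)_k^i$. The term $\langle E_{11}((z_\mu)_k^i-(z_\mu)_k^{i+1}),(z_\mu)_k^i\rangle$ is handled by the same polarization identity as in Step~1 of Lemma~\ref{lem:stability}. Summed over $i$, it telescopes backward to
\begin{equation*}
  \tfrac12\|(z_\mu)_k^1\|_{E_{11}}^2-\tfrac12\|(z_\mu)_k^{N}\|_{E_{11}}^2+\tfrac12\sum_i\|(z_\mu)_k^{i+1}-(z_\mu)_k^i\|_{E_{11}}^2 ,
\end{equation*}
and the end value vanishes since $(z_\mu)_k^N=0$. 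The dissipation term gives $(\alpha+\mu)\sum_i k_i\|(z_\mu)_k^i\|^2$. For the source term we use Young's inequality in the discrete form
\begin{equation*}
  \langle \mathbf{R}_i,(z_\mu)_k^i\rangle\le \frac{\|\mathbf{R}_i\|^2}{2(\alpha+\mu)k_i}+\frac{(\alpha+\mu)k_i}{2}\|(z_\mu)_k^i\|^2 .
\end{equation*}
This is exactly why $\|\mathbf{R}_i\|^2/k_i$ appears: $\mathbf{R}_i$ is an integrated quantity, like $\int_{\kIi}F$, but it also contains nodal contributions that do not scale with $k_i$. Absorbing half the dissipation term leaves
\begin{equation*}
  \tfrac12\|(z_\mu)_k^1\|_{E_{11}}^2+\tfrac12\sum\|\Delta(z_\mu)_k\|_{E_{11}}^2+\tfrac{\alpha+\mu}{2}\sum k_i\|(z_\mu)_k^i\|^2\le \frac{1}{2(\alpha+\mu)}\sum_i\frac{\|\mathbf{R}_i\|^2}{k_i}.
\end{equation*}
Multiplying by $2$ and dividing by $\min\{(\alpha+\mu)/2,\lambda_{\min}(E_{11})\}$ (after bounding the $E_{11}$-norms below by Euclidean norms) gives the claimed constant structure. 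If $\alpha>0$ we take $\mu=0$, so $c=1$ and nothing depends on $T$.

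The main obstacle is the index bookkeeping. The recursion \eqref{eq:dual-step} is written for $i=N,\dots,1$ with $z_k^N:=0$, which conflicts with also solving for $z_k^N$. I would read it as a terminal value $z_k^{N+1}=0$, consistent with the block system \eqref{eq:adjoint-block}, and check that the sums in \eqref{eq:adjoint-discrete-stability} (running to $N-1$) are then bounded by the full sums, which is harmless. The other delicate point is justifying the backward version of Lemma~\ref{lem:scaledtounscaled}. The intended argument is that, after reversing indices, the backward recursions have exactly the form \eqref{eq:discret-recursion-unshift-shift} with $S$ replaced by $S^\top$ and $\int_{\bI}F$ replaced by $\mathbf{R}_i$. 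The appendix proof should only use that $S^\top$ has the same symmetric part as $S$ and that both sequences share the same data; if it does use more, I would redo that argument for this case.
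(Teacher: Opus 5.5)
Your proposal is correct and follows the paper's proof essentially step for step. The shared steps are: the shifted backward recursion with $S^\top+\mu E_{11}$ and zero terminal value, testing with $(z_\mu)_k^i$ and telescoping, Young's inequality in the form that produces $\|\mathbf{R}_i\|^2/k_i$, conversion of $E_{11}$-norms to Euclidean norms, and transfer to $z_k$ via the index-reversed Lemma~\ref{lem:scaledtounscaled}, using that $S^\top$ has the same symmetric part as $S$. Your reading of the terminal condition as $z_k^{N+1}=0$ is, if anything, cleaner than the paper's, which runs the recursion only for $i=N-1,\dots,1$ with $z_k^N=0$. One caveat, which you share with the paper: $\widetilde S+\mu E_{11}\ge(\alpha+\mu\,\lambda_{\min}(E_{11}))I$, not $(\alpha+\mu)I$, so for $\lambda_{\min}(E_{11})<1$ you should replace $\alpha+\mu$ by $\alpha+\mu\,\lambda_{\min}(E_{11})$ in the coercivity and Young steps; this does not affect the case $\alpha>0$, $\mu=0$.
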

\begin{proof}
The argument mirrors the 
proof of Lemma~\ref{lem:stability}. We use 
$S_\mu := S + \mu E_{11}$, which is coercive for any $\mu > 0$, and
define $(z_\mu)_k \in X_k$ as the solution of the {corresponding backward recursion:} 

\begin{equation*}
  E_{11}\bigl((z_\mu)_k^i - (z_\mu)_k^{i+1}\bigr)
  + k_i\, S_\mu^\top\, (z_\mu)_k^i
  = \mathbf{R}_i.
\qquad i = N{-}1, \dots, 1,
\end{equation*}
with $(z_\mu)_k^N := 0$ and $\mathbf{R}_i$ \eqref{eq:discrete-source}, i.e., the same right-hand side as in the unscaled case \eqref{eq:dual-step}. \color{black}

Testing with $(z_\mu)_k^i$ and using the identity from Step~1 of the proof of Lemma~\ref{lem:stability} with $a = (z_\mu)_k^i$, $b = (z_\mu)_k^{i+1}$, together with the coercivity bound from Step~2 of Lemma~\ref{lem:stability}, yields
\begin{equation*}
  \tfrac12\bigl(\|(z_\mu)_k^i\|_{E_{11}}^2
    - \|(z_\mu)_k^{i+1}\|_{E_{11}}^2
    + \|(z_\mu)_k^i - (z_\mu)_k^{i+1}\|_{E_{11}}^2\bigr)
  + k_i\,(\alpha+\mu)\,\|(z_\mu)_k^i\|^2
  \leq \bigl\langle \mathbf{R}_i,\, (z_\mu)_k^i \bigr\rangle.
\end{equation*}
Summing over $i = 1, \dots, N{-}1$ and using $(z_\mu)_k^N = 0$ on the
left-hand side, the telescope sum gives
\begin{equation}\label{eq:adjoint-stab-intermediate}
  \tfrac12\,\|(z_\mu)_k^1\|_{E_{11}}^2
  + \tfrac12 \sum_{i=1}^{N-1}
      \|(z_\mu)_k^{i+1} - (z_\mu)_k^i\|_{E_{11}}^2
  + (\alpha+\mu) \sum_{i=1}^{N-1} k_i\, \|(z_\mu)_k^i\|^2
  \leq \sum_{i=1}^{N-1}
    \bigl\langle \mathbf{R}_i,\, (z_\mu)_k^i \bigr\rangle.
\end{equation}
For the right-hand side, we apply
Young's inequality with parameter $\varepsilon = \alpha+\mu$:
\begin{equation*}
  \bigl\langle \mathbf{R}_i,\, (z_\mu)_k^i \bigr\rangle
  = \Bigl\langle \frac{\mathbf{R}_i}{\sqrt{k_i}},\;
      \sqrt{k_i}\, (z_\mu)_k^i \Bigr\rangle
  \leq \frac{1}{2(\alpha+\mu)}\,
       \frac{\|\mathbf{R}_i\|^2}{k_i}
     + \frac{\alpha+\mu}{2}\, k_i\, \|(z_\mu)_k^i\|^2.
\end{equation*}
{We substitute  into~\eqref{eq:adjoint-stab-intermediate}, rearrange and
divide by $\tfrac12\min\{(\alpha+\mu),\,\lambda_{\min}(E_{11})\}$ to convert
$E_{11}$-norms to Euclidean norms. This }
gives~\eqref{eq:adjoint-discrete-stability} for the shifted adjoint $(z_\mu)_k$.

Since $S^\top$ has the same symmetric part $\widetilde{S}$ as $S$, Lemma~\ref{lem:scaledtounscaled} applies to the transposed backward system, and the estimate~\eqref{eq:adjoint-discrete-stability} for $z_k$ follows.
\end{proof}
\color{black}

\begin{remark}[Scaling of the source norm]
\label{rem:source-scaling}
The bound~\eqref{eq:adjoint-discrete-stability} involves the weighted
sum $\sum_i \|\mathbf{R}_i\|^2/k_i$.  The discrete
source~\eqref{eq:discrete-source} contains an integral term of size
$O(k_i)$ and an endpoint  
term of size $O(1)$, so
$\|\mathbf{R}_i\|^2/k_i = O(1/k_i)$ on intervals where the
energy-balance residual changes abruptly.  This is consistent with the
structure of the continuous adjoint, which involves nodal jump
conditions. 
In a piecewise constant time discretization, such nodal contributions
naturally lead to the observed $O(1/k_i)$ scaling.
\end{remark}

\section{A~posteriori error estimation and adaptive algorithm}
\label{sec:DWR-analysis}
The analysis follows the DWR framework 
as developed in~\cite{Rannacher2003}, using
the Lagrangian formalism to handle the quartic structure of
$\mathcal J$.  We specialize the abstract identity to the dG(0)
discretization, derive computable local error indicators, and present
the complete adaptive loop.

\subsection{Error representation and residuals for dG(0)}\label{sec:abstract-DWR}

We define the \emph{Lagrangian} $\mathcal L\colon X \times Z \to \R$ by
\begin{equation}\label{eq:lagrangian}
  \mathcal L(x_1, z) := \mathcal J(x_1) - \mathcal{A}(x_1)(z),
\end{equation}
with primal trial space $X$,
adjoint space $Z$, \eqref{eq:broken-space}, 
goal
functional $\mathcal J$, \eqref{eq:J-def}, and residual
form $\mathcal{A}$, \eqref{eq:residual-form}.

At the exact primal solution $x_1$ we have $\mathcal{A}(x_1)(\cdot) = 0$, such that
$\mathcal L(x_1, z) = \mathcal J(x_1)$ and the pair $(x_1, z)$ is a
stationary point of $\mathcal L$.
At the discrete level, the dG(0) solution $x_k \in X_k$ satisfies
$\mathcal{A}(x_k)(\p_k) = 0$ for all $\p_k \in X_k$, and the discrete adjoint
$z_k \in X_k$ satisfies
$\mathcal{A}'(x_k)(\p_k, z_k) = \mathcal J'(x_k)(\p_k)$ for all
$\p_k \in X_k$.  These are the discrete stationarity conditions of
$\mathcal L$.

\begin{theorem}[DWR error identity]\label{thm:error-rep}
Let $(x_1, z) \in X \times Z$ solve the continuous primal \eqref{eq:continuous-weak-form} and adjoint
problems \eqref{eq:adjoint-weak}, and let $(x_k, z_k) \in X_k \times X_k$ be the dG(0)
approximations with according residuals
\begin{align}\label{eq:residuals-def}
  \rho(x_k)(w) &:= -\mathcal{A}(x_k)(w),
  \qquad 
  \rho^*(x_k, z_k)(v)
    := \mathcal J'(x_k)(v) - \mathcal{A}'(x_k)(v, z_k).
\end{align}
Then, for any $\p_k, \psi_k \in X_k$,
\begin{equation}\label{eq:DWR-identity}
  \mathcal J(x_1) - \mathcal J(x_k)
  = \tfrac12\,\rho(x_k)(z - \p_k)
  + \tfrac12\,\rho^*(x_k, z_k)(x_1 - \psi_k)
  + \mathcal R^{(3)},
\end{equation}
where the remainder is cubic in the primal error $e := x_1 - x_k$:
\begin{equation*}
  \mathcal R^{(3)}
  = \tfrac12 \int_0^1
      \mathcal J'''\bigl(x_k + s\,e\bigr)(e, e, e)\;
      s(s{-}1)\,\diff s
  = O(\|e\|_X^3).
\end{equation*}
\end{theorem}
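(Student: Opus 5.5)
We follow the standard Becker--Rannacher argument for the Lagrangian \eqref{eq:lagrangian}, applied to the primal--adjoint pairs $\xi := (x_1,z)$ and $\xi_k := (x_k,z_k)$ in $X\times Z$. Since $\mathcal{A}(x_1)(\cdot)=0$ and $\mathcal{A}(x_k)(z_k)=0$ (test $\mathcal A(x_k)(\cdot)=0$ on $X_k$ with $z_k\in X_k$), we have
\[
\mathcal J(x_1)-\mathcal J(x_k)=\mathcal L(\xi)-\mathcal L(\xi_k).
\]
We then write this difference as $\int_0^1 \mathcal L'(\xi_k+s(\xi-\xi_k))(\xi-\xi_k)\,\diff s$. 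Next we apply the trapezoidal rule with its exact remainder:
\[
\int_0^1 f(s)\,\diff s=\tfrac12\bigl(f(0)+f(1)\bigr)+\tfrac12\int_0^1 f''(s)\,s(s-1)\,\diff s,
\]
with $f(s)=\mathcal L'(\xi_k+s(\xi-\xi_k))(\xi-\xi_k)$.

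The endpoint terms are handled as follows. At $s=1$, $\mathcal L'(\xi)(\xi-\xi_k)=0$ because $(x_1,z)$ is stationary. The $z$-derivative $-\mathcal A(x_1)(\cdot)$ vanishes, and the $x$-derivative is $\mathcal J'(x_1)(v)-\mathcal A'(x_1)(v,z)=0$ by \eqref{eq:adjoint-weak}. One must check that $e=x_1-x_k$ is an admissible direction even though $x_k\notin X$. We handle this by interpreting $\mathcal A$ in its dG (broken) form with jump terms, so that it is defined on $X+X_k$; the adjoint identity then extends by the integration-by-parts computation in the proof of Lemma~\ref{lem:adjoint-strong}. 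At $s=0$ we get
\[
\mathcal L'(\xi_k)(\xi-\xi_k)=\rho^*(x_k,z_k)(x_1-x_k)+\rho(x_k)(z-z_k),
\]
directly from \eqref{eq:residuals-def}. Galerkin orthogonality allows us to replace $x_k$ and $z_k$ by arbitrary $\psi_k,\p_k\in X_k$. Indeed, $\rho(x_k)(w_k)=0$ by the primal scheme, and $\rho^*(x_k,z_k)(v_k)=0$ by the discrete adjoint equation \eqref{eq:dual-step}. This gives the two half-weighted residual terms in \eqref{eq:DWR-identity}.

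It remains to compute $f''$. Since $\mathcal A$ is affine in $x_1$, linear in $z$, and $\mathcal A'$ is independent of $x_1$, every third derivative of $\mathcal L$ that involves the adjoint component, or $\mathcal A$ at all, vanishes. Thus $\mathcal L'''=\mathcal J'''$ acting on the primal components only, so $f''(s)=\mathcal J'''(x_k+se)(e,e,e)$, which yields the stated $\mathcal R^{(3)}$. Because $\mathcal J$ is quartic, $\mathcal J'''$ is affine in its base point and bounded on bounded sets, using the continuity of point evaluations and of $\nabla g$. This gives $\mathcal R^{(3)}=O(\|e\|^3)$ for $e$ bounded.

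The main obstacle is the functional setting. Because $x_k$ is discontinuous, the dG residual (with the jump terms $\langle E_{11}[x_k]_i,\cdot\rangle$) and the point evaluations in $\mathcal G_i$ must be interpreted consistently, for example through one-sided limits. We would fix this by defining $\mathcal A$ on the broken space $X+X_k$ in dG form, which agrees with \eqref{eq:residual-form} on $X$. We would then verify that the stationarity at $\xi$ and the Galerkin orthogonality both hold in this extended sense, reusing the summation-by-parts identity from the proof of Lemma~\ref{lem:adjoint-strong}.
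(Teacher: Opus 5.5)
Your proposal is correct and follows the same route as the paper, whose proof just cites the standard Becker--Rannacher Lagrangian/trapezoidal-rule identity \cite[Proposition~6.2]{Rannacher2003}; since $\mathcal A(x)(z)$ is jointly quadratic in $(x,z)$, only $\mathcal J'''$ survives in the remainder, exactly as you compute. Your extra care about the nonconformity $X_k\not\subset X$ goes through and covers a point the cited conforming result does not address: you define $\mathcal A$ in upwind dG form on the broken space and read the point evaluations in $\mathcal G_i$ as left limits, so that stationarity at $(x_1,z)$ also holds in the broken direction $e=x_1-x_k$.
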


\begin{proof}
For a proof, see \cite[Proposition 6.2]{Rannacher2003}. 
\end{proof}

We evaluate the abstract residuals~\eqref{eq:residuals-def} using the
piecewise constant structure of the dG(0) discretization.  Since $x_k$
and $z_k$ are constant on each interval, their time derivatives vanish
in the interior, and the residuals decompose into interior contributions
(from the algebraic mismatch with the right-hand side) and nodal
contributions (from the jumps at the time nodes).

\begin{lemma}[Primal residual]\label{lem:explicit-primal}
For any $w \in X$, the primal residual~(\ref{eq:residuals-def})
decomposes as
\begin{equation}\label{eq:primal-residual-explicit}
  \rho(x_k)(w)
  = \sum_{i=1}^{N} \bigl[
      \mathcal R_{\mathrm{int}}^i(w)
    + \mathcal R_{\mathrm{jump}}^i(w)
    \bigr],
\end{equation}
with, using the jump notation $[x_k]_i$ as defined in \eqref{eq:jumps},
\begin{align}\label{eq:primal-residual-components}
  \mathcal R_{\mathrm{int}}^i(w)
    &:= \int_{t_{i-1}}^{t_{i}}
           \bigl\langle F(t) - S\, x_k^i,\; w(t) \bigr\rangle
         \,\diff t,
    \qquad 
  \mathcal R_{\mathrm{jump}}^i(w)
    := -\bigl\langle E_{11}\,[x_k]_i,\; w(t_{i-1}^+) \bigr\rangle,
\end{align}
where $[x_k]_i := x_k^i - x_k^{i-1}$ with $x_k^0 = x_{1,0}$.
\end{lemma}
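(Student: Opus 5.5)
Since $x_k$ is piecewise constant and hence not in $X = H^1(0,T;\R^r)$, the first task is to fix what $\mathcal{A}(x_k)(w)$ means. I will use the standard broken (dG) interpretation of the residual form. On each interval the distributional time derivative of $x_k$ is a Dirac mass at the left node $t_{i-1}$ with weight $[x_k]_i$, and it vanishes in the interior. Concretely, I would take
\begin{equation*}
  \mathcal{A}(x_k)(w) = \sum_{i=1}^{N}\Bigl[\int_{t_{i-1}}^{t_i}\bigl\langle E_{11}\dot x_k + S x_k - F,\, w\bigr\rangle\,\diff t + \bigl\langle E_{11}[x_k]_i,\, w(t_{i-1}^+)\bigr\rangle\Bigr].
\end{equation*}
This is exactly the formulation from which the recursion \eqref{eq:primal-step} was obtained, since it is the dG(0) formulation with upwind jump at $t_{i-1}$ and initial value $x_k^0 = x_{1,0}$.

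I would justify this form in one line. Mollify $x_k$, or equivalently integrate by parts on each $\kIi$ as in the dG(0) derivation. Then check that testing with $w=\p_k\in X_k$ reproduces \eqref{eq:primal-step}, namely
\begin{equation*}
  E_{11}(x_k^i-x_k^{i-1}) + k_i S x_k^i - \int_{\kIi}F\,\diff t = 0 .
\end{equation*}
This confirms that the chosen interpretation is consistent with Galerkin orthogonality.

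With this in hand the computation is direct:
\begin{enumerate}
\item On each open interval $(t_{i-1},t_i)$ we have $x_k\equiv x_k^i$, so $\dot x_k=0$ there. The interior integrand reduces to $\langle S x_k^i - F, w\rangle$.
\item Apply the sign flip $\rho = -\mathcal{A}$. This turns the interior term into $\int\langle F - S x_k^i, w\rangle\,\diff t = \mathcal R^i_{\mathrm{int}}(w)$.
\item The same sign flip turns the nodal term into $-\langle E_{11}[x_k]_i, w(t_{i-1}^+)\rangle = \mathcal R^i_{\mathrm{jump}}(w)$.
\item Summing over $i$ gives \eqref{eq:primal-residual-explicit}.
\end{enumerate}
Because $w\in X$ is continuous, $w(t_{i-1}^+)=w(t_{i-1})$. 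The one-sided notation is kept only so that the same formula applies later with $w = z-\p_k \in Z$, which may be discontinuous.

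The main obstacle is not the algebra but the placement of the jump term. One has to make sure it sits at the left endpoint $t_{i-1}$ with weight $x_k^i - x_k^{i-1}$, consistent with the intervals $\kIi=(t_i,t_{i+1}]$ and the index shift used in \eqref{eq:primal-step}. One also has to confirm that the first jump uses $x_k^0=x_{1,0}$, so that the initial condition is enforced weakly. I would settle this by checking the case $w=\p_k$ against \eqref{eq:primal-step}, as above. If the indices line up there, they line up in general.
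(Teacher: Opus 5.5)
Your proposal is correct and follows essentially the same route as the paper. The paper also uses $\rho=-\mathcal{A}$, notes that $\dot x_k=0$ on each open interval, and reads $E_{11}\dot x_k$ distributionally as $\sum_{i} E_{11}[x_k]_i\,\delta_{t_{i-1}}$, which is exactly the upwind nodal term you build into your broken form. Your consistency check against \eqref{eq:primal-step} is a useful addition, since the paper's indexing of $I_i$ does shift between sections; just note that a symmetric mollification recovers $w(t_{i-1}^+)$ only for continuous $w$, which is all the lemma requires.
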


\begin{proof}
By definition,
$\rho(x_k)(w)
= -\mathcal{A}(x_k)(w)
= -\int_0^T \langle E_{11}\dot x_k + S\,x_k - F,\; w\rangle\,\diff t$.
On each open interval~$I_i = (t_{i-1}, t_i]$, $x_k$ is constant, so $\dot x_k = 0$
and the interior contribution reduces to
$\int_{I_i}\langle F - S\,x_k^i,\; w\rangle\,\diff t$.
The distributional time derivative of the piecewise constant function
$x_k$ is
$E_{11}\dot x_k = \sum_{i=1}^{N} E_{11}[x_k]_i\,\delta_{t_{i-1}}$,
where $\delta_{t_{i-1}}$ denotes the Dirac delta, that is, point evaluation at~$t_{i-1}$.  Testing
against the continuous function $w$ produces the nodal
terms $\langle E_{11}[x_k]_i,\, w(t_{i-1})\rangle$.  Collecting all
contributions with the appropriate signs
yields~\eqref{eq:primal-residual-components}.
\end{proof}

\begin{lemma}[Dual residual]\label{lem:explicit-dual}
For any $v \in X$, the dual residual in~(\ref{eq:residuals-def})
decomposes as
\begin{gather}\label{eq:dual-residual-explicit}
  \rho^*(x_k, z_k)(v)
  = \sum_{i=1}^{N} \mathcal R_{\mathrm{int}}^{*,i}(v)
  + \sum_{i=2}^{N} \mathcal R_{\mathrm{jump}}^{*,i}(v)
  + \mathcal R_{\mathrm{bdy}}^{*}(v),
\\
  \text{with} \qquad \mathcal R_{\mathrm{int}}^{*,i}(v)
    := \int_{t_{i-1}}^{t_{i}}
           \bigl\langle \mathbf{f}_z(t) - S^\top z_k^i,\;
             v(t) \bigr\rangle\,\diff t,
             \qquad
  \mathcal R_{\mathrm{jump}}^{*,i}(v)
    := \bigl\langle
           E_{11}(z_k^{i-1} - z_k^i)
           - \mathbf{j}_z(t_{i-1}),\;
           v(t_{i-1}) \bigr\rangle,
  \quad\, \mbox{}
  \nonumber
  \\
  \mathcal R_{\mathrm{bdy}}^{*}(v)
    := \bigl\langle
           \mathbf{b}_z(T) - E_{11}\, z_k^{N},\;
           v(T) \bigr\rangle,
   \label{eq:dual-residual-components}
\end{gather}
where $\mathbf{f}_z$, $\mathbf{j}_z$, and $\mathbf{b}_z$ are the
continuous adjoint right-hand side terms defined
in~\eqref{eq:adjoint-sources}.
\end{lemma}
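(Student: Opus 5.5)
The plan mirrors the proof of Lemma~\ref{lem:explicit-primal}, applied to the dual residual $\rho^*(x_k,z_k)(v) = \mathcal J'(x_k)(v) - \mathcal{A}'(x_k)(v,z_k)$ with $v\in X$ continuous and $z_k\in X_k$ piecewise constant. First I would treat the bilinear part $\mathcal{A}'(x_k)(v,z_k)$ from \eqref{eq:bilinear-form}. On each $I_i=(t_{i-1},t_i]$ the adjoint iterate $z_k$ is the constant $z_k^i$. Hence
\[
\int_{t_{i-1}}^{t_i}\langle E_{11}\dot v, z_k^i\rangle\,\diff t
= \langle E_{11}z_k^i, v(t_i)\rangle - \langle E_{11}z_k^i, v(t_{i-1})\rangle,
\]
using the symmetry of $E_{11}$. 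For the $S$-term I would write $\langle S v, z_k^i\rangle = \langle v, S^\top z_k^i\rangle$. This is exactly the integration by parts used in the proof of Lemma~\ref{lem:adjoint-strong}, now with $\dot z_k = 0$ in the interior.

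The second step is the summation by parts over $i=1,\dots,N$, collecting the coefficients of each nodal value $v(t_j)$. At an interior node $t_{i-1}$ with $i=2,\dots,N$, the coefficient is $\langle E_{11}(z_k^{i-1}-z_k^i), v(t_{i-1})\rangle$. At $T$ the coefficient is $\langle E_{11}z_k^N, v(T)\rangle$, and at $t_0=0$ it is $-\langle E_{11}z_k^1, v(0)\rangle$. Next, I would take the expansion \eqref{eq:adjoint-RHS-expanded} of $\mathcal J'(\cdot)(v)$, evaluated at $x_k$, and rewrite it in terms of the continuous source terms $\mathbf f_z$, $\mathbf j_z$ and $\mathbf b_z$ of \eqref{eq:adjoint-sources}. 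That expansion consists of the distributed part $\int\langle\mathbf f_z,v\rangle$, the nodal parts $\langle\mathbf j_z(t_{i-1}),v(t_{i-1})\rangle$ up to sign, and $\langle\mathbf b_z(T),v(T)\rangle$. Subtracting the bilinear part node by node and interval by interval gives $\mathcal R^{*,i}_{\mathrm{int}}$, $\mathcal R^{*,i}_{\mathrm{jump}}$ and $\mathcal R^*_{\mathrm{bdy}}$ directly.

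The main obstacle is sign and index bookkeeping, and two points need care. The first is the sign convention for the jumps: \eqref{eq:adjoint-RHS-expanded} carries $\mathcal G_{i-1}-\mathcal G_i$, whereas \eqref{eq:jz-def} uses $\mathcal G_i-\mathcal G_{i-1}$. I have to check that with $E_{11}[z]=\mathbf j_z$ the difference appears as $E_{11}(z_k^{i-1}-z_k^i)-\mathbf j_z(t_{i-1})$, and if necessary interpret $\mathbf j_z$ consistently with how it enters the weak form. The second is the node $t_0=0$. There the term $-2\mathcal G_0\langle E_{11}x_k(0),v(0)\rangle$ and the term $-\langle E_{11}z_k^1,v(0)\rangle$ remain. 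No such term appears in the statement, so I would argue that it is dropped: test functions for the adjoint satisfy $v(0)=0$, since the primal initial value is prescribed and the error $x_1-\psi_k$ can be taken to vanish at $0$ when $\psi_k(0^-)=x_{1,0}$. Alternatively, the $t_0$ contribution can be absorbed into the interior term of the first interval. Everything else is routine.
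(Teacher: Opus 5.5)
Your route is the paper's own: integrate $\langle E_{11}\dot v, z_k^i\rangle$ by parts on each interval (with $\dot z_k=0$), collect the nodal coefficients of $\mathcal A'(x_k)(v,z_k)$, and subtract them from the expansion \eqref{eq:adjoint-RHS-expanded} of $\mathcal J'(x_k)(v)$.

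Your interior-node coefficient $\langle E_{11}(z_k^{i-1}-z_k^{i}),v(t_{i-1})\rangle$ is correct. For $N=2$, the sum $\langle E_{11}(v(t_1)-v(0)),z_k^1\rangle+\langle E_{11}(v(T)-v(t_1)),z_k^2\rangle$ has $v(t_1)$-coefficient $E_{11}(z_k^1-z_k^2)$. Because it is correct, your last step, ``subtracting \ldots{} gives $\mathcal R^{*,i}_{\mathrm{jump}}$ directly'', fails. By \eqref{eq:adjoint-RHS-expanded} and \eqref{eq:jz-def}, the coefficient of $v(t_{i-1})$ in $\mathcal J'(x_k)(v)$ is $-\mathbf j_z(t_{i-1})$. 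The subtraction therefore produces
\[
\bigl\langle E_{11}(z_k^{i}-z_k^{i-1})-\mathbf j_z(t_{i-1}),\,v(t_{i-1})\bigr\rangle
=\bigl\langle E_{11}[z_k]_{i-1}-\mathbf j_z(t_{i-1}),\,v(t_{i-1})\bigr\rangle .
\]
This is exactly the defect in the jump condition $E_{11}[z]_i=\mathbf j_z(t_i)$ of Lemma~\ref{lem:adjoint-strong}. The stated $\mathcal R^{*,i}_{\mathrm{jump}}$ differs from it by $2\langle E_{11}(z_k^{i-1}-z_k^{i}),v(t_{i-1})\rangle$.

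The paper reaches the printed sign only through a slip: its displayed expansion of $\mathcal A'(x_k)(v,z_k)$ writes the interior-node term as $E_{11}(z_k^{i}-z_k^{i-1})$. Your fallback of reinterpreting $\mathbf j_z$ cannot close the gap. The sign of $\mathbf j_z$ relative to $\mathcal J'$ is fixed by \eqref{eq:jz-def}. Flipping it would only turn the stated expression into the negative of the correct one, and the interior and boundary terms already fix the overall sign of $\rho^*$. What your computation proves is the decomposition with the corrected jump term, and you should state it that way rather than assert the printed formula.

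The node $t_0$ is a second discrepancy, and here too the paper's proof is silent. For general $v\in X$ the residual contains the extra term $\langle E_{11}z_k^1-2\mathcal G_0(x_k)\,E_{11}x_k(0),\,v(0)\rangle$. This term does not vanish in general, since nothing in the recursion \eqref{eq:dual-step} enforces it.

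Restricting to test functions with $v(0)=0$ is the right repair. It is also implicitly how Lemma~\ref{lem:adjoint-strong} avoids an extra condition at $t=0$. But it amends the hypothesis ``for any $v\in X$''; it is not something you can derive. Your remark about $x_1-\psi_k$ concerns how the lemma is later used in Theorem~\ref{thm:error-rep}, not the lemma itself. Absorbing the $t_0$ contribution into $\mathcal R^{*,1}_{\mathrm{int}}$ is not available either, because that term is a fixed integral against $v$ and cannot reproduce a point value $v(0)$.
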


\begin{proof}
By definition,
$\rho^*(x_k, z_k)(v) = \mathcal J'(x_k)(v) - \mathcal{A}'(x_k)(v, z_k)$.
We inspect the derivative term inside $\mathcal{A}'(x_k)(v, z_k)$
(cf.~\eqref{eq:bilinear-form}). 
Since $z_k$ is piecewise 
constant, $\dot z_k = 0$, on each $\kIi$, the integration by parts
produces the interior term
$\langle S\,v, z_k^i\rangle = \langle v, S^\top z_k^i\rangle$
together with boundary terms at the nodes.
Collecting terms together and using the continuity of $v$ at the nodes gives
\begin{align*}
  \mathcal{A}'(x_k)(v, z_k)
  &= \sum_{i=1}^{N}
       \int_{I_i} \bigl\langle v,\, S^\top\! z_k^i \bigr\rangle\,\diff t
   + \bigl\langle v(T),\, E_{11}\, z_k^{N} \bigr\rangle
   - \bigl\langle v(0),\, E_{11}\, z_k^1 \bigr\rangle
   + \sum_{i=2}^{N}
       \bigl\langle v(t_{i-1}),\,
         E_{11}(z_k^i \!-\! z_k^{i-1}) \bigr\rangle.
\end{align*}
Subtracting this equality 
from $\mathcal J'(x_k)(v)$
(expanded in~\eqref{eq:adjoint-RHS-expanded}) and grouping the terms
yields~\eqref{eq:dual-residual-components}.  
\end{proof}

\begin{remark}
\label{rem:dual-residual-role}
The dual residual $\rho^*$ measures how well the discrete adjoint
$z_k$ satisfies the continuous adjoint equation.  In the standard DWR
simplification (see below Section~\ref{sec:computable-estimator}), this term is
neglected.  This is justified because the dual residual involves the
product of the adjoint discretization error and the primal
interpolation error, making it higher-order relative to the primal
residual.
\end{remark}

\subsection{Computable error estimator and goal-oriented adaptive algorithm}
\label{sec:computable-estimator}

The error identity~\eqref{eq:DWR-identity} involves the unknown exact
solutions $x_1$ and~$z$.  We apply three standard
ramifications~\cite{Rannacher2003, meidner2008} to obtain a computable quantity.

\paragraph{Step~1: Neglect cubic remainder.}
The term $\mathcal R^{(3)} = O(\|e\|_X^3)$ is asymptotically negligible
for sufficiently fine time meshes.

\paragraph{Step~2: Drop dual residual.}
The dual residual is
of higher order than the primal residual.  Neglecting it yields
\begin{equation}\label{eq:simplified-estimate}
  \mathcal J(x_1) - \mathcal J(x_k)
  \approx \rho(x_k)(z - z_k).
\end{equation}

\paragraph{Step~3: Approximate adjoint error weight.}
Following~\cite{EndtmayerLangerWick2020}, the exact adjoint error $z - z_k$
is replaced by 
\begin{equation}\label{eq:weight-function}
  w_k(t) := \tilde z_k(t) - z_k(t),
\end{equation}
where $\tilde z_k(t)$ is a 
piecewise linear ({and globally} continuous) interpolant with nodal values 
\begin{equation}\label{eq:adjoint-reconstruction}
  \tilde z_k(t_i) :=
  \begin{cases}
    z_k^1
      & \text{if } i = 0,\\[3pt]
    \tfrac12\bigl(z_k^{i} + z_k^{i+1}\bigr)
      & \text{if } 1 \leq i \leq N{-}1,\\[3pt]
    0
      & \text{if } i = N.
  \end{cases}
\end{equation}

Substituting the weight~\eqref{eq:weight-function} into the primal
residual~\eqref{eq:primal-residual-explicit} yields the computable
indicator. 
{For the nodal values of $w_k$, we obtain}

\begin{equation}\label{eq:weight-nodal}
  w_k(t_{i-1}^+)
  = \tilde z_k(t_{i-1}) - z_k^i
  = -\tfrac12\,\Delta z_i,
  \qquad
  w_k(t_{i}^-)
  = \tilde z_k(t_{i}) - z_k^i
  = \tfrac12\,\Delta z_{i+1},
\end{equation}
where $\Delta z_j := z_k^j - z_k^{j-1}$ denotes the adjoint jump at
node $t_{j-1}$ (with the convention $\Delta z_1 = 0$ from the boundary
rule at $i = 0$, so that $w_k(t_0^+) = 0$).

\begin{definition}[Local error indicator]\label{def:indicator}
The \emph{signed} error indicator on interval $I_i$ is
\begin{equation}\label{eq:computable-indicator}
  \eta_i
  :=
       \mathcal R_{\mathrm{int}}^i(w_k)
     + \mathcal R_{\mathrm{jump}}^i(w_k).
\end{equation}
The global estimator and the refinement indicator are
\begin{equation}\label{eq:global-and-marking}
  \eta_{\mathrm{tot}}
  := \Bigl|\sum_{i=1}^{N} \eta_i\Bigr|,
  \qquad
  \bar\eta_i := |\eta_i|.
\end{equation}
\end{definition}

\begin{remark}[Signed vs.\ absolute indicators]
  The global stopping criterion 
  $\eta_{\mathrm{tot}} = |\sum_i \eta_i|$  
  respects the error
  identity~\eqref{eq:DWR-identity} and permits cancellation {of} 
  positive and negative $\eta_i$. 
  Using
  $\sum_i |\eta_i|$ instead would inflate the estimator.  The absolute
  values~$\bar\eta_i = |\eta_i|$ 
  are solely {needed} for D\"orfler marking, where the goal is to identify the
  intervals with the largest error contribution regardless of sign.
\end{remark}

\paragraph{Algebraic evaluation and adaptive refinement algorithm.}
Since $w_k$ is linear on $I_i$, each component
of~\eqref{eq:computable-indicator} can be evaluated explicitly.
For the interior term, $\mathcal{R}^i_{\mathrm{int}}$ \eqref{eq:primal-residual-components},
the constant part
$-S\,x_k^i$ integrates exactly against the linear weight:
\begin{equation}\label{eq:constant-part}
  \int_{I_i}\bigl\langle -S\,x_k^i,\; w_k(t)\bigr\rangle\,\diff t
  = \bigl\langle -S\,x_k^i,\;
      \tfrac{k_i}{2}\bigl(w_k(t_{i-1}^+) + w_k(t_{i}^-)\bigr)
    \bigr\rangle
  = \bigl\langle -S\,x_k^i,\;
      \tfrac{k_i}{4}(\Delta z_{i+1} - \Delta z_i)
    \bigr\rangle.
\end{equation}
The right-hand side $F(t) = (B_1 - A_{12}A_{22}^{-1}B_2)\,u(t)$
contribution is evaluated using
the interval average
$\bar F_i := k_i^{-1}\int_{I_i}F(t)\,\diff t$ (or an equivalent
low-order quadrature).
With this approximation,
\begin{equation}\label{eq:interior-evaluated}
  \mathcal R_{\mathrm{int}}^i(w_k)
  \approx \bigl\langle \bar F_i - S\,x_k^i,\;
    \tfrac{k_i}{4}(\Delta z_{i+1} - \Delta z_i)
  \bigr\rangle.
\end{equation}
The jump term, $\mathcal{R}^i_{\mathrm{jump}}$ \eqref{eq:primal-residual-components}, is evaluated at $t_{i-1}^+$ using the
nodal value from~\eqref{eq:weight-nodal}:
\begin{equation}\label{eq:jump-evaluated}
  \mathcal R_{\mathrm{jump}}^i(w_k)
  = -\bigl\langle E_{11}[x_k]_i,\; w_k(t_{i-1}^+) \bigr\rangle
  = \tfrac12\bigl\langle E_{11}(x_k^i - x_k^{i-1}),\;
      \Delta z_i \bigr\rangle.
\end{equation}
Combining~\eqref{eq:interior-evaluated}
and~\eqref{eq:jump-evaluated}, the indicator takes the algebraic form
\begin{equation}\label{eq:algebraic-indicator}
  \boxed{\eta_i
  =
      \bigl\langle
        \bar F_i - S\, x_k^i,\;
        \tfrac{k_i}{4}(\Delta z_{i+1} - \Delta z_i)
      \bigr\rangle
    + \tfrac12\bigl\langle
        E_{11}(x_k^i - x_k^{i-1}),\;
        \Delta z_i
      \bigr\rangle
    + O\bigl(k_i^2\bigr).}
\end{equation}
The first term measures the mismatch between the right-hand side and
the discrete operator, weighted by the adjoint sensitivity.  The second
term penalizes large state jumps in regions of high adjoint sensitivity.
Together, refinement is concentrated on intervals where the
discretization error and the sensitivity of the goal functional are
simultaneously large.

\begin{remark}[Simplified indicator]\label{rem:simplified-indicator}
If only the leading adjoint difference is retained (i.e.,
$\Delta z_{i+1} - \Delta z_i$ is replaced by $\Delta z_i$ and adjusting the prefactor), the indicator reduces to
\begin{equation}\label{eq:simplified-indicator}
  \eta_i
  \approx
    \tfrac{k_i}{2}\bigl\langle \bar F_i - S\,x_k^i,\;
      \Delta z_i \bigr\rangle
    + \bigl\langle E_{11}[x_k]_i,\; \Delta z_i \bigr\rangle.
\end{equation}
This is computationally cheaper and sufficient for driving refinement
in practice, as it retains the correct scaling and captures the dominant
contribution from the state jump.
\end{remark}

\begin{remark}
\label{rem:indicator-boundary}
By the boundary rule $\tilde z_k(t_0) = z_k^1$ in~\eqref{eq:adjoint-reconstruction}, the weight satisfies $w_k(t_0^+) = 0$, so the jump term~\eqref{eq:jump-evaluated} vanishes at the first interval ($i=1$). This is consistent with the fact that the initial condition is prescribed exactly ($x_k^{0} = x_{1,0}$): the state jump $x_k^1 - x_k^0$ reflects the mismatch between the initial data and the first computed step, but carries zero adjoint weight at~$t_0$.
At the last interval ($i = N$), the interior term~\eqref{eq:interior-evaluated} involves $\Delta z_{N+1}$. Since the adjoint is computed for $i = N{-}1, \dots, 1$ with terminal value $z_k^N = 0$, we extend the convention by setting $z_k^{N+1} := 0$, so that $\Delta z_{N+1} = 0 - z_k^N = 0$. Consequently, the interior contribution at $i = N$ reduces to $-\frac{k_N}{4}\langle \bar F_N - S\,x_k^N,\, \Delta z_N \rangle$, and the terminal source from the continuous adjoint, already absorbed into~$\mathbf{R}_N$, is not double-counted.
\end{remark}

Finally, the complete adaptive procedure is given in
Algorithm~\ref{alg:adaptivity}.

\begin{algorithm}[hbt]
\caption{Goal-oriented adaptivity for linear index-$1$ pH-DAEs}
\label{alg:adaptivity}
\begin{algorithmic}[1]
\Require initial grid $\mathcal T_0$, tolerance $\mathit{TOL}$,
         D\"orfler parameter $\theta \in (0,1)$.
\Loop
  \State
    \textbf{compute} $x_k$ via forward
    scheme~\eqref{eq:primal-step}. \hfill  \# primal solve
  \State
    \textbf{compute} $z_k$ via backward
    scheme~\eqref{eq:dual-step}. \hfill \# adjoint solve
  \State
    \textbf{evaluate} $w_k = \tilde z_k - z_k$ via 
    \eqref{eq:adjoint-reconstruction}. \hfill \# reconstruct
  \State
    \textbf{evaluate} signed indicators $\eta_i$
    \eqref{eq:algebraic-indicator}. \hfill  \# estimate
    \State
    \textbf{evaluate} global estimator
    $\eta_{\mathrm{tot}} = \bigl|\sum_{i=1}^{N} \eta_i\bigr|$.
  \If{$\eta_{\mathrm{tot}} \leq \mathit{TOL}$}
    \State \textbf{accept} the current solution and \textbf{stop}.
  \EndIf
  \State
    \textbf{select} a minimal set $\mathcal M$ satisfying
    $\sum_{I_i \in \mathcal M} |\eta_i|
     \geq \theta\sum_{i=1}^{N}|\eta_i|$.
    \hfill \# D\"orfler marking
  \State
    \textbf{bisect} all intervals in $\mathcal M$. \hfill \# refine
\EndLoop
\end{algorithmic}
\end{algorithm}

\begin{remark}
\label{rem:signed-vs-abs}
Notice, the global stopping criterion based on $\bigl|\sum_{i=1}^{N} \eta_i\bigr|$ respects the error identity~\eqref{eq:simplified-estimate}. 
The D\"orfler marking $|\eta_i|$ locates the intervals with the
largest magnitude of error contribution, regardless of signs. 
This
two-level strategy is standard practice in the DWR
literature~\cite{Rannacher2003, EndtmayerLangerWick2020}.
\end{remark}

Before we validate the goal oriented refinement numerically, we present an algorithmic enhancement via parallel computations and assess its properties.

\section{Parallel computation of adjoint approximations}

\label{sec:efficient_adjoint}

 The solution of the adjoint equation constitutes the main computational overhead of the proposed adaptive method. To alleviate the computational cost necessary for evaluation of the error indicators, we suggest a computationally cheap approximation of this adjoint system.  The backward recurrence~\eqref{eq:dual-step} resembles a backwards-in-time discrete-time system, hence intervals are coupled to the neighboring ones.  
In this part, we show that a \emph{strictly} dissipative structure of the pH-DAE allows for dropping the inter-interval coupling, enabling full parallelized computation of the adjoint. We show that due to the port-Hamiltonian structure, the Block-Jacobi iteration is a strict contraction. 

Intuitively, this parallelization property and block diagonalization leverages that error transport is not global, and it suffices to only consider neighboring subsystems. A similar stability property was used in \cite{grune2022efficient} to enable local refinements in predictive control algorithms.

We recall the linear system for the discrete adjoint problem  $\mathbf{A}^{\!\top} \mathbf{z} = \mathbf{R}$, which is explicitly stated in~\eqref{eq:adjoint-block}. 

Now, we decompose $\mathbf{A}^{\!\top} = \mathcal D - \mathcal U$, where
$\mathcal D := \mathrm{blockdiag}(M_0, \dots, M_{N-1})$ with $M_i:=E_{11}+k_i S^\top $ and
$\mathcal U$ contains the super-diagonal blocks $E_{11}$.  The
\emph{Block-Jacobi iteration} for  $\mathbf{Z}^{(l)}$, $l=0,1,\dotsc$, reads
\begin{equation}\label{eq:jacobi-iteration}
  \mathcal D\,\mathbf{Z}^{(\ell+1)}
  = \mathcal U\,\mathbf{Z}^{(\ell)} + \mathbf{R},
  \qquad \mathbf{Z}^{(0)} = \mathbf{0}.
\end{equation}
This decouples across all intervals: each local system
$M_i\, z_k^{i,(\ell+1)} = E_{11}\, z_k^{i+1,(\ell)} + \mathbf{R}_i$
($i = 0,\,\dotsc,\, N{-}1$) can be solved independently and hence in parallel.

A single, first sweep ($\ell = 0$) 
corresponds to solving the block-diagonal
system $\mathcal D\,\hat{\mathbf{Z}} = \mathbf{R}$.

Dropping generally the coupling $\mathcal U$ amounts to neglecting the backward
transport of adjoint information from future intervals.  
A fully dissipative pH system will ensure that this information decays exponentially, causing the successive iterates to converge rapidly.


\paragraph{Convergence analysis.}

The error in the Block-Jacobi iteration propagates backward through the
\emph{amplification matrix}
\begin{equation}\label{eq:Gamma-def}
  \Gamma_i := M_i^{-1}\, E_{11}
  = \bigl(E_{11} + k_i\, S^\top\bigr)^{-1}\, E_{11},
  \qquad i = 0,\dots,N{-}1.
\end{equation}

After $\ell$ sweeps, the Jacobi error will involve $\Gamma_i^{\,\ell}$. Thus, 
the iteration converges for arbitrary right-hand side terms
if and only if $\Gamma_i$ is a strict contraction.

\begin{proposition}
\label{prop:contraction}
Let Assumptions~\ref{ass:ph-structure} hold and assume assume that the smallest eigenvalue $\alpha$ of $\tilde S$ satisfies $\alpha > 0$.
Then for every $k_i > 0$ and every $v \in \R^r \setminus \{0\}$, $\Gamma_i$ is a
strict contraction in the $E_{11}$-norm, i.e., $\|\Gamma_i\, v\|_{E_{11}} < \|v\|_{E_{11}}$.
\end{proposition}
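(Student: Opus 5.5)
The plan is to prove the contraction with a direct energy argument: test the defining equation of $\Gamma_i v$ with the output itself, in the same way as in Step~1 and Step~2 of the proof of Lemma~\ref{lem:stability}. Fix $k_i>0$ and $v\in\R^r\setminus\{0\}$, and set $w:=\Gamma_i v$. By Remark~\ref{rem:adjoint-wellposed}, $M_i=E_{11}+k_iS^\top$ is invertible, so $w$ is well defined. It is characterized by
\begin{equation*}
  E_{11}\,w + k_i\,S^\top w = E_{11}\,v .
\end{equation*}
Since $E_{11}$ and $M_i$ are both invertible, $\Gamma_i$ is invertible, and therefore $w\neq 0$.

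Next, take the Euclidean inner product of this identity with $w$. The first term gives $\|w\|_{E_{11}}^2$. For the second term, the skew part of $S^\top$ drops out of the quadratic form, so
\begin{equation*}
  \langle S^\top w,w\rangle=\langle \widetilde S w,w\rangle\ge \alpha\|w\|^2 ,
\end{equation*}
where $\alpha>0$ is the smallest eigenvalue of $\widetilde S$ from \eqref{eq:Stilde-def}. On the right-hand side, the Cauchy--Schwarz inequality in the $E_{11}$-inner product (available because $E_{11}=E_{11}^\top>0$) yields $\langle E_{11}v,w\rangle\le \|v\|_{E_{11}}\|w\|_{E_{11}}$. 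Combining the three pieces gives
\begin{equation*}
  \|w\|_{E_{11}}^2 + k_i\,\alpha\,\|w\|^2 \le \|v\|_{E_{11}}\,\|w\|_{E_{11}} .
\end{equation*}

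To conclude, use $\|w\|^2\ge \|w\|_{E_{11}}^2/\lambda_{\max}(E_{11})$ and divide by $\|w\|_{E_{11}}>0$. This gives the quantitative bound
\begin{equation*}
  \Bigl(1+\frac{k_i\,\alpha}{\lambda_{\max}(E_{11})}\Bigr)\,\|\Gamma_i v\|_{E_{11}}\le \|v\|_{E_{11}} .
\end{equation*}
Since $k_i>0$ and $\alpha>0$, the prefactor is strictly larger than one, so $\|\Gamma_i v\|_{E_{11}}<\|v\|_{E_{11}}$. In fact the bound is uniform: $\|\Gamma_i\|_{E_{11}}\le \bigl(1+k_i\alpha/\lambda_{\max}(E_{11})\bigr)^{-1}<1$. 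A uniform bound of this kind is what is needed afterwards to control the powers $\Gamma_i^{\,\ell}$ in the Block-Jacobi error.

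The only delicate point is strictness. The argument must not rely on merely $\widetilde S\ge 0$, because with $\alpha=0$ one could have $\langle \widetilde S w,w\rangle=0$, and the estimate would only give $\le$. The hypothesis $\alpha>0$, together with $w\neq0$ (which follows from invertibility of $\Gamma_i$), is exactly what rules this out. I would also record that the non-symmetry of $S$ causes no difficulty, since only its symmetric part enters the quadratic form.
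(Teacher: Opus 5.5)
Your proof is correct and is essentially the paper's energy argument: the paper expands $\|v\|_{E_{11}}^2=\|w+(v-w)\|_{E_{11}}^2$ and substitutes $E_{11}(v-w)=k_iS^\top w$, which gives the exact identity $\|v\|_{E_{11}}^2=\|w\|_{E_{11}}^2+2k_i\langle\widetilde S w,w\rangle+\|v-w\|_{E_{11}}^2$. The only difference is that you bound $\langle E_{11}v,w\rangle$ by Cauchy--Schwarz instead; this drops the $\|v-w\|_{E_{11}}^2$ term but yields the explicit uniform factor $\|\Gamma_i\|_{E_{11}}\le\bigl(1+k_i\alpha/\lambda_{\max}(E_{11})\bigr)^{-1}$, which is a norm bound the paper never states, since Remark~\ref{rem:contraction-rate} only bounds the spectral radius.
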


\begin{proof}

Set $w := \Gamma_i\,v$. Then
$M_i\, w = E_{11}\, v$, i.e.,
$E_{11}(v - w) = k_i\, S^\top w$.
Expanding $\|v\|_{E_{11}}^2 = \|w + (v{-}w)\|_{E_{11}}^2$ gives
\begin{equation*}
  \|v\|_{E_{11}}^2
  = \|w\|_{E_{11}}^2
  + 2\,\langle E_{11}(v{-}w),\, w \rangle
  + \|v{-}w\|_{E_{11}}^2.
\end{equation*}
Substituting $E_{11}(v{-}w) = k_i\,S^\top w$ and using $\langle S^\top w, w\rangle = \langle \widetilde S\, w, w\rangle
\geq \alpha\,\|w\|^2 > 0$
yields
\begin{equation*}
  \|v\|_{E_{11}}^2
  = \|w\|_{E_{11}}^2
  + 2k_i\,\langle \widetilde S\, w, w\rangle
  + \|v-w\|_{E_{11}}^2 > \| w\|_{E_{11}}^2 = \|\Gamma_i v \|_{E_{11}}^2 ,
\end{equation*}
which is the contraction. 
\end{proof}
We 
comment on the central assumption of the above result: {$\lambda_{\min}(\widetilde{S})>0$.}

In this case, the dynamics are exponentially stable such that perturbations are exponentially damped. 
{Thus, }
the sensitivities are strongly localized, allowing for decoupled solution of the adjoint. We refer to \cite{grune2022efficient} for a similar result in the context of goal-oriented error estimation in optimal control problems on large time horizons.

\begin{remark}[Quantitative contraction rate]\label{rem:contraction-rate}
The spectral radius $\rho(\Gamma_i)$ of the iteration matrix $\Gamma_i$
describes the contraction rate of the Jacobi iteration.
Since $\Gamma_i = (E_{11} + k_i S^\top)^{-1} E_{11}$, any eigenvalue
$\lambda$ of $\Gamma_i$ satisfies $E_{11}\,v = \lambda(E_{11} + k_i S^\top)v$
for some $v \neq 0$, i.e.\ $(1-\lambda)\,E_{11}\,v = \lambda\,k_i\,S^\top v$.
Taking the inner product with $v$ and using
$\langle S^\top v, v\rangle = \langle \widetilde S\, v, v\rangle
+ \mathrm{i}\,\mathrm{Im}\langle S^\top v, v\rangle$ gives
\begin{equation}\label{eq:spectral-radius}
  |\lambda| = \frac{1}{|1 + k_i\sigma|} \leq \frac{1}{1 + k_i\,\mu_{\min}},
  \qquad
  \mu_{\min} := \lambda_{\min}(\widetilde S,\, E_{11}),
\end{equation}
where $\sigma$ is the corresponding generalised eigenvalue of $(S^\top, E_{11})$,
which satisfies $\mathrm{Re}(\sigma) \geq \mu_{\min} > 0$.
Equality holds when $S$ is symmetric; in general, the imaginary part of $\sigma$
provides additional contraction beyond the bound~\eqref{eq:spectral-radius}.
The upper bound decreases monotonically as $k_i$ increases: larger time steps
yield stronger contraction.
Figure~\ref{fig:spectral} validates~\eqref{eq:spectral-radius} numerically.
\end{remark}

\begin{figure}[t]
  \centering
  \includegraphics[width=\linewidth]{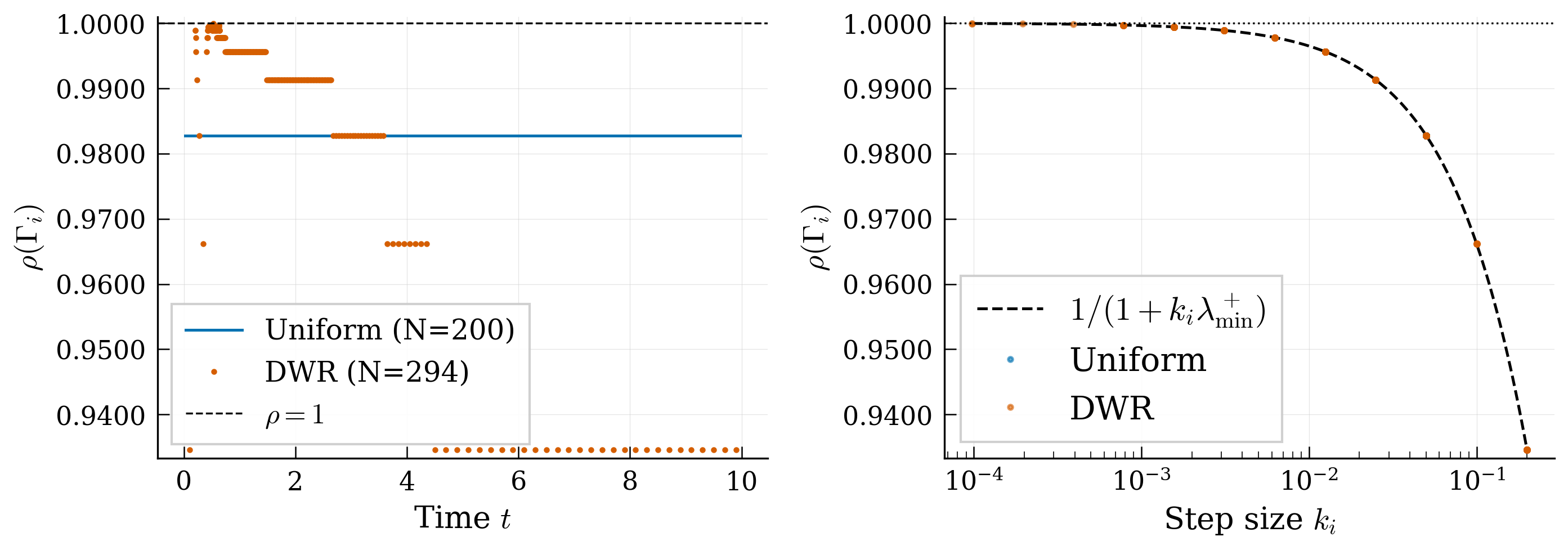}
  \caption{Spectral radius $\rho(\Gamma_i)$ of the amplification
    matrix~\eqref{eq:Gamma-def}, tested on \eqref{eq:tranmission-line-parameters-convergence}.
    \emph{Left:} distribution over $[0,T]$ on a uniform ($N=200$)
    and DWR-adaptive ($N=302$) grid; the adaptive grid places short
    intervals near $t=0$ where the input is active, resulting in
    $\rho(\Gamma_i)$ close to one in that region.
    \emph{Right:} $\rho(\Gamma_i)$ versus step size $k_i$;
    all values confirm the theoretical decay of
    $(1 + k_i\,\mu_{\min})^{-1}$.}
  \label{fig:spectral}
\end{figure}

\section{Numerical experiments}
\label{sec:numerical_experiments}

To numerically validate the error estimator and the efficiency of the adaptive
algorithm, we consider
two linear index-$1$
pH-DAEs: (a) an academic benchmark and (b) transmission line.  
The implementation uses the dG(0) discretization for both the
primal and adjoint problems and the algebraic error indicator derived in
Section~\ref{sec:DWR-analysis}. Furthermore, we demonstrate the effectivity of the parallel version.

At the end, we discuss and show numerically that the DWR-marked set stabilizes with rate that is significantly higher than the convergence rate of the adjoint

within the block Jacobi iteration.

\subsection{Academic example}

We consider the pH-DAE \eqref{eq:ph-dae} for $x(t)\in\mathbbm{R}^3$ with
\begin{equation}\label{eq:academic-example}
\begin{gathered}
    E = \text{diag}(1, 1, 0), \quad 
    R = \text{diag}(0.5,0.5,0.1), \quad
    Q = I_{3}, \quad 
    B = (1, 0, 0)^\top,
\\
  J = \begin{pmatrix} 0 & 1 & -1 \\ -1 & 0 & 0 \\ 1 & 0 & 0 \end{pmatrix}, \quad
    u(t) = \begin{cases} \sin(2\pi t) & \text{for } t < 0.5, \\ 0 & \text{for } t \geq 0.5. \end{cases}
\end{gathered}
\end{equation}
The simulation is performed on the time interval $I = [0, 1]$ with initial condition $x(0) = (1, 0, 0)^\top$.

The goal functional $\mathcal{J}$ on time grid $\mathcal{T}$ is the violation of the energy balance given by
(\ref{eq:J-def}--\ref{eq:Gi-splitting}). As reference solution, we use a uniformly grid with $N=20\,000$ time steps.
This solution yields a violation in $\mathcal{J}$ of approximately $1.25 \times 10^{-13}$.

\paragraph{Performance analysis of standard QoI.}

The focus is on three aspects of Algorithm~\ref{alg:adaptivity}:
(i)~convergence of the QoI,  
(ii)~decay of the global energy balance
violation, and (iii)~qualitative behavior of the adaptive time mesh
generated by the DWR estimator.

 Figure~\ref{fig:qoi_energy_convergence}~(a) shows the decay
of the QoI error as a function of the number of time steps.  Uniform
refinement converges at the expected rate.

DWR adaptivity achieves the same accuracy level with
roughly one order of magnitude fewer time steps across the full range
shown, indicating a significant reduction in the error constant.  This
behavior is explained by the adjoint weighting
in~\eqref{eq:simplified-indicator}: uniform refinement distributes
resolution evenly in time, whereas the DWR strategy selectively
refines those intervals that carry the largest contribution to the
leading-order error term in Theorem~\ref{thm:error-rep}, leaving
intervals with small adjoint-weighted residuals on a coarse mesh.

\begin{figure}[htb]
  \centering
  \begin{minipage}{0.48\textwidth}
    \centering
    \includegraphics[width=\linewidth]{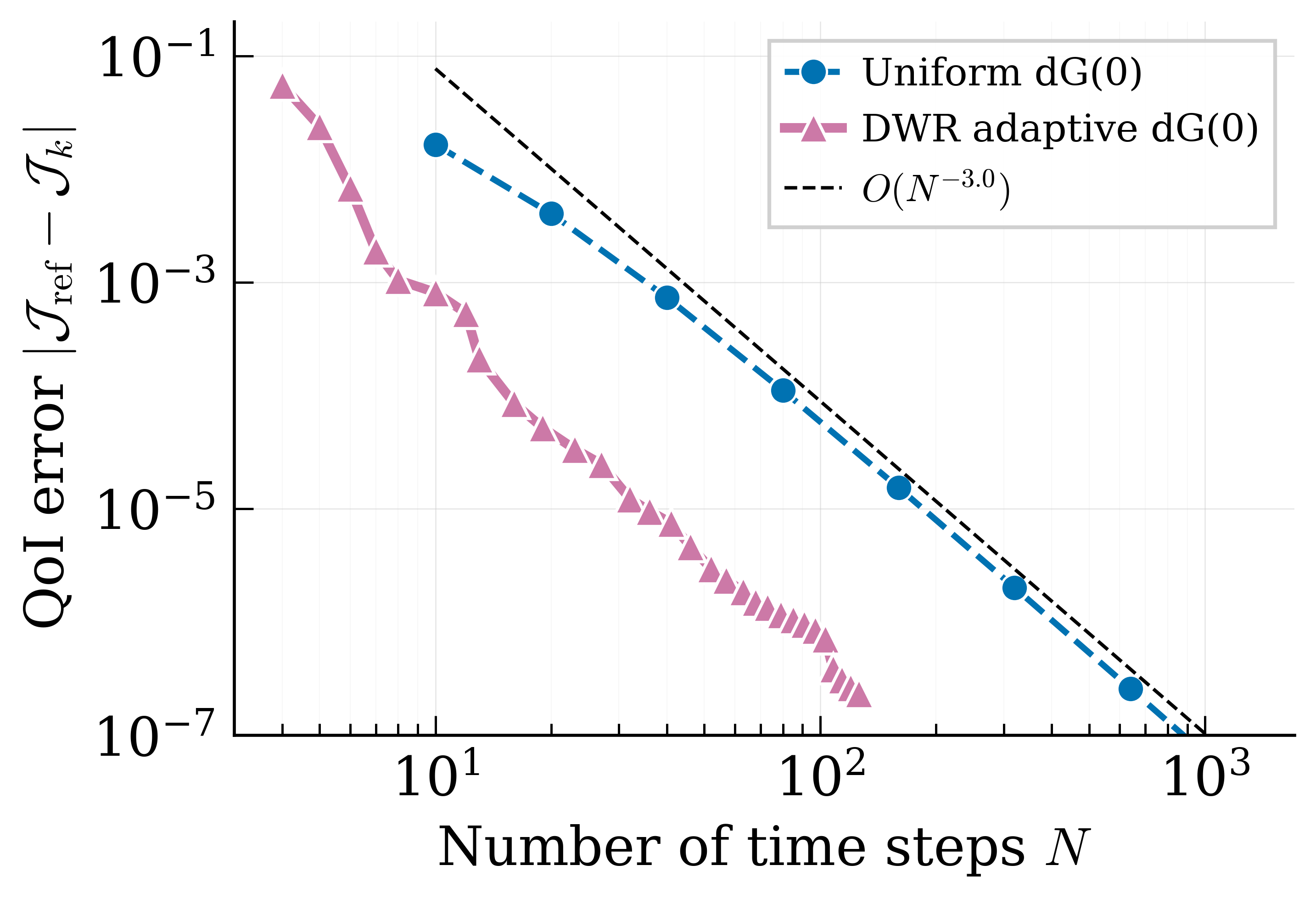}\\[2pt]
    (a) Decay of QoI error.
  \end{minipage}\hfill
  \begin{minipage}{0.48\textwidth}
    \centering
    \includegraphics[width=\linewidth]{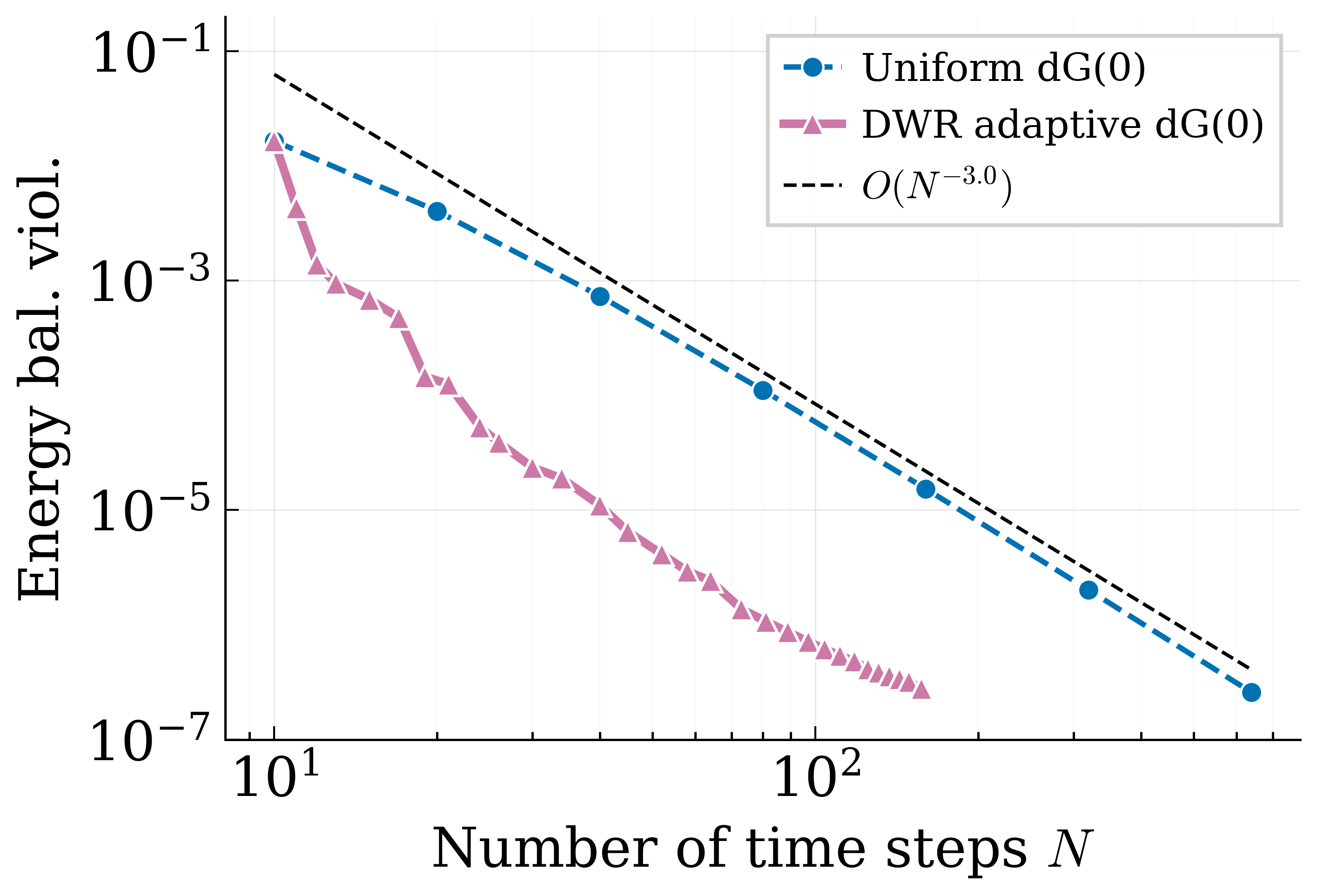}\\[2pt]
    (b) Decay of energy balance violation.
  \end{minipage}
  \caption{Convergence of uniform and DWR adaptive dG(0).
    (a)~Error in QoI 
    $|\mathcal J_{\mathrm{ref}} - \mathcal J_k|$ associated with the
    local energy balance ($J_{\mathrm{ref}}:=J (x_{\mathrm{ref}}$)).
    (b)~Global energy balance violation
    $\mathcal J(u_k) = \sum_i G_i^2$.
    The dashed line shows the 
    asymptotic rate $O(N^{-3})$,
    consistent with the dG(0) discretization.}
  \label{fig:qoi_energy_convergence}
\end{figure}

\medskip

\noindent \begin{minipage}[c]{0.58\textwidth}

\begin{remark}[Convergence rate of the QoI]\label{rem:qoi-rate}
The rate $\mathcal{J}(x_k) = O(k^3)$ follows from the structure of the
goal functional. By the a~priori estimate (Lemma~\ref{lem:apriori}),
$\|x_k - x_1\|_{L^2} = O(k)$, so each local residual satisfies
$G_i = O(k_i^2)$ (first-order state error integrated over an interval
of length~$k_i$). Since $\mathcal{J} = \sum_i G_i^2$, summing $N = T/k$
terms of size $O(k_i^4)$ gives $\mathcal{J} = O(k^3) = O(N^{-3})$.
\end{remark}

The adaptive refinement is illustrated in Figure~\ref{fig:mesh_evolution}. Starting from an initial uniform grid (iteration 0), the algorithm introduces additional time nodes predominantly near the initial time, where the system undergoes a rapid redistribution of energy from the prescribed initial condition.
In the course of refinements, a secondary time point of refinement emerges around $t = 0.45$. This clustering corresponds to the region where the energy residual $|G_i|$ and the adjoint sensitivity $\|z_i\|$ overlap most significantly. 
\end{minipage}
\hfill
\begin{minipage}[c]{0.39\textwidth}
\mbox{}\\[-\baselineskip]
  \centering
  \includegraphics[width=0.9\textwidth]{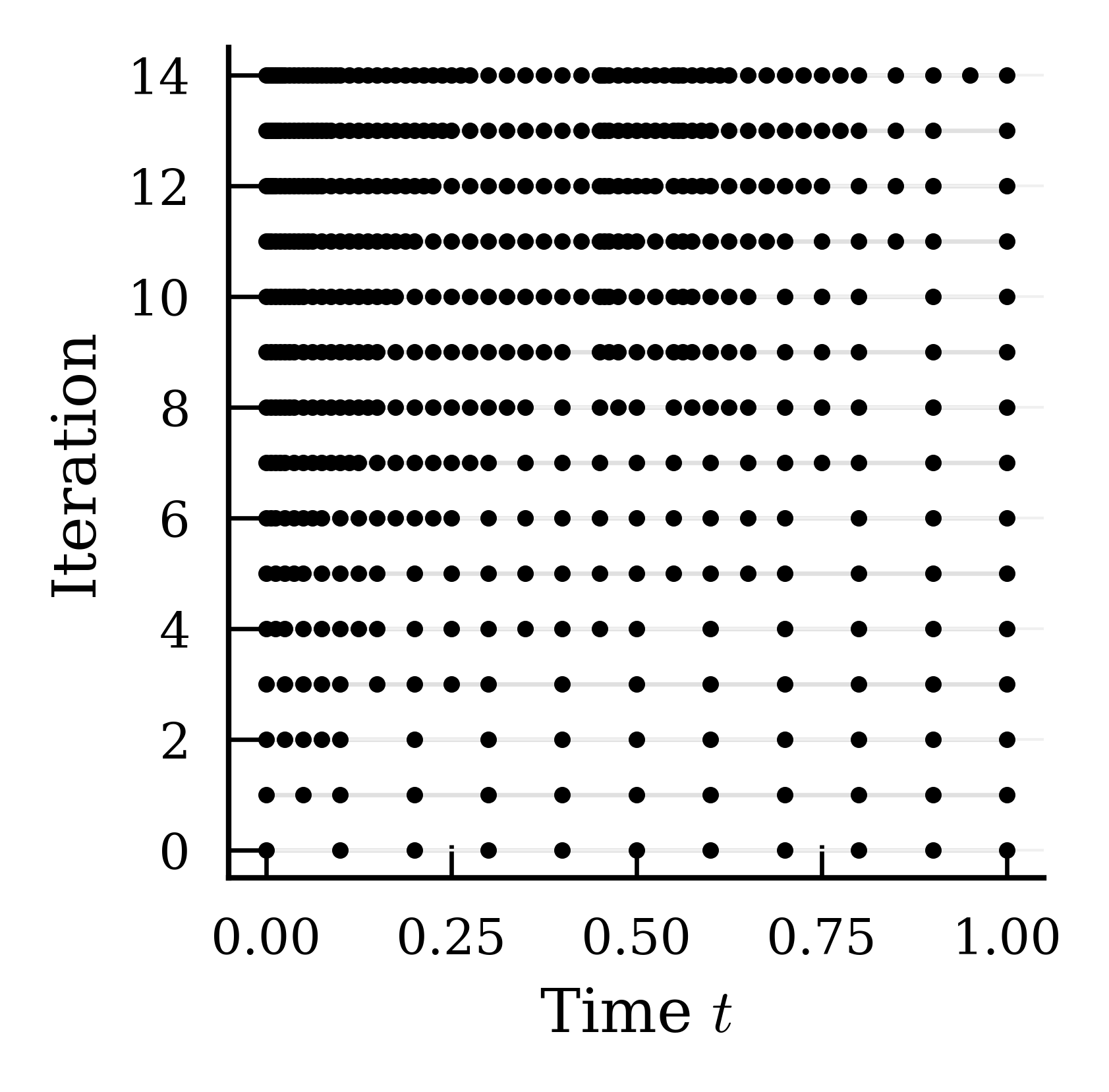}
  \\
  \refstepcounter{figure}  \label{fig:mesh_evolution}
  Figure~\ref{fig:mesh_evolution}: Evolution of time grid. 
\end{minipage}
\vspace{.05cm}

Importantly, the mesh remains relatively coarse for $t > 0.6$, confirming that once the system reaches a more dissipative state (cf. $u$ in \eqref{eq:academic-example}), the local energy balance is already well resolved without the need for additional degrees of freedom. 

 Figure~\ref{fig:adjoint_structure} illustrates the structure of the local error indicator. The energy residual $|G_i|$ is largest during the initial transient and exhibits a secondary peak near $t = 0.5$. Conversely, the adjoint sensitivity $\|z_i\|$ is characterized by a gradual decay from the initial time. Their product, shown in the right panel, exhibits a distinct profile that peaks near $t = 0$ and $t = 0.5$. This confirms that the adaptive algorithm refines precisely those intervals where the discretization error and the sensitivity of the goal functional are simultaneously large.

\begin{figure}[htb]
  \centering
  \includegraphics[width=0.95\textwidth]{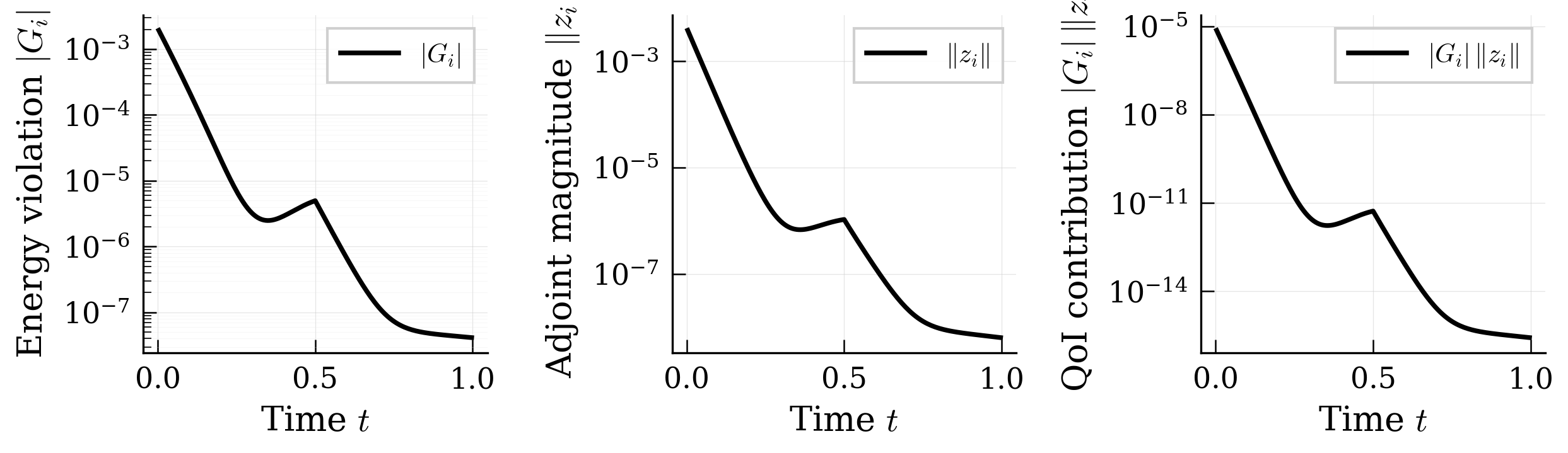}
  \caption{Structure of the DWR error indicator. Left: local energy residual magnitude $|G_i|$. Middle: adjoint sensitivity norm $\|z_i\}$. Right: weighted contribution $|G_i|\,\|z_i\|$, which drives adaptive refinement.}
  \label{fig:adjoint_structure}
\end{figure}

\subsection{Transmission line}

\label{sec:transmission-line}

We consider an RCL ladder transmission line from the port-Hamiltonian
benchmark collection~\cite{MoserBenchmark2021}, consisting of $n_s$
cascaded building blocks; see Fig.~\ref{fig:transmission_line}.

Using modified nodal analysis (MNA)~\cite{Freund2011}, the unknowns are the node voltages $e = (e_0,\dotsc,e_{2n_s})^\top$, the currents through inductances $\imath_L = (\imath_{L_1},\dotsc,\imath_{L_{n_s}})^\top$, and the current through the voltage-source $\imath_V$.

Using Kirchhoff's current law  at every node and adding branch equations for the inductance and voltage source, we obtain for the $k$th building block: ($k=1,\, \dotsc,\, n_s$)
\begin{alignat*}{2}
  \text{node $2k\!-\!1$:\quad}  && 0 & = \tfrac{1}{R} (e_{2k-2} -e_{2k-1}) - \imath_{L,k} \\
  \text{inductance:\quad}              &&L \dot{\imath}_{L,k} & = e_{2k-1} - e_{2k} \\
 \text{node $2k$:\quad}      &&C \dot{e}_{2k} &=  \imath_{L,k} - \tfrac{1}{R}(e_{2k}-e_{2k+1}) .
\end{alignat*}

Using branch related incidence matrices $\mathcal{A}_C,\mathcal{A}_L,\mathcal{A}_R,\mathcal{A}_V$ for capacitances, inductances, resistances
and voltage-source, 
we obtain a port-Hamiltonian model 
 of the benchmark circuit (Fig.~\ref{fig:transmission_line})
\begin{equation*}
  E\dot{x}(t) = (J - \mathcal{R})\,x(t) + B\,u(t), \qquad
  y(t) = B^\top x(t),
\end{equation*}
with unknown $x^\top(t) = (e^\top(t),\, \imath_L^\top(t),\, \imath_V^\top(t)) \in \R^{3n_s +2} $ and
\begin{equation}\label{eq:rcl-matrices}
  E = \begin{bmatrix} \mathcal{A}_C C \mathcal{A}_C^\top & 0 & 0 \\
                      0 & L & 0 \\ 0 & 0 & 0 \end{bmatrix}, \quad
  J = \begin{bmatrix} 0 & -\mathcal{A}_L & -\mathcal{A}_v \\
                      \mathcal{A}_L^\top & 0 & 0 \\
                      \mathcal{A}_V^\top & 0 & 0 \end{bmatrix}, \quad
  \mathcal{R} = \begin{bmatrix}
                      \mathcal{A}_R G \mathcal{A}_R^\top & 0 & 0 \\
                      0 & 0 & 0 \\ 0 & 0 & 0
                \end{bmatrix},
  \quad 
  B= \begin{bmatrix} 0 \\ 0 \\ 1\end{bmatrix}
\end{equation}
and matrices $C=\text{diag}(C_1,\dotsc, C_{n_s-1})$, 
$L=\text{diag}(L_1,\dotsc,L_{n_s})$, $G=\text{diag}(\frac{1}{R_0},\frac{1}{R_1},\dotsc,\frac{1}{R_{n_s+1}})$, see~\cite{MoserBenchmark2021}.

In fact, this is an index-1 system with algebraic equations for
nodes 0, $2k-1$ $(k=1,\, \dotsc,\, n_s)$, $2n_s$, and the branch equation for the voltage source.

Thus, the reduction with the Schur complement gives $r_{\mathrm{diff}}=2n_s -1$ differential equations. In our experiments, we set
\begin{equation*}
\begin{gathered}
  C_k = 1,\quad L_k = 1,
  \quad x(0) = (1,0,\dotsc,0)^\top, 
  \quad
  u(t) = 50\exp\bigl(-(t-0.5)^2/(2\cdot 0.05^2)\bigr). 
  \end{gathered}
  \end{equation*}
For the waveform study, we furthermore use
\begin{equation}\label{eq:transmission-line-parameters-waveform}
  n_s = 50, \quad R_k = 2, \quad [0,T] = [0,10],
\end{equation}
while for the convergence and adaptivity study we set
\begin{equation}\label{eq:tranmission-line-parameters-convergence}
n_s = 100, \quad R = 0.35,\quad 
\mathcal{A}_R G \mathcal{A}_R^\top + I_{n_s+1}
\quad [0,T]=[0,10].
\end{equation}

Adaptive refinement uses QoI~\eqref{eq:Iloc-global}, Dörfler parameter
$\theta = 0.5$, and an initial uniform grid of $N = 50$ intervals.
\begin{figure}[htbp]
  \centering
  \begin{circuitikz}[scale=0.77, transform shape,
      every node/.style={font=\small}]

    \draw (0.5,0) -- (7,0);

    \draw (0.5,0) node[ground]{};

    \draw (0.5,0) to[V, l=$u(t)$, invert] (0.5,3);
    \draw (0.5,3) -- (1.5,3);

    \draw[fill=black] (1.5,3) circle (0.07);
    \node[above=3pt] at (1.5,3) {\footnotesize $0$};

    \draw (1.5,3) to[R, l=$R_0$] (1.5,0);

    \draw (1.5,3) to[R, l=$R_1$] (4.0,3);
    \draw[fill=black] (4.0,3) circle (0.07);
    \node[above=3pt] at (4.0,3) {\footnotesize $1$}; 
    \draw (4.0,3) to[L, l=$L_1$] (6.5,3);

    \draw[fill=black] (6.5,3) circle (0.07);
    \node[above=3pt] at (6.5,3) {\footnotesize $2$}; 
    \draw (6.5,3) to[C, l=$C_1$] (6.5,0);

    \draw (6.5,3) -- (7.15,3);
    \draw (6.5,0) -- (7.15,0);

    \draw[dotted, very thick] (7.25,3) -- (8.,3);
    \draw[dotted, very thick] (7.25,0) -- (8.,0);

    \draw (8.,3) to[R, l=$R_{n_s\!-\!1}$] (10.5,3);
    \draw[fill=black] (10.5,3) circle (0.07);
    \node[above=3pt] at (10.5,3) {\footnotesize $2n_s\!-\!3$}; 
    \draw (10.5,3) to[L, l=$L_{n_s}\!-\!1$] (13.,3);

    \draw (13,3)--(13.1,3);

    \draw[fill=black] (13.1,3) circle (0.07);
    \node[above=3pt] at (13.1,3) {\footnotesize $2n_s\!-\!2$}; 
    \draw (13.1,3) to[C, l_=$C_{n_s - 1}$] (13.1,0);

    \draw (13.1,3) to[R, l=$R_{n_s}$] (15.6,3);
    \draw[fill=black] (15.6,3) circle (0.07);
    \node[above=3pt] at (15.6,3) {\footnotesize $2n_s\!-\!1$}; 
    \draw (15.5,3) to[L, l=$L_{n_s}$] (18.1,3);
    \draw[fill=black] (18.1,3) circle (0.07);
    \node[above=3pt] at (18.1,3) {\footnotesize $2n_s$}; 

    \draw (18.1,3) to[R, l=$R_{n_s+1}$] (18.1,0);

    \draw (8.0,0) -- (18.1,0);

    \draw[dashed, rounded corners=6pt, thick, gray]
        (8.35, -0.3) rectangle (13.6, 3.7);
    \node[gray, font=\footnotesize\itshape] at (11., 4.1)
        {building block};
\end{circuitikz}
  \caption{RCL ladder transmission line~\cite{MoserBenchmark2021,MoserPHDAE2023} with building block given in dashed box.
    The right end is terminated by load resistor~$R_{n_s+1}$ instead of a capacitance.}
  \label{fig:transmission_line}
\end{figure}

\begin{remark}[Coercivity of $\widetilde{S}$]%
\label{rem:coercivity}
We discuss the regularization $\mathcal{A}_R G \mathcal{A}_R^\top + 1 I_{n_s+1}$ in \eqref{eq:tranmission-line-parameters-convergence}.  

In fact, the Schur complement splits the dissipation into
two contributions. 
First, the inductor-currents are damped by the Schur term $J_{12}A_{22}^{-1}J_{21}$ (series resistor to the inducances)

Second, the node voltages have no resistive path to ground. Thus, they do not experience dissipation. This is attacked by the regularization term.

Physically, the regularization models a 
leakage conductance 
$\varepsilon=1$ at each capacitor.
\end{remark}

\paragraph{Physical behavior.}
Figure~\ref{fig:waveform} shows the node voltage $v_k(t)$ at six
representative nodes for the ideal benchmark ($n_s = 50$.

From node~0 the input (voltage source signal) propagates along the ladder:

successive nodes reach
their peak later and with smaller amplitude, reflecting the finite
propagation speed and cumulative resistive losses in the series
elements.
By node~26 the signal is heavily attenuated, with amplitude an order of
magnitude smaller than at node~1.
This pattern---localised transient activity concentrated in an early
time window that shifts rightward along the chain---is precisely the
structure that the DWR estimator detects and exploits for selective
temporal refinement.

\paragraph{Convergence and cost-to-target.}
Figure~\ref{fig:convergence} shows the energy-balance violation
$\sum_i G_i^2$ versus number of intervals $N$ for the dissipative
benchmark \eqref{eq:tranmission-line-parameters-convergence}.
Both uniform and DWR-adaptive dG(0) achieve the rate $O(N^{-3.0})$
in the asymptotic regime, consistent with the second-order primal
discretisation and the quadratic QoI.
The DWR curve lies substantially below the uniform curve throughout,
reflecting a reduced error constant: the estimator correctly identifies
the short window near $t = 0$ where the Gaussian pulse drives rapid
variation at node~1 as the primary source of the global energy defect,
concentrating refinement there while leaving the slowly varying
dissipative tail on a coarse grid.

Table~\ref{tab:cost_to_target} quantifies the interval savings:
We reach up to $89\%$ (for targets from $10^{-1}$ to $10^{-2}$).

The gain is superlinear in the target: as the tolerance tightens,
uniform refinement must reduce every interval to compensate for the
globally distributed error, whereas Dörfler marking continues to
concentrate work at 
intervals with larger adjoint-weighted
residuals.
In the asymptotic regime, both methods converge at the same rate, so
the DWR advantage is a reduced leading error constant.

\begin{figure}[t]
  \centering
  \begin{subfigure}[t]{0.48\linewidth}
    \centering
    \includegraphics[width=\textwidth,keepaspectratio]{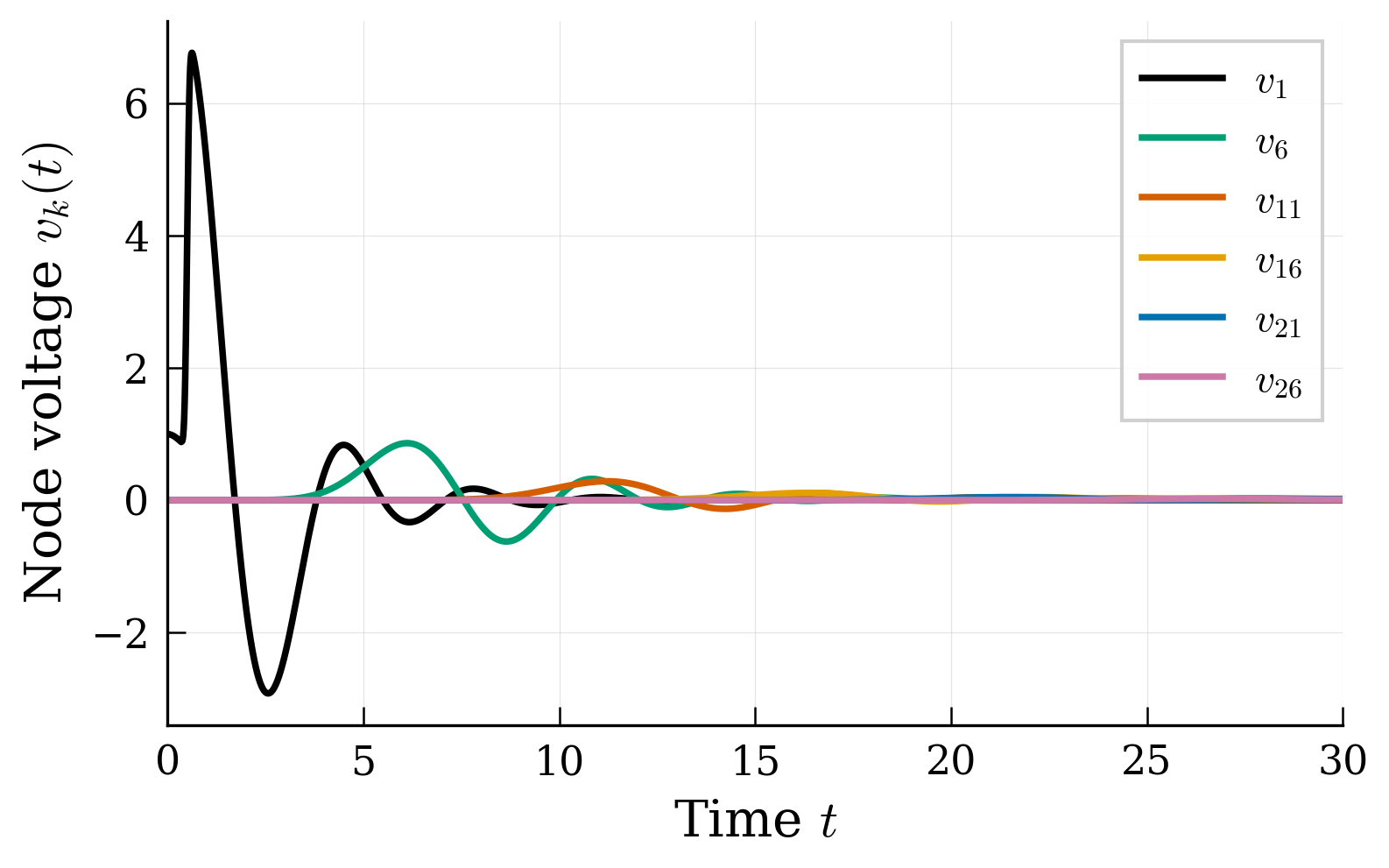}
    \caption{Node voltage $v_k(t)$ at representative nodes
             (ideal benchmark, $n_s = 50$).}
    \label{fig:waveform}
  \end{subfigure}
  \hfill
  \begin{subfigure}[t]{0.48\linewidth}
    \centering
    \includegraphics[width=\textwidth,keepaspectratio]{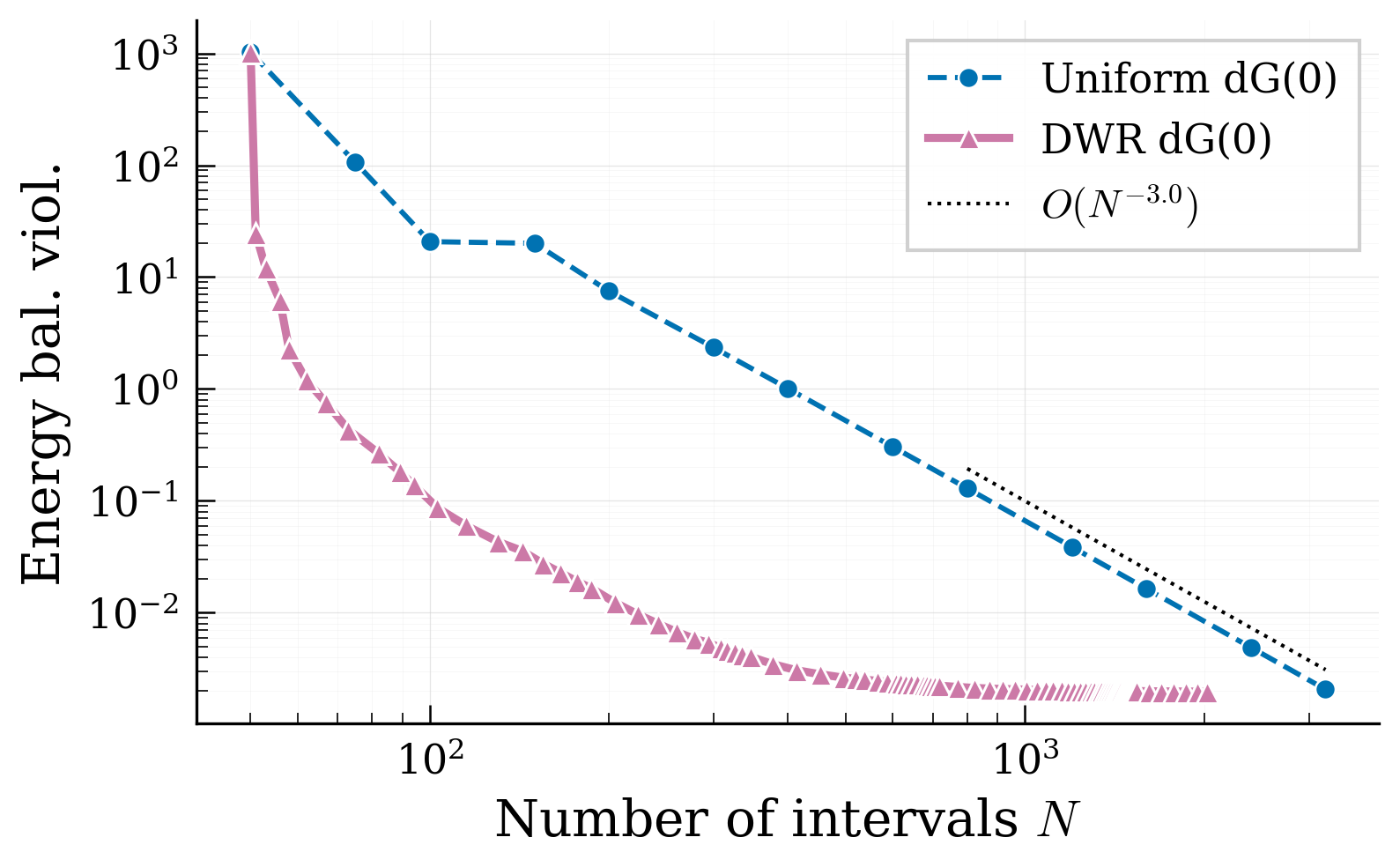}
    \caption{Energy-balance violation vs.\ intervals
             (dissipative benchmark, $n_s = 100$).}
    \label{fig:convergence}
  \end{subfigure}
  \caption{RCL ladder transmission line.
    \emph{Left}: node voltages at nodes 1, 6, 11, 16, 21, 26 on the
    final adaptive grid; the excitation propagates from left to right
    with increasing delay and decay of amplitude.
    \emph{Right}: energy-balance violation $\sum_i G_i^2$ under
    uniform~($\bullet$) and DWR-adaptive~($\blacktriangle$) dG(0)
    refinement; the dotted line gives the fitted rate
    $O(N^{-3.0})$.}
  \label{fig:waveform_convergence}
\end{figure}

\begin{table}[t]
  \centering
  \caption{Intervals required to reduce $\sum_i G_i^2$ below a
    prescribed target.
    Savings $= 1 - N_{\mathrm{DWR}}/N_{\mathrm{Uniform}}$.
    Dissipative benchmark with parameters~(\ref{eq:tranmission-line-parameters-convergence})
    .}
  \label{tab:cost_to_target}
  \begin{tabular}{c@{\qquad}r@{\qquad}r@{\qquad}c}
    \toprule
    Target & $N$ (Uniform) & $N$ (DWR) & Savings \\
    \midrule
    $10^{2}$  &    76\; &   51\; & 33\% \\
    $10^{1}$  &   184\; &   54\; & 71\% \\
    $10^{0}$  &   401\; &   64\; & 84\% \\
    $10^{-1}$ &   872\; &   101\; & 88\% \\
    $10^{-2}$ &  1887\; &  206\; & 89\% \\
    \bottomrule
  \end{tabular}
\end{table}


\begin{figure}[htbp]
  \centering
  \begin{minipage}{0.48\textwidth}
    \centering
    \includegraphics[width=\textwidth]{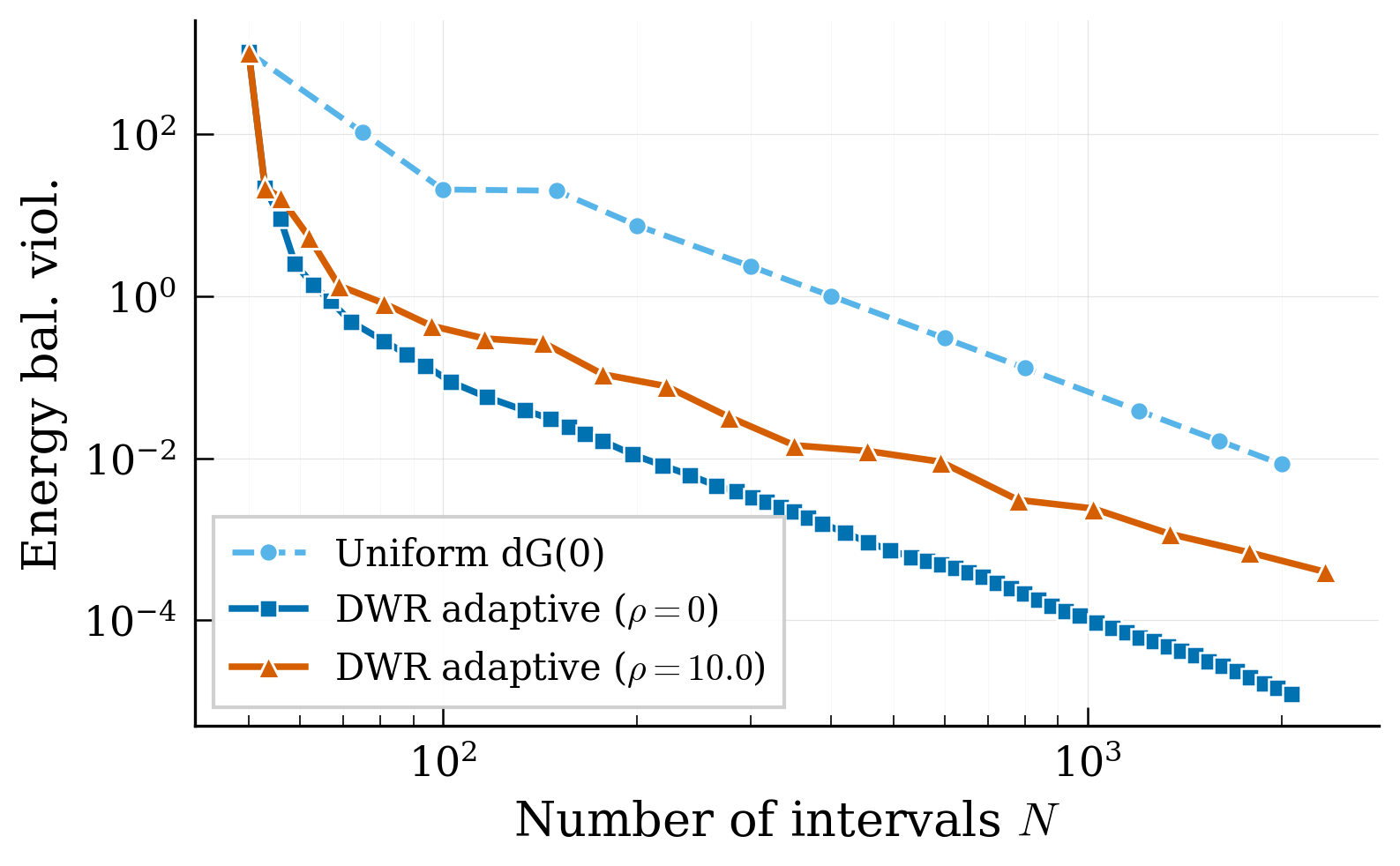}\\[4pt]
    (a) Energy-balance violation.
  \end{minipage}\hfill
  \begin{minipage}{0.48\textwidth}
    \centering
    \includegraphics[width=\textwidth]{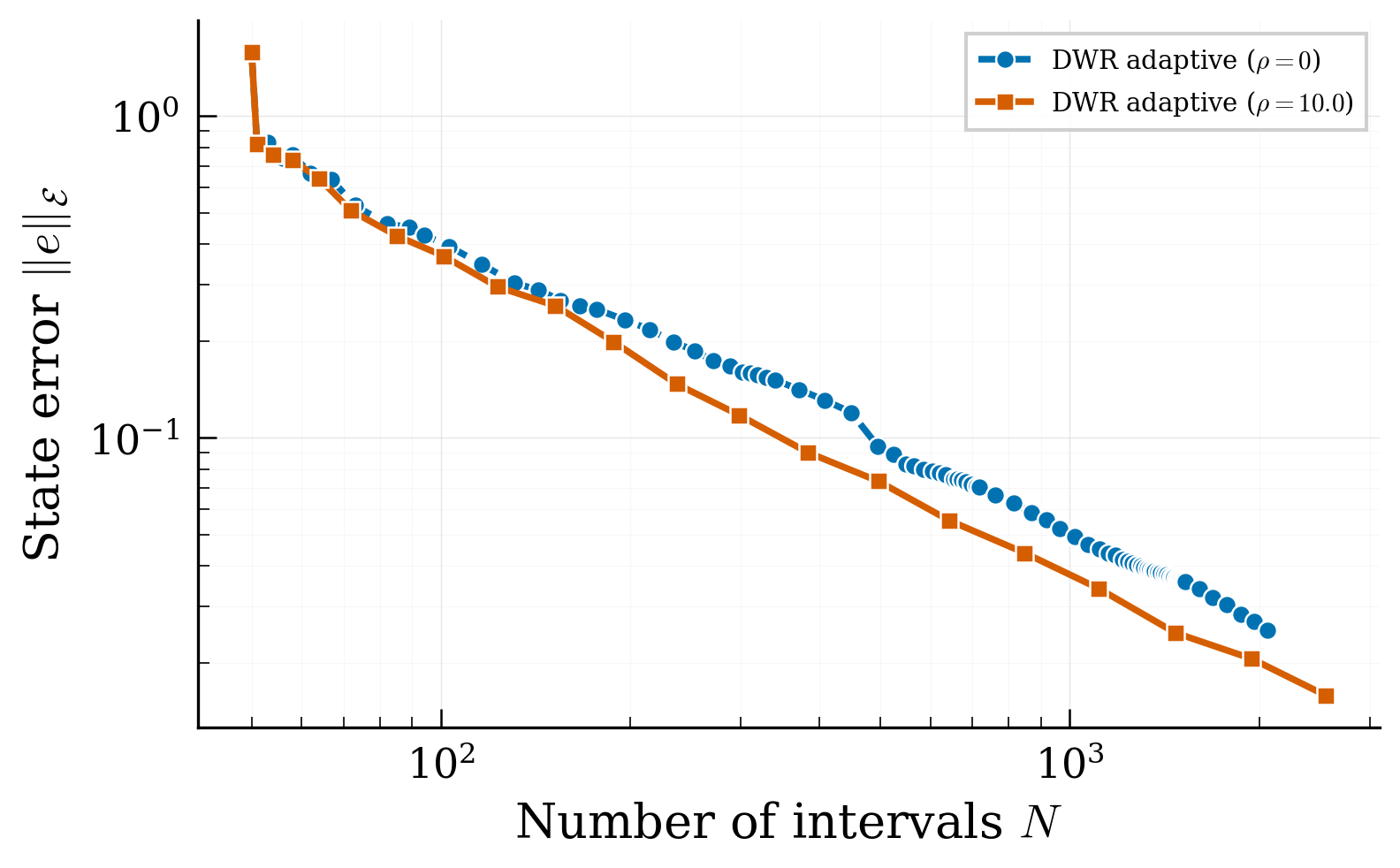}\\[4pt]
    (b) State error in the energy norm.
  \end{minipage}
  \caption{Effect of the Hamiltonian-weighted QoI on the transmission-line
    benchmark~\eqref{eq:tranmission-line-parameters-convergence} ($\rho=10$).
    (a)~Energy-balance violation $\sum_i G_i^2$ for uniform dG(0) and
    both adaptive strategies.
    (b)~State error $\|e\|_{\mathcal{E}}$ vs.\ number of intervals $N$.
    The weighted strategy trades a mild increase in energy-balance
    residual for a consistent reduction in state error throughout the
    full $N$-range.}
  \label{fig:weighted_qoi_norm}
\end{figure}

\paragraph{Norm-augmented QoI.}
We now consider the augmented QoI~\eqref{eq:weighted_qoi_norm} applied
to the regulairzed transmission-line benchmark~\eqref{eq:tranmission-line-parameters-convergence}.
The Hamiltonian integral is discretised as $\rho\sum_i k_i H(x_{i+1})$,
introducing an additional distributed source term $\rho\,k_i\,\nabla H(x_{i+1})$
in the adjoint right-hand side at every interval.
The adaptive strategy therefore balances two objectives simultaneously:
reduction of the local energy-balance residual and control of the state
energy throughout the trajectory.

Figure~\ref{fig:weighted_qoi_norm}~(a) shows that both adaptive strategies
substantially outperform uniform refinement.
The weighted functional ($\rho=10$) incurs a moderate penalty in the
energy-balance component relative to the 
pure energy case 
($\rho=0$):
for large $N$, the weighted curve lies roughly half an order of magnitude above the other case.
This is expected, since the DWR estimator redistributes temporal resolution to also improve the state errors.

The benefit is visible in Figure~\ref{fig:weighted_qoi_norm}~(b).
The two strategies track each other closely for $N \le 200$;

afterwards
the weighted curve pulls consistently below the other 
one, achieving roughly a factor of two improvement in state error at
$N = 2000$.
This trade-off, a mild degradation in energy-balance accuracy in
exchange for a uniform improvement in state accuracy, is the
expected consequence of augmenting the QoI with the Hamiltonian term.

\paragraph{Effectivity of DWR estimator.}
We assess the estimator from Section~\ref{sec:computable-estimator} on
the regularized transmission line model \eqref{eq:tranmission-line-parameters-convergence}. 

The respective Schur complement $S$ \eqref{eq:S-and-F-def} satisfies
$\|S - S^\top\|/\|S\| \approx 1.93$, reflecting the dominant
skew-symmetric (conservative) component of the interconnection.
A fine reference solution with $N = 50\,000$ uniform steps provides
$J_{\mathrm{ref}} = 5.434368 \times 10^{-7}$.
The adaptive loop starts from~$N = 49$ and proceeds via
Algorithm~\ref{alg:adaptivity} with D\"orfler parameter $\theta = 0.5$.

Table~\ref{tab:effectivity} records the effectivity index
\begin{equation}\label{eq:effectivity}
  I_{\mathrm{eff}}
  := \frac{\bigl|\sum_i \eta_i\bigr|}
          {\bigl|J_{\mathrm{ref}} - J(u_k)\bigr|}
\end{equation}
at selected refinement levels.

\begin{table}[htb]
  \centering
  \caption{DWR effectivity indices for the transmission line pH-DAE (for selected iterations).}
  \label{tab:effectivity}
  \begin{tabular}[t]{@{}rrccr@{}}
    \toprule
    $\ell$ & $N$ & $|J_{\mathrm{ref}} - J_k|$
    & $\sum \eta_i$ & $I_{\mathrm{eff}}$ \\
    \midrule
     0 &  49 & $1.0013\cdot 10^{+3}$ & $-5.5261\cdot 10^{+2}$ & 0.552 \\
     1 &  50 & $3.2829\cdot 10^{+1}$ & $-4.6589\cdot 10^{+1}$ & 1.419 \\
     3 &  54 & $6.7644\cdot 10^{+0}$ & $-2.9892\cdot 10^{+1}$ & 4.419 \\
     5 &  58 & $2.6419\cdot 10^{+0}$ & $-3.7683\cdot 10^{+0}$ & 1.426 \\
     7 &  66 & $8.4843\cdot 10^{-1}$ & $-2.2573\cdot 10^{+0}$ & 2.661 \\
     9 &  81 & $2.5157\cdot 10^{-1}$ & $-8.4141\cdot 10^{-1}$ & 3.345 \\
    11 &  94 & $1.3149\cdot 10^{-1}$ & $-2.8819\cdot 10^{-1}$ & 2.192 \\
    \bottomrule
  \end{tabular}\quad\;\; 
  \begin{tabular}[t]{@{}rrccr@{}}
    \toprule
    $\ell$ & $N$ & $|J_{\mathrm{ref}} - J_k|$
    & $\sum \eta_i$ & $I_{\mathrm{eff}}$ \\
    \midrule
    13 & 113 & $6.6317\cdot 10^{-2}$ & $-2.3767\cdot 10^{-1}$ & 3.584 \\
    15 & 142 & $3.6818\cdot 10^{-2}$ & $-1.8662\cdot 10^{-1}$ & 5.069 \\
    17 & 161 & $2.4414\cdot 10^{-2}$ & $-9.9402\cdot 10^{-2}$ & 4.071 \\
    19 & 178 & $1.5597\cdot 10^{-2}$ & $-7.2259\cdot 10^{-2}$ & 4.633 \\
    21 & 217 & $8.6004\cdot 10^{-3}$ & $-3.4843\cdot 10^{-2}$ & 4.051 \\
    23 & 258 & $5.1810\cdot 10^{-3}$ & $-2.3090\cdot 10^{-2}$ & 4.457 \\
    24 & 275 & $4.2982\cdot 10^{-3}$ & $-1.6962\cdot 10^{-2}$ & 3.946 \\
    \bottomrule
  \end{tabular}
\end{table}

\paragraph{Bounded effectivity.}
After the initial transient ($\ell = 0,1$), the effectivity index
remains bounded and exhibits moderate oscillations, predominantly in the range $[2,\,5]$ with an average around~$3.5$--$4$.
While occasional peaks (e.g., $I_{\mathrm{eff}}\approx 5.1$ at $\ell=15$) occur, the index does not display any systematic growth or deterioration under refinement.

Since $\mathcal{J} = \sum_i G_i^2$ is quadratic in the $G_i$ and each
$G_i$ is itself quadratic in~$x_1$, the cubic remainder in the error
identity~\eqref{eq:DWR-identity} is of the same order as the leading
term. Consequently, asymptotic exactness $I_{\mathrm{eff}} \to 1$ is not expected; instead, a bounded effectivity index is the relevant benchmark~\cite{Rannacher2003,
EndtmayerLangerWick2020}.

\paragraph{One-sided overestimation.}
Both the true error $J_{\mathrm{ref}} - J_k$ and the estimator
$\sum \eta_i$ are consistently negative, indicating that the
estimator overestimates the magnitude of the error.  This is the
conservative direction: the stopping criterion $\eta_{\mathrm{tot}}
\leq \mathit{TOL}$ will not terminate prematurely.

\subsection{Early stabilization of the marked set within the Jacobi iteration}
\label{sec:early-stabilization}

Proposition~\ref{prop:contraction} guarantees global convergence of the
Jacobi iteration, but the adaptive algorithm requires only that the
D\"orfler-marked set stabilize.  We now show numerically that this occurs
well before adjoint convergence.

We denote by~$\mathcal{M}_{\mathrm{ex}}$ the D\"orfler-marked set
(cf.\ Algorithm~\ref{alg:adaptivity}, step~10) obtained from the exact sequential
adjoint~$z_k$, and by~$\mathcal{M}^{(\ell)}$ the marked set produced by
the $\ell$-th Jacobi sweep.  We say the marked set has \emph{stabilized}
at sweep~$k^*$ if
$\mathcal{M}^{(\ell)} = \mathcal{M}_{\mathrm{ex}}$ for
$\ell = k^*, \dots, k^* + 3$ i.e., agreement persists over four consecutive sweeps. Early stabilization is explained by a complementarity
property of the DWR indicator: the quantity
$\eta_i^{(\ell)} = |\langle \mathcal{R}_i,\,\Delta z_i^{(\ell)}\rangle|$
couples the primal residual~$\mathcal{R}_i$ to the adjoint
difference~$\Delta z_i^{(\ell)}$.  In the pH-DAE setting, $\mathcal{R}_i$
is large only on the few intervals where the primal solution is
under-resolved and negligible elsewhere.  On intervals with
$\mathcal{R}_i \approx 0$ the indicator vanishes regardless of the
pointwise adjoint error, so Jacobi error confined to the low-residual tail
does not corrupt the marking.

Table~\ref{tab:jacobi} and Figure~\ref{fig:J24} provide numerical evidence
on the transmission-line benchmark~\eqref{eq:tranmission-line-parameters-convergence}.  On the
initial uniform grid ($N = 50$, $|\mathcal{M}_{\mathrm{ex}}| = 1$), a
single sweep suffices ($k^* = 1$), yielding a $50\times$ speedup: the
uniform step size produces strong contraction
($\rho \approx 0.93$) and the sole marked interval is resolved
immediately by the local right-hand side.  On the late adaptive grid
($N = 143$, $|\mathcal{M}_{\mathrm{ex}}| = 12$, after 14~DWR
refinements), $k^* = 110$ sweeps are needed and the speedup reduces
to~$1\times$.  This degradation is driven by two effects: adaptive
refinement concentrates short intervals near~$t = 0$, where the local
contraction rates satisfy $\rho(\Gamma_i) \approx 1$; and the propagation
distance from the terminal boundary to the furthest marked interval
spans nearly all $143$~intervals.

Figure~\ref{fig:J4} confirms the complementarity mechanism:
the Jacobi indicators match the exact curve 
in the marked region (left
of the $\theta$-cut) well before~$k^*$; deviations are confined to the
negligible tail.

\begin{remark}[Scaling of the sweep count]
  The ratio $k^*/|\mathcal{M}_{\mathrm{ex}}|$ grows from~$1$ to~$9.2$
  across refinement levels (Table~\ref{tab:jacobi}), so $k^*$ is not
  governed by~$|\mathcal{M}_{\mathrm{ex}}|$ alone.  Rather, it depends on
  the propagation distance~$d$ (number of intervals between the furthest
  marked interval and the terminal boundary) and the local contraction
  rates~$\rho(\Gamma_i) = (1 + k_i\,\mu_{\min})^{-1}$, which degrade on
  short intervals.  The practical benefit of the Block-Jacobi
  approximation is thus concentrated in the early and intermediate
  refinement iterations ($\ell \leq 10$), where step-size ratios are
  moderate and speedups of $2$--$6\times$ are realized.
\end{remark}

\begin{table}[htbp]
  \centering
  \caption{Jacobi sweep convergence across DWR refinement levels on the
    transmission-line benchmark
    \eqref{eq:tranmission-line-parameters-convergence}.
    $k^*$: first sweep at which
    $\mathcal{M}^{(\ell)} = \mathcal{M}_{\mathrm{ex}}$.
    Speedup~$= N/k^*$.}
  \label{tab:jacobi}
  \begin{tabular}{rrrrrr|@{\quad}rrrrrr}
    \toprule
    DWR & & & & & & DWR\\
    iter~$\ell$ & $N$ & $|\mathcal{M}_{\mathrm{ex}}|$
      & $k^*$ & Speedup & $\rho_{\mathrm{worst}}$ 
     & 
    iter~$\ell$ & $N$ & $|\mathcal{M}_{\mathrm{ex}}|$
      & $k^*$ & Speedup & $\rho_{\mathrm{worst}}$ \\
    \midrule
     0 &  50 &  1 &   1 & $50\times$ & 0.935 & 
     8 &  82 &  7 &  40 &  $2\times$ & 0.998 \\
     2 &  53 &  3 &   9 &  $6\times$ & 0.983 & 
    10 &  94 &  9 &  40 &  $2\times$ & 0.999 \\
     4 &  58 &  4 &  10 &  $6\times$ & 0.996 & 
    12 & 116 & 15 &  75 &  $2\times$ & 1.000 \\
     6 &  67 &  6 &  25 &  $3\times$ & 0.996 & 
    14 & 143 & 12 & 110 &  $1\times$ & 1.000 \\
    \bottomrule
  \end{tabular}
\end{table}

\begin{figure}[htbp]
  \centering
  \begin{subfigure}[b]{0.49\linewidth}
    \centering
    \includegraphics[width=\linewidth]{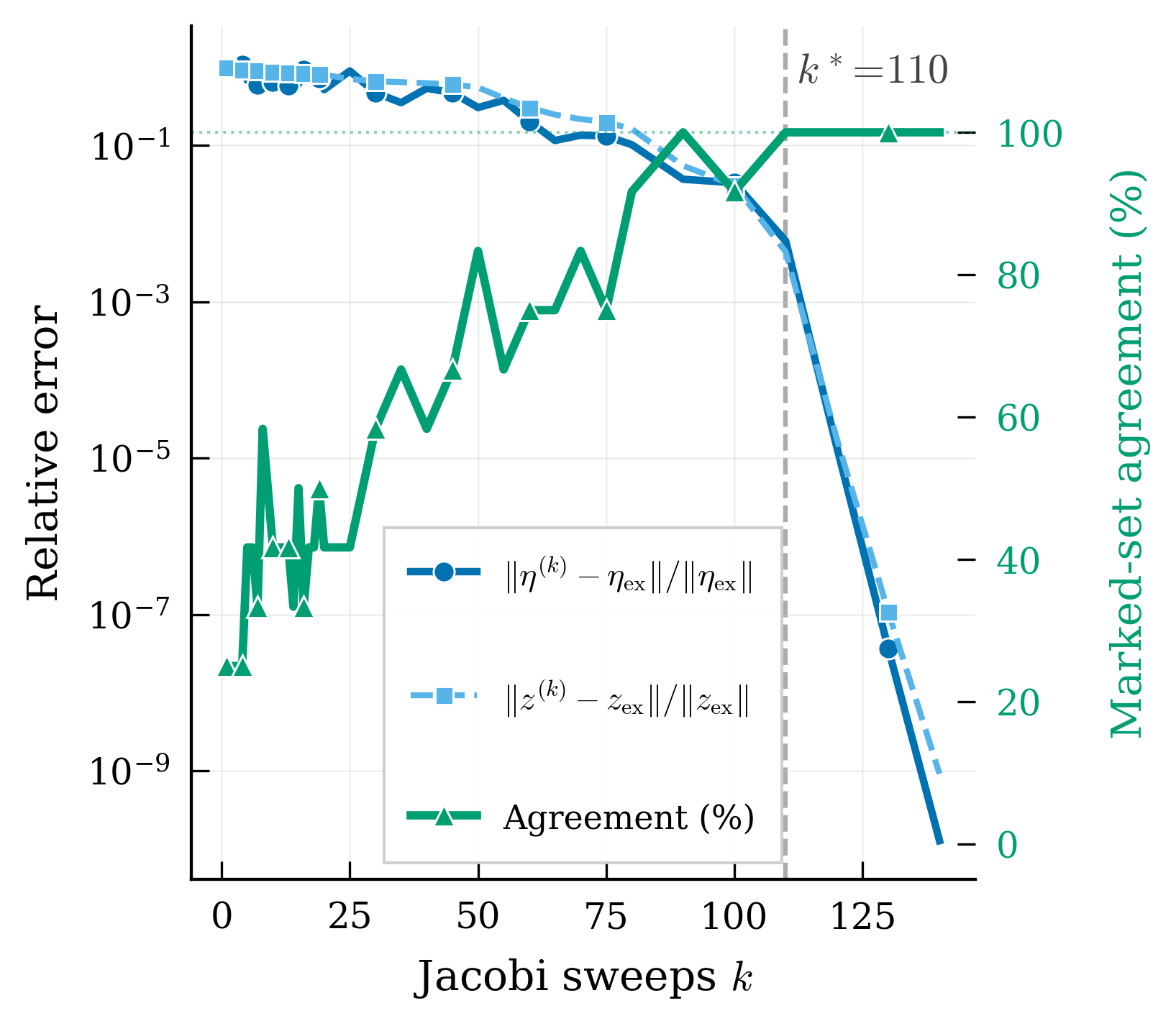}
    \caption{Error and agreement vs.\ sweeps~$k$.}
    \label{fig:J2}
  \end{subfigure}
  \hfill
  \begin{subfigure}[b]{0.49\linewidth}
    \centering
    \includegraphics[width=\linewidth]{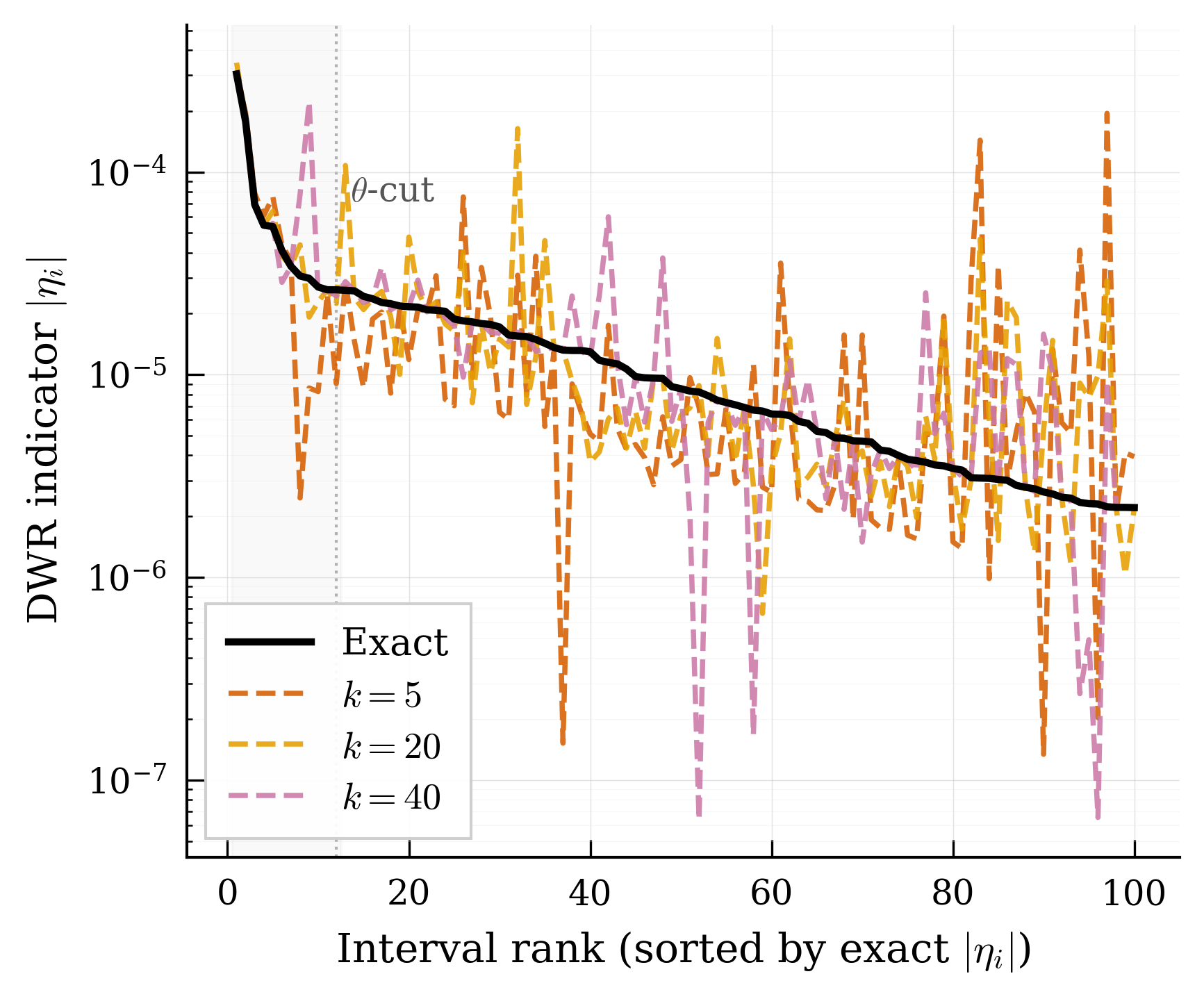}
    \caption{Ranked indicators; shaded:
      $|\mathcal{M}_{\mathrm{ex}}| = 12$.}
    \label{fig:J4}
  \end{subfigure}
  \caption{Late adaptive grid ($N = 143$,
    $|\mathcal{M}_{\mathrm{ex}}| = 12$, 14~DWR refinements).
    (\subref{fig:J2})~Relative indicator error (solid) and adjoint error
    (dashed), with marked-set agreement (right axis, green).
    Stabilization at $k^* = 110$.
    (\subref{fig:J4})~DWR indicators ranked by~$|\eta_i|$ for
    $k \in \{5, 20, 40\}$.  Jacobi approximations match the exact curve
    within the marked region well before~$k^*$.}
  \label{fig:J24}
\end{figure}

\section{Conclusion}
\label{sec:conclusions}

We have presented a goal-oriented time-adaptive scheme for linear port-Hamiltonian DAEs of index~1, combining the dual weighted residual method with the intrinsic energy-balance structure of pH systems. The approach treats the violation of the energy balance as a computable quantity of interest and adapts the time grid to reduce it, with full freedom in the choice of the underlying time integrator. The main contributions are: (i)~a Schur-complement reduction that formulates the DWR framework entirely in the differential variable; (ii)~a complete a~posteriori error representation with computable local indicators; (iii)~a parallelizable Block-Jacobi approximation of the adjoint whose contraction is guaranteed by the dissipative pH structure; and (iv)~numerical validation demonstrating savings of up to 89\% in the number of time intervals compared to uniform refinement.

For our specific quantity of interest 
$\mathcal{J}(u_k) = \sum_i G_i^2$,
the adjoint equation is driven by the right-hand side
$2G_i\,\nabla_{x_{i+1}} G_i$, such that the dual solution $z_i$ is
structurally correlated with the primal residuals $G_i$ themselves.
As a consequence, any local marking criterion proportional to $|G_i|$
identifies essentially the same dominant intervals as the global DWR
estimator $\eta_i = \langle z_i, r_i\rangle$.  The advantage of DWR
over such heuristics therefore lies not primarily in interval
selection but in the rigorous \emph{a posteriori} error representation
$\mathcal{J}(u) - \mathcal{J}(u_k) \approx \sum_i \eta_i$ and the
resulting effectivity index (Table~\ref{tab:effectivity}), which
provides certified error control unavailable to purely local methods.
For goal functionals that are not themselves residual-based---such as
a pointwise output evaluation $\mathcal{J}(u_k) = |y(t^*)|^2$
for a fixed observation time~$t^*$---the
adjoint genuinely identifies intervals where local error is small but
global sensitivity is large, and the advantage of DWR over local
strategies would be substantially larger.

The dG(0) discretization used in this work is first-order accurate, and the DWR theory extends directly to higher-order discontinuous Galerkin methods~\cite{meidner2008}. Replacing dG(0) by dG($s$) for $s \geq 1$ would yield higher-order convergence rates in both the state and the goal functional, with the adjoint computation and indicator evaluation following the same block structure derived in Sections~\ref{sec:DWR-analysis} and~\ref{sec:efficient_adjoint}. A second natural direction is the extension to nonlinear port-Hamiltonian systems, where the Hamiltonian or the structure matrices depend on the state. The Lagrangian-based error identity (Theorem~\ref{thm:error-rep}) does not require linearity of the underlying dynamics, but the stability analysis and the contraction guarantee for the Block-Jacobi iteration (Proposition~\ref{prop:contraction}) rely on the linear structure and would require appropriate generalizations. These extensions are the subject of ongoing work.
\appendix

\section{Proof of Lemma~\ref{lem:scaledtounscaled}}
\label{sec:app}
\begin{proof}
For clarity of exposition, we present the proof for a constant step size
$k_i \equiv k > 0$; the extension to variable steps is immediate, since
the contraction $P_i := (E_{11} + k_i S)^{-1} E_{11}$ satisfies
$\|P_i\| \leq 1$ uniformly in $k_i \geq 0$ (as $E_{11} + k_i S \ge E_{11}$
in the $E_{11}$-inner product), so all norm estimates in what follows hold
with $P$ replaced by $P_i$ at each step and the products
$P^{i-j+1}$ replaced by $P_i P_{i-1} \cdots P_{j+1}$, each of which is
again a contraction.

Now, let us assume $\mu >0$.
Denote $M:=E_{11}+kS$ and $M_\mu:=E_{11}+k(S+\mu E_{11})=M+k\mu E_{11}$.
From the recursions for $x_k$ and $(x_\mu)_k$ (with the same source and initial data), we have
\begin{align*}
M x_k^i - M_\mu (x_\mu)_k^i
=
E_{11}\bigl(x_k^{i-1}-(x_\mu)_k^{i-1}\bigr) 
\end{align*}
and hence
\begin{align*}
     M\bigl(x_k^i-(x_\mu)_k^i\bigr)
=
E_{11}\bigl(x_k^{i-1}-(x_\mu)_k^{i-1}\bigr)+k\mu E_{11}(x_\mu)_k^i.
\end{align*}
Consequently, it holds
\begin{align*}
x_k^i-(x_\mu)_k^i
=
M^{-1}E_{11}\bigl(x_k^{i-1}-(x_\mu)_k^{i-1}\bigr)
+
k\mu\,M^{-1}E_{11}(x_\mu)_k^i.
\end{align*}
We set $e^i:=x_k^i-(x_\mu)_k^i$ and abbreviate $P:=M^{-1}E_{11}$ which, due to dissipativity of $S$, is a contraction: $M = E_{11} + kS \ge E_{11}$, that is, $\|P\|\leq 1$. Since $x_k^0 = (x_\mu)_k^0 = x_{1,0}$, we have $e^0 = 0$. The recursion
\begin{align*}
    e^i=P\, e^{i-1}+k\mu\, P\, (x_\mu)_k^i, \qquad i = 1, \dots, N,
\end{align*}
upon repeated substitution and using $e^0 = 0$, yields the error representation
\begin{align}\label{eq:errorrep}
 e^i = k\mu \sum_{j=1}^i P^{\,i-j+1}(x_\mu)_k^j.
\end{align}

Throughout the proof, we use the parametric Young inequality
\begin{align}\label{eq:paramYoung}
  \|a + b\|^2 \leq (1+\varepsilon)\|a\|^2 + (1+\varepsilon^{-1})\|b\|^2,
  \qquad \varepsilon > 0.
\end{align}

\medskip\noindent
\emph{Step~1 (Weighted norm $\sum k\|x_k^i\|^2$).}
From~\eqref{eq:errorrep} and $\|P^{i-j+1}\| \leq 1$, we have
\begin{align}\label{eq:ei-bound}
  \|e^i\|^2
  \leq (k\mu)^2 \Bigl(\sum_{j=1}^{i}\|(x_\mu)_k^j\|\Bigr)^{\!2}
  \leq (k\mu)^2\, i \sum_{j=1}^{i}\|(x_\mu)_k^j\|^2
\end{align}
using Young's inequality for $i$ terms. 
Applying~\eqref{eq:paramYoung} to $x_k^i = (x_\mu)_k^i + e^i$ and summing gives
\begin{align}\label{eq:p2}
  \sum_{i=1}^{N} k\,\|x_k^i\|^2
  \leq (1+\varepsilon)\sum_{i=1}^{N} k\,\|(x_\mu)_k^i\|^2
     + (1+\varepsilon^{-1})\sum_{i=1}^{N} k\,\|e^i\|^2.
\end{align}
For the error sum, we substitute~\eqref{eq:ei-bound} and exchange the order of summation:
\begin{align*}
  \sum_{i=1}^{N} k\,\|e^i\|^2
  &\leq k(k\mu)^2 \sum_{i=1}^{N} i \sum_{j=1}^{i} \|(x_\mu)_k^j\|^2
  = k(k\mu)^2 \sum_{j=1}^{N} \|(x_\mu)_k^j\|^2 \sum_{i=j}^{N} i \\
  &\leq k(k\mu)^2 \cdot \frac{N^2}{2}\sum_{j=1}^{N}\|(x_\mu)_k^j\|^2
  = \frac{\mu^2 T^2}{2}\sum_{j=1}^{N} k\,\|(x_\mu)_k^j\|^2
\end{align*}
with $k N = T$.
Substituting this into~\eqref{eq:p2} yields
\begin{align}\label{eq:weightednorm}
  \sum_{i=1}^{N} k\,\|x_k^i\|^2
  \leq \Bigl(1 + \varepsilon + (1+\varepsilon^{-1})\frac{\mu^2 T^2}{2}\Bigr)
       \sum_{i=1}^{N} k\,\|(x_\mu)_k^i\|^2.
\end{align}

\medskip\noindent
\emph{Step~2 (Terminal value $\|x_k^{N}\|^2$).}
From~\eqref{eq:ei-bound} with $i = N$, we find
\begin{align*}
  \|e^{N}\|^2
  \leq (k\mu)^2 N \sum_{j=1}^{N}\|(x_\mu)_k^j\|^2
  = \mu^2 T \sum_{j=1}^{N} k\,\|(x_\mu)_k^j\|^2.
\end{align*}
Applying~\eqref{eq:paramYoung} gives
\begin{align}\label{eq:terminal}
  \|x_k^{N}\|^2
  \leq (1+\varepsilon)\|(x_\mu)_k^{N}\|^2
     + (1+\varepsilon^{-1})\mu^2 T \sum_{j=1}^{N} k\,\|(x_\mu)_k^j\|^2.
\end{align}

\medskip\noindent
\emph{Step~3 (Jump term $\sum\|x_k^{i+1} - x_k^i\|^2$).}
Define $d^i := e^{i+1} - e^i$ and $g^i := (x_\mu)_k^{i+1} - (x_\mu)_k^i$.
From the recursion $e^{i+1} = P\,e^i + k\mu\, P\,(x_\mu)_k^{i+1}$
and $e^i = P\,e^{i-1} + k\mu\, P\,(x_\mu)_k^i$, subtraction gives
\begin{align}\label{eq:d-recursion}
  d^i = P\, d^{i-1} + k\mu\, P\, g^i, \qquad i = 1, \dots, N{-}1,
\end{align}
with $d^0 = e^1 - e^0 = e^1 = k\mu\, P\,(x_\mu)_k^1$ (using $e^0 = 0$).
Since $\|P\| \leq 1$, taking norms in~\eqref{eq:d-recursion} and telescoping yields
\begin{align*}
  \|d^i\| \leq \|d^0\| + k\mu \sum_{l=1}^{i}\|g^l\|
           = k\mu\Bigl(\|(x_\mu)_k^1\| + \sum_{l=1}^{i}\|g^l\|\Bigr).
\end{align*}
Squaring via Young's inequality and summing, we find
\begin{align}\label{eq:sumd}
  \sum_{i=1}^{N-1}\|d^i\|^2
  &\leq 2(k\mu)^2 \Bigl((N{-}1)\|(x_\mu)_k^1\|^2
        + \sum_{i=1}^{N-1}i\sum_{l=1}^{i}\|g^l\|^2\Bigr) \nonumber\\
  &\leq 2(k\mu)^2 N\,\|(x_\mu)_k^1\|^2
        + (k\mu)^2 N^2 \sum_{l=1}^{N-1}\|g^l\|^2 \nonumber\\
  &= 2k T\mu^2\,\|(x_\mu)_k^1\|^2
     + \mu^2 T^2 \sum_{l=1}^{N-1}\|g^l\|^2.
\end{align}
For the first term, $k\|(x_\mu)_k^1\|^2 \leq \sum_{i=1}^{N} k\,\|(x_\mu)_k^i\|^2$, so
\begin{align}\label{eq:sumd-final}
  \sum_{i=1}^{N-1}\|d^i\|^2
  \leq 2T\mu^2 \sum_{i=1}^{N} k\,\|(x_\mu)_k^i\|^2
     + \mu^2 T^2 \sum_{l=1}^{N-1}\|g^l\|^2.
\end{align}
Applying~\eqref{eq:paramYoung} to $x_k^{i+1} - x_k^i = g^i + d^i$, summing over $i = 1, \dots, N{-}1$, and substituting~\eqref{eq:sumd-final}:
\begin{align}\label{eq:jumpbound-inner}
  \sum_{i=1}^{N-1}\|x_k^{i+1} - x_k^i\|^2
  &\leq (1 + \varepsilon + (1{+}\varepsilon^{-1})\mu^2 T^2)
        \sum_{i=1}^{N-1}\|g^i\|^2
  + (1{+}\varepsilon^{-1})\,2T\mu^2
        \sum_{i=1}^{N} k\,\|(x_\mu)_k^i\|^2.
\end{align}

\medskip\noindent
\emph{Step~4 (Assembly).}
Combining~\eqref{eq:weightednorm}, \eqref{eq:terminal}, and~\eqref{eq:jumpbound-inner}, each term on the left-hand side of~\eqref{eq:integralestimate} is bounded by a constant times the corresponding terms on the right-hand side.
Choosing $\varepsilon = \mu T$ and collecting, the overall constant satisfies
\begin{align*}
  c(\mu,T)
  = 1+ \mu T + \max \left\{ \left( 1 \!+\! \tfrac{1}{\mu T}\right) \tfrac{\mu^2 T^2}{2},\, \left(1 \!+\! \tfrac{1}{\mu T} \right) 2  \mu^2 T \right\} 
  = 1+\mu T + \mu\max\left\{ \mu T +  \tfrac{1}{2}\mu T^2,\;  2\mu T +2 \right\}\!.
\end{align*}
Hence, $c(\mu,T) \xrightarrow{\mu \to 0} 1$. Finally, the estimate trivially holds for $\mu=0$.
\end{proof}

\bibliographystyle{plainnat}
\bibliography{references}
\end{document}